\renewcommand{\le}{\leqslant}
\renewcommand{\ge}{\geqslant}
\newcommand{\ddd}{\,\mathrm{d}}
\newcommand{\lin}{\mathrm{Lin}\,}
\newcommand{\degin}{\mathrm{deg}\,\mathrm{in}\,}
\newcommand{\degout}{\mathrm{deg}\,\mathrm{out}\,}
\renewcommand{\mod}{\,\mathrm{mod}\,}
\newcommand{\rank}{\mathrm{rank}\,}
\newtheorem{theorem}{Theorem}[section]
\newtheorem{lemma}[theorem]{Lemma}
\newtheorem{observation}[theorem]{Observation}
\newtheorem{corollary}[theorem]{Corollary}
\theoremstyle{definition}
\newtheorem{example}[theorem]{Example}
\setlist[enumerate,1]{label=(\roman*)}
\numberwithin{equation}{section}
\begin{document}
	\title[m-isometric composition operators]{$ m $-isometric composition operators on discrete spaces}
	\author{Michał Buchała\,\orcidlink{0000-0001-5272-9600}}
	\address{Doctoral School of Exact and Natural Sciences, Jagiellonian University, Łojasiewicza 11, PL-30348 Kraków, Poland}
	\address{Institute of Mathematics, Jagiellonian University, Łojasiewicza 6, PL-30348 Kraków, Poland}
	\email{michal.buchala@im.uj.edu.pl}

	\subjclass{Primary 47B33; Secondary 47B20, 47B37}
	\keywords{weighted shifts on graphs, m-isometries, composition operators, completely hyperexpansive operators, Cauchy dual subnormality problem}

	\begin{abstract}
		In this paper we study composition operators on discrete spaces $ L^{2}(\mu) $. We establish the classification of underlying graphs of such operators. We obtain the characterization of $ m $-isometric composition operators such that the underlying graph has one cycle. The paper contains also the solution of the Cauchy dual subnormality problem in the class of such composition operators. 
	\end{abstract}

   	\maketitle

	\section{Introduction}
The composition operators form the important class of operators. The study of such a class of operators is motivated by the Banach-Stone theorem (see \cite{banachTheorie}), which states that a linear surjective isometry between spaces of continuous functions on compact spaces is in fact a weighted composition operators. The composition operators appear also naturally in ergodic theory (so-called Koopman operator, which is a composition operator induced by measure-preserving transformation). The class of composition operators has been studied extensively for a long time (see e.g. \cite{nordgrenCompositionOperatorsHilbert1978}, \cite{signhCompositionVectorFunctions}). In \cite{budzynskiSubnormalityUnboundedComposition2017} Budzyński et.al. presented an example of unbounded non-hyponormal composition operators having graph with one cycle, which generates the Stieltjes moment sequence. In \cite{pietryckiShimorinTypeAnalyticModel} Pietrzycki obtained an analytic model for a weighted composition operator on discrete space under the assumption that the underlying graph of this operator has finite branching index.\par
$ m $-isometric operators were introduced by Agler in \cite{aglerDisconjugacyTheoremToeplitz1990} and investigated in details in \cite{aglerMisometricTransformationsHilbert1995} (as well as its subsequent parts). From that time on there have been published many papers devoted to study properties of $ m $-isometries (see e.g. \cite{bermudezProductsMisometries2013}, \cite{badeaCauchyDual2isometric2019}, \cite{suciuOperatorsExpansiveMisometric2022}, \cite{jablonskiMisometricOperatorsTheir2020}, \cite{koMisometricToeplitzOperators2018}). In  \cite{abdullahStructureMisometricWeighted2016} and \cite{bermudezWeightedShiftsWhich2010} $ m $-isometric weighted shifts were studied and, eventually, the characterization of weights of $ m $-isometric shift by real monic polynomials of degree at most $ m-1 $ was obtained (see \cite[Theorem 1]{abdullahStructureMisometricWeighted2016}). In \cite{kosmiderMisometricCompositionOperators2021} Jabłoński and Kośmider investigated $ m $-isometricity of composition operator on discrete space such that the graph associated to this operator has one cycle and one branching point.\par
The Cauchy dual $ T' = T(T^{\ast}T)^{-1} $ of a left-invertible operator $ T\in \mathbf{B}(H) $ was introduced by Shimorin  in \cite{shimorinWoldtypeDecompositionsWandering2001} on the occasion of his study of Wold-type decompositions. From \cite[Proposition 6]{athavaleCompletelyHyperexpansiveOperators1996} it follows that the Cauchy dual of a completely hyperexpansive weighted shift is always a subnormal contraction. In \cite{chavanOperatorsCauchyDual2007} there was posed a question whether it is true for every completely hyperexpansive operator. In \cite{anandSolutionCauchyDual2019} Chavan et. al. were considering this problem in the more restricted class of 2-isometries and obtained a negative answer. They also provided several sufficient conditions for 2-isometry to have subnormal Cauchy dual. The Cauchy dual subnormality problem was studied also in \cite{anandCauchyDualSubnormality2020} and \cite{chavanCauchyDualSubnormality2022}.\par
The present paper is devoted to study $ m $-isometric composition operators on discrete spaces in a more general setting than in \cite{kosmiderMisometricCompositionOperators2021}. It is organized as follows. In Section \ref{SecPreliminaries} we introduce the notation and recall basic definitions and some standard results needed in further parts of the paper. The aim of Section \ref{SecClassificationOfCompOperators} is to fully classify self-maps of a countable set. If we associate a certain graph to such a map, it turns out that if this graph is assumed to be connected, then there are only two posibilities for its geometry: either this graph is a rootless directed tree or it consists of rooted directed trees connected by a cycle (see Theorem \ref{ThmClassificationOfGraphs}). In Section \ref{SecMIsometricCompOperators} we investigate the composition operators (viewed as certain weighted shifts) connected to graphs of the second type, that is, the ones having a cycle. We generalize the results presented in \cite{kosmiderMisometricCompositionOperators2021}, where the very particular case of graph with a cycle and the only one branching point was studied. The assumption that the graph has a cycle allows us to obtain certain recursive formulas (see Lemma \ref{LemRecursiveFormulas}), which are key tools in further considerations. The main result of Section \ref{SecMIsometricCompOperators} is Theorem \ref{ThmMIsometricCompositionOperatorsOneCycle}, which gives us the characterization of $ m $-isometricity in terms of the rank of certain matrices. We also prove that the notions of complete hyperexpansivity and 2-isometricity coincide in the class of composition operators having graphs with one cycle. Section \ref{SecCauchyDualOfCompOperator} is devoted to study the Cauchy dual subnormality problem for 2-isometric composition operators on graphs with one cycle. In Theorem \ref{ThmCauchyDualSubnormalityCondition} the surprising characterization of composition operators having subnormal Cauchy dual is presented. It turns out that if the Cauchy dual is subnormal, then for every vertex $ v $ lying on the cycle the measure representing the sequence $ (\lVert T^{n}e_{v}\rVert^{2})_{n=0}^{\infty} $ is two-atomic. Using this result, we provide an example of a composition operator on graph with one cycle, for which the Cauchy dual is not subnormal.
	\section{Preliminaries}
\label{SecPreliminaries}
Denote by $ \mathbb{N} $ and $\mathbb{Z} $ the set of non-negative integers and integers, respectively and by $ \mathbb{R} $ and $ \mathbb{C} $ the field of real and complex numbers, respectively. For $ p \in \mathbb{N} $ we set
\begin{equation*}
	\mathbb{N}_{p} = \{n\in \mathbb{N}\!: n\ge p \}.
\end{equation*}
If $ \mathbb{K}\in \{\mathbb{R},\mathbb{C} \} $, then $ \mathbb{K}_{n}[x] $ stands for the set of all polynomials of degree at most $ n $ with coefficients in $ \mathbb{K} $. From the binomial formula we can derive the following simple fact.
\begin{observation}
	\label{ObsLoweringDegree}
	If $ p\in \mathbb{K}_{n}[x] $ for some $ n\in \mathbb{N}_{1} $ and $ \kappa\in \mathbb{K} $, then
	\begin{equation*}
		p(\,\cdot\,)-p(\,\cdot\,+\kappa) \in \mathbb{K}_{n-1}[x].
	\end{equation*}
\end{observation}
The next lemma will play a crucial role in our considerations.
\begin{lemma}
	\label{LemSeriesOfPolynomials}
	Let $ m\in \mathbb{N}_{1} $. Suppose $ (p_{i})_{i=0}^{\infty}\subset \mathbb{K}_{m}[x] $ and for every $ i\in\mathbb{N} $ write $ p_{i}(x) = \sum_{k=0}^{m}a_{k}^{(i)}x^{k} $. Assume that for every $ k\in\mathbb{N}\cap[0,m] $ the series
	\begin{equation*}
		b_{k}:= \sum_{i=0}^{\infty} a_{k}^{(i)}
	\end{equation*}
	is absolutely convergent. Then the series
	\begin{equation}
		\label{FormSeriesOfPolynomials}
		p(x) := \sum_{i=0}^{\infty} p_{i}(x)
	\end{equation}
	is absolutely convergent for every $ x\in \mathbb{K} $ and $ p\in \mathbb{K}_{m}[x] $. Moreover,
	\begin{equation*}
		p(x) = \sum_{i=0}^{m} b_{k}x^{k}, \qquad x\in \mathbb{R}.
	\end{equation*}
\end{lemma}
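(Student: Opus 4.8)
The plan is to exploit the fact that every $p_i$ has degree at most $m$, so that summing the polynomials reduces to summing only the finitely many coefficient series $b_0,\dots,b_m$. First I would establish absolute convergence by fixing $x\in\mathbb{K}$ and applying the triangle inequality, using that the index $k$ ranges over the finite set $\mathbb{N}\cap[0,m]$:
\begin{equation*}
	\sum_{i=0}^{\infty}|p_{i}(x)| \le \sum_{i=0}^{\infty}\sum_{k=0}^{m}|a_{k}^{(i)}|\,|x|^{k} = \sum_{k=0}^{m}|x|^{k}\sum_{i=0}^{\infty}|a_{k}^{(i)}|,
\end{equation*}
where interchanging the two sums is legitimate because the outer one is finite. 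Each inner series $\sum_{i=0}^{\infty}|a_{k}^{(i)}|$ is finite by the assumed absolute convergence of $b_{k}$, and there are only $m+1$ terms in the sum over $k$, so the right-hand side is finite. This proves that the series in \eqref{FormSeriesOfPolynomials} converges absolutely for every $x\in\mathbb{K}$.

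Next I would identify the limit by working with the $N$-th partial sum and swapping the finite sum over $k$ against the finite sum over $i$:
\begin{equation*}
	\sum_{i=0}^{N}p_{i}(x) = \sum_{k=0}^{m}\Bigl(\sum_{i=0}^{N}a_{k}^{(i)}\Bigr)x^{k}.
\end{equation*}
Letting $N\to\infty$, each inner partial sum $\sum_{i=0}^{N}a_{k}^{(i)}$ tends to $b_{k}$, and since only finitely many values of $k$ occur, the whole expression converges to $\sum_{k=0}^{m}b_{k}x^{k}$. Hence $p(x)=\sum_{k=0}^{m}b_{k}x^{k}$, which is visibly a polynomial of degree at most $m$, so $p\in\mathbb{K}_{m}[x]$.

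I expect no genuine obstacle here: the decisive point is that the uniform degree bound $m$ makes the inner sum over $k$ finite, so every rearrangement above is an elementary manipulation of finite sums rather than an appeal to Fubini's theorem for double series. The only care required is to keep the quantifier over $x$ explicit throughout and to observe that the identity $p(x)=\sum_{k=0}^{m}b_{k}x^{k}$ holds for all $x\in\mathbb{K}$, which is precisely what certifies membership in $\mathbb{K}_{m}[x]$.
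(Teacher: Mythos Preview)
Your proof is correct and follows essentially the same approach as the paper's: bound the partial sums via the triangle inequality and the finite sum over $k$, then identify the limit by interchanging the finite sum over $k$ with the partial sum over $i$ and passing to the limit. The only cosmetic difference is that the paper names the target polynomial $q(x)=\sum_{k=0}^{m}b_kx^k$ first and then shows $p=q$, whereas you compute the limit directly.
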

\begin{proof}
	If $ x\in \mathbb{K} $ and $ n\in \mathbb{N} $, then
	\begin{align*}
		\sum_{i=0}^{n} \lvert p_{i}(x)\rvert \le \sum_{k=0}^{m} \lvert x\rvert ^{k}\sum_{i=0}^{n} \lvert a_{k}^{(i)}\rvert.
	\end{align*}
	Hence, the series \eqref{FormSeriesOfPolynomials} is absolutely convergent. Set $ q(x) = \sum_{k=0}^{m} b_{k}x^{k} $, $ x\in \mathbb{K} $. For every $ x\in \mathbb{K} $ we have
	\begin{equation*}
		\sum_{i=0}^{n} p_{i}(x) = \sum_{k=0}^{m} x^{k}\sum_{i=0}^{n} a_{k}^{(i)} \stackrel{n\to\infty}{\longrightarrow} \sum_{k=0}^{m}b_{k}x^{k} = q(x).
	\end{equation*}
	Thus, $ p(x) = q(x) $ for $ x\in \mathbb{K} $. Consequently, $ p\in \mathbb{K}_{m}[x] $.
\end{proof}
For a topological space $ X $ we denote by $ \mathcal{B}(X) $ the $ \sigma $-algebra of Borel subsets of $ X $; if $ x\in X $, then by $ \delta_{x} $ we denote the Dirac delta measure. For $ A\subset X $, $ \chi_{A} $ stands for characteristic function of $ A $; we abbreviate $ \chi_{x} = \chi_{\left\{ x \right\}} $ for $ x\in X $. If $ H $ is a complex Hilbert space, then $ \mathbf{B}(H) $ stands for the $ C^{\ast} $-algebra of all linear and bounded operators on $ H $.\par
By a discrete measure space we mean a pair $ (X,\mu) $, where $ X $ is an at most countable set and $ \mu\!: 2^{X}\to [0,\infty] $ is a positive measure on $ X $ such that $ \mu(\{x\}) \in (0,\infty) $ for every $ x\in X $; for simplicity we write $ \mu(x) := \mu(\{x\}) $ for $ x\in X $. If $ (X,\mu) $ is a discrete measure space and $ T\!: X\to X $ is a transformation on $ X $, then $ \mu\circ T^{-1} $ is absolutely continuous with respect to $ \mu $, where $ \mu\circ T^{-1}\!: 2^{X}\to [0,\infty] $ is a positive measure on $ X $ defined by
\begin{equation*}
	\mu\circ T^{-1}(A) = \mu(T^{-1}(A)), \qquad A\subset X.
\end{equation*}
Denote by $ h_{T} $ the Radon-Nikodym derivative of $ \mu\circ T^{-1} $ with respect to $ \mu $; we can easily compute that
\begin{equation}
	\label{FormRadonNikodymDerivativeDiscrete}
	h_{T}(x) = \frac{\mu(T^{-1}(\{x\}))}{\mu(x)}, \qquad x\in X.
\end{equation}
Assuming additionally that $ h_{T}\in L^{\infty}(\mu) $, we define the composition operator $ C_{T}\in \mathbf{B}(L^{2}(\mu)) $ as follows:
\begin{equation*}
	C_{T}f = f\circ T, \qquad f\in L^{2}(\mu).
\end{equation*}
For more details about general theory of composition operators we refer to \cite{nordgrenCompositionOperatorsHilbert1978}.\par
We say that $ S\in \mathbf{B}(H) $ is normal if $ S^{\ast}S = SS^{\ast} $; $ S $ is called subnormal if there exists a Hilbert space $ K $ containing (an isometric embedding of) $ H $ and a normal operator $ N\in \mathbf{B}(K) $ such that $ N|_{H} = S $. Recall that $ S\in \mathbf{B}(H) $ is completely hyperexpansive if
\begin{equation*}
    \sum_{k=0}^{n}(-1)^{k}\binom{n}{k} S^{\ast k}S^{k} \le 0, \qquad n\in\mathbb{N}_{1}.
\end{equation*}
We say that $ S\in \mathbf{B}(H) $ is $ m $-isometric ($ m\in\mathbb{N}_{1} $), if
\begin{equation*}
	\sum_{k=0}^{m} (-1)^{m-k}\binom{m}{k} S^{\ast k}S^{k} = 0.
\end{equation*}
For an at most countable set $ V $ we define
\begin{equation*}
	\ell^{2}(V) = \left \{f\!: V\to V | \sum_{v\in V}\lvert f(v)\rvert^{2}<\infty \right \};
\end{equation*}
this is a Hilbert space with the norm given by:
\begin{equation*}
	\lVert f\rVert = \sqrt{\sum_{v\in V}\lvert f(v)\rvert^{2}}, \qquad f\in \ell^{2}(V).
\end{equation*}
The vectors $ e_{v} $ ($ v\in V $) defined as follows:
\begin{equation*}
	e_{v}(u) = \begin{cases}
		1, & \text{ if } u = v\\
		0, & \text{ otherwise}
	\end{cases},
\end{equation*}
form an orthonormal basis of $ \ell^{2}(V) $.\par
In the subsequent part of this section we introduce some basic notions of graph theory (for more details see \cite{oreTheoryGraphs1962} and \cite{ahoDataStructuresAlgorithms1983}). By a directed graph we mean a pair $ G = (V,E) $, where $ V $ is a non-empty at most countable set and $ E\subset V\times V $; elements of $ V $ are called vertices and elements of $ E $ are called edges. Every pair $ G_{1} = (V_{1},E_{1}) $ such that $ V_{1}\subset V $ and $ E_{1}\subset (V_{1}\times V_{1})\cap E $ is called a subgraph of $ G $. For a vertex $ v\in V $ we define its in-degree, out-degree and degree as follows:
\begin{align*}
	\degin v &= \#\{w\in V\!:(w,v)\in E \}\\
	\degout v &= \#\{w\in V\!: (v,w)\in E \}\\
	\deg v &= \degin v + \degout v.
\end{align*}
If $ v,w\in V $, then a path from $ v $ to $ w $ in $ G $ is a sequence $ (v_{0},\ldots,v_{k}) \subset V $ such that $ v_{0} = v $, $ v_{k} = w $ and $ (v_{i},v_{i+1})\in E $ for every $ i\in\mathbb{N}\cap [0,k-1] $. A path $ (v_{0},\ldots,v_{k}) $ is called a cycle if $ v_{0} = v_{k} $; the cycle is said to be simple if the vertices $ v_{j} $ ($ j\in \mathbb{N}\cap[0,k-1] $) are distinct. \par
Graph $ G $ is said to be connected if for every two distinct vertices $ v,w\in V $ there exists a sequence $ (v_{0},\ldots,v_{k}) \subset V $ such that $ v_{0} = v $, $ v_{k} = w $ and $ \{v_{i},v_{i+1}\}\in \tilde{E} $ for $ i\in \mathbb{N}\cap[0,k-1] $, where 
\begin{equation*}
	\tilde{E} = \{\{v,w\}\subset V\!: (v,w)\in E \text{ or } (w,v)\in E \}.
\end{equation*} If $ G = (V,E) $ is a directed graph, then its subgraph $ G_{1} = (V_{1}, E_{1}) $ is called a connected component if $ G_{1} $ is connected and for each connected subgraph $ G_{1}'=(V_{1}',E_{1}') $ of $ G $ satisfying $ V_{1}\subset V_{1}' $ and $ E_{1}\subset E_{1}' $ we have $ G_{1} = G_{1}' $. In other words, connected components of $ G $ are maximal connected subgraphs of $ G $. \par
A connected graph $ G = (V,E) $ is called a directed tree if $ G $ has no cycles and for every $ v\in V $ we have $ \degin v \le 1 $. If $ G $ is a directed tree, then there is at most one vertex $ \omega\in V $ such that $ \degin \omega = 0 $; if it exists, we call this vertex the root of $ G $ (see \cite[Proposition 2.1.1]{jablonskiWeightedShiftsOnDirectedTrees}). The reader can verify that for rooted directed trees the following induction principle holds.
\begin{lemma}
	\label{LemInductionOnTree}
	Let $ G = (V,E) $ be a directed tree with root $ \omega $. Let $ V'\subset V $ satisfy the following conditions:
	\begin{enumerate}
		\item $ \omega\in V' $,
		\item for every $ v\in V' $ we have $ \{u\in V\!: (v,u)\in E\} \subset V' $.
	\end{enumerate}
	Then $ V' = V $.
\end{lemma}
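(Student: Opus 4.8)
The plan is to derive the statement from the basic reachability property of rooted directed trees: every vertex of $G$ is the endpoint of some \emph{directed} path starting at the root $\omega$. Granting this, I would finish by a one-line induction along such a path. Fix $v\in V$ and pick a directed path $(v_{0},\ldots,v_{k})$ with $v_{0}=\omega$ and $v_{k}=v$. I claim $v_{i}\in V'$ for every $i\in\mathbb{N}\cap[0,k]$, by induction on $i$: the base case $v_{0}=\omega\in V'$ is hypothesis (i), and if $v_{i}\in V'$ then $(v_{i},v_{i+1})\in E$ places $v_{i+1}$ in $\{u\in V\!:(v_{i},u)\in E\}\subset V'$ by hypothesis (ii). Hence $v=v_{k}\in V'$, and as $v$ is arbitrary, $V'=V$.

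It remains to establish reachability, which is the heart of the matter (it can also be extracted from \cite[Proposition 2.1.1]{jablonskiWeightedShiftsOnDirectedTrees}, but I would give a direct argument). Let $R\subset V$ be the set of vertices $w$ admitting a directed path from $\omega$ to $w$. Clearly $\omega\in R$, and $R$ is closed under successors: if $w\in R$ and $(w,u)\in E$, appending the edge $(w,u)$ to a directed path $\omega\to\cdots\to w$ yields one ending at $u$, so $u\in R$. The goal is to show $R=V$, and I would argue by contradiction, assuming some $v\in V\setminus R$ exists.

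The main obstacle is exactly this last step, where connectedness must be turned into \emph{directed} reachability. Using connectedness, choose an undirected path $\omega=u_{0},u_{1},\ldots,u_{n}=v$ (so for each index either $(u_{j},u_{j+1})\in E$ or $(u_{j+1},u_{j})\in E$), and let $j$ be the least index with $u_{j}\in R$ and $u_{j+1}\notin R$. If the joining edge is $(u_{j},u_{j+1})$, successor-closedness of $R$ gives $u_{j+1}\in R$, a contradiction. So the edge must be $(u_{j+1},u_{j})$; then $u_{j}$ has an incoming edge, whence $u_{j}\ne\omega$ because $\degin \omega=0$. Since $u_{j}\in R\setminus\{\omega\}$, a directed path from $\omega$ to $u_{j}$ has length $\ge 1$, so its penultimate vertex $w$ satisfies $(w,u_{j})\in E$ and $w\in R$. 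But $\degin u_{j}\le 1$, so the in-neighbour of $u_{j}$ is unique, forcing $w=u_{j+1}$ and hence $u_{j+1}\in R$, contradicting $u_{j+1}\notin R$. This is precisely the point where uniqueness of the in-neighbour (from $\degin \le 1$) together with $\degin \omega=0$ does the work; it forces $R=V$, completing the proof.
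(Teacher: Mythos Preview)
Your proof is correct. The paper does not actually supply a proof of this lemma---it is introduced with the remark that ``the reader can verify that for rooted directed trees the following induction principle holds''---so there is no argument to compare against; your write-up fills this gap by establishing the standard reachability property (every vertex lies at the end of a directed path from $\omega$) via the expected interplay of connectedness, $\degin\omega=0$, and uniqueness of in-neighbours, and then inducting along such a path.
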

We say that a graph $ G $ is strongly connected if for any two distinct vertices $ v,w\in V $ there is a path from $ v $ to $ w $ \textbf{and} a path from $ w $ to $ v $ (in other words, $ v $ and $ w $ lie on a common cycle). For a directed graph $ G = (V,E) $ we define a relation $ \mathcal{SC}_{G}\subset V\times V $ in the following way: $ (v,w)\in \mathcal{SC}_{G} $ if and only if either $ v= w $ or there exists a path from $ v $ to $ w $ in $ G $ and a path from $ w $ to $ v $ in $ G $. It is a matter of routine to verify that $ \mathcal{SC}_{G} $ is an equivalence relation on $ V $. Each equivalence class $ C\subset V $ of $ \mathcal{SC}_{G} $ induces a subgraph $ G_{C} = (C,E_{C}) $, where $ E_{C} = E\cap (C\times C) $, which is called a strongly connected component of $ G $.
	\section{Classification of graphs of self-maps of countable sets}
\label{SecClassificationOfCompOperators}

In \cite[Lemma 4.3.1]{jablonskiNonhyponormalOperatorGenerating2012} there was proved that weighted shifts on rootless directed trees with positive weights are unitarily equivalent to composition operators on certain discrete spaces (see also \cite[Lemma 3.1.4]{budzynskiSubnormalityUnboundedComposition2017}). In \cite[Theorem 3.2.1]{budzynskiSubnormalityUnboundedComposition2017} the authors classified composition operators on discrete spaces, for which the associated graph is connected and has at most one vertex of out-degree greater than 1. In \cite[Proposition 2.4]{stochel2WeightedQuasishifts1990} composition operators on graphs with the property that every vertex has out-degree 1 are characterized. In this section, using graph-theoretical approach, we give the full classification of graphs of self-maps of at most countable set. This classification was mentioned in \cite[p.898]{pietryckiShimorinTypeAnalyticModel}; however, the proof was not provided there.\par
Suppose $ X $ is an at most countable set and $ T\!: X\to X $ is a transformation on X; for $ x\in X $ we abbreviate: $ Tx = T(x) $. Consider the graph $ G_{T} = (V_{T},E_{T}) $, where $ V_{T} = X $ and $ E_{T} = T^{-1} = \left\{ (Tx,x)\!: x\in X \right\} $. In the next few results we investigate the geometry of graph $ G_{T} $. 
\begin{observation}
	\label{ObsIndegreeOfGraphUniquePathWithDistinctVertices}
	Suppose $ X $ is an at most countable set and $ T\!: X\to X $. Then, for every $ v\in V_{T} $ we have $ \degin v = 1 $. Moreover, if for $ v,w\in V_{T} $, $ v\not= w $,  there exists a path from $ v $ to $ w $, then there is the unique path from $ v $ to $ w $ consisting of distinct vertices.
\end{observation}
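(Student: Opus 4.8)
The plan is to deduce everything from the single structural fact that in $G_T$ the only edge pointing into a vertex $v$ comes from $Tv$. For the in-degree claim I would simply unwind the definition $E_T = \{(Tx,x)\!: x\in X\}$: a pair $(w,v)$ lies in $E_T$ precisely when $(w,v) = (Tx,x)$ for some $x$, which forces $x = v$ and $w = Tv$. Hence the unique edge into $v$ is $(Tv,v)$, so $\degin v = 1$ for every $v\in V_T$.

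The same computation yields the key description of paths, and this is the step I would set up carefully. If $(v_0,\ldots,v_k)$ is a path, then each edge $(v_i,v_{i+1})\in E_T$ forces $v_i = T v_{i+1}$; iterating gives $v_i = T^{k-i}v_k$ for all $i$. In particular a path of length $k$ from $v$ to $w$ exists if and only if $v = T^{k}w$, and whenever it exists it is given by the explicit formula $v_i = T^{k-i}w$. Thus a path from $v$ to $w$ is completely determined by its length, and the only remaining thing to control is that length.

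For the existence of a path with distinct vertices I would take the path of minimal length. Given that some path from $v$ to $w$ exists, let $k$ be the least integer with $v = T^{k}w$ (necessarily $k\ge 1$, since $v\neq w$), and consider $v_i = T^{k-i}w$. If two of its vertices coincided, say $T^{a}w = T^{b}w$ with $0\le a < b\le k$, then applying $T^{k-b}$ would give $v = T^{k}w = T^{k-(b-a)}w$ with $k-(b-a)<k$, contradicting minimality. Hence the minimal path has distinct vertices.

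Uniqueness is where I expect the only genuine subtlety. Suppose, after relabelling, that there are two paths with distinct vertices, of lengths $k\le l$; they force $v = T^{k}w$ and $v = T^{l}w$, so $T^{k}w = T^{l}w$. But the length-$l$ path lists the vertices $w, Tw, \ldots, T^{l}w$, which must be pairwise distinct, and since $0\le k\le l$ the equality $T^{k}w = T^{l}w$ then forces $k = l$; the explicit formula $v_i = T^{k-i}w$ makes the two paths literally identical. The potential obstacle, hidden in this last step, is the existence of periodic points of $T$ (exactly what makes the cyclic case interesting later in the paper): a priori the equation $v = T^{k}w$ may hold for several $k$, and the distinctness argument above is precisely what rules out more than one distinct-vertex path in that situation.
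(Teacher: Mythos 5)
Your proof is correct and follows essentially the same route as the paper: the in-degree-one property means every path is determined by its terminal vertex and its length (your formula $v_i = T^{k-i}w$ is exactly the paper's backward induction $v_{n-k}=u_{m-k}$), and the pairwise distinctness of the longer path's vertices then forces the two lengths to coincide. The only difference is cosmetic — you phrase things via iterates of $T$ and also spell out the existence of a distinct-vertex path via a minimal-length argument, which the paper dismisses as obvious.
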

\begin{proof}
	The first part can be easily derived from the definition of $ G_T $. Let us prove the ''moreover'' part. Obviously, we can always find a path from $ v $ to $ w $ consisting of distinct vertices, so we only have to prove uniqueness. Let $ v = v_0,v_1,\ldots,v_{n},v_{n+1} = w $ and $ v = u_{0},u_1,u_2,\ldots,u_{m}, u_{m+1} = w $ be paths connecting $ v $ and $ w $ such that $ v_{j}\not= v_{k} $ for every $ j,k\in\mathbb{N}\cap [0,n+1] $, $ j\not= k $ and $ u_{j}\not= u_{k} $, $ j,k\in\mathbb{N}\cap [0,m+1] $ for every $ j\not= k $; without loss of generality we can assume $ n\le m $. Since $ \degin w = 1 $, we have that $ v_{n} = u_{m} $. Proceeding by induction, we can show that $ v_{n-k} = u_{m-k} $ for all $ k\in \mathbb{N}\cap[0,n] $. In particular, we have $ v_{0} = u_{m-n} $. This, by the choice of $ u_{0},\ldots,u_{m} $, implies that $ m = n $ and $ v_{j} = u_{j} $ for all $ j\in \mathbb{N}\cap[0,n] $.
\end{proof}
The proceeding results reveal the structure of strongly connected components of $ G_T $. Lemma \ref{LemUniqueSCC} shows that every connected component of $ G_T $ may contain at most one non-trivial strongly connected component.
\begin{lemma}
	\label{LemUniqueSCC}
	Suppose $ X $ is an at most countable set and $ T\!: X\to X $ is such that $ G_T $ is connected. Then $ G_{T} $ contains at most one strongly connected component $ G_{0} $ with non-empty set of edges (that is, $ G_{0} \not= (\{v\},\varnothing) $, where $ v\in V_{T} $).
\end{lemma}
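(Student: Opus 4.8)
The plan is to argue by contradiction: suppose $ G_{T} $ has two distinct non-trivial strongly connected components and force a single vertex to lie in both. First I would record the dictionary between the graph and the dynamics of $ T $. A directed path $ v_{0},\ldots,v_{k} $ in $ G_{T} $ requires $ (v_{i},v_{i+1})\in E_{T} $ for all $ i $, which by the definition $ E_{T}=\{(Tx,x)\colon x\in X\} $ means $ v_{i}=Tv_{i+1} $; chaining these gives $ v_{0}=T^{k}v_{k} $. Hence a directed path from $ u $ to $ w $ exists exactly when $ u=T^{k}w $ for some $ k\ge 0 $, and a vertex lies on a cycle exactly when it is a periodic point of $ T $. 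In particular a non-trivial strongly connected component is precisely the (finite) forward $ T $-orbit of a periodic point, and each periodic point lies on exactly one such orbit. So it suffices to take periodic points $ a,b $ lying on cycles $ C_{0},C_{1} $ and to show $ C_{0}=C_{1} $.

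Since $ G_{T} $ is connected, I would fix a simple undirected path $ a=w_{0},w_{1},\ldots,w_{n}=b $ with distinct vertices (extracted from the connecting walk provided by the definition of connectedness), so that each consecutive pair $ \{w_{i-1},w_{i}\} $ carries an edge of $ G_{T} $ in one of the two directions. The key structural step exploits Observation~\ref{ObsIndegreeOfGraphUniquePathWithDistinctVertices}: since $ \degin v=1 $ for every $ v $, no internal vertex $ w_{i} $ can receive both of its path-edges as in-edges, for that would furnish $ w_{i} $ with two distinct in-neighbours $ w_{i-1}\ne w_{i+1} $. Reading the orientations of the edges along the path, this forbids the pattern ``along-the-path immediately followed by against-the-path''; consequently all against-path edges precede all along-path edges, and the path possesses a single turning vertex $ w_{j} $, which is a source (both incident path-edges point out of it).

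It then remains to read off what the two monotone halves say about $ w_{j} $. On the initial segment $ w_{0},\ldots,w_{j} $ the edges give $ w_{i}=Tw_{i-1} $, whence $ w_{j}=T^{j}a $; on the final segment $ w_{j},\ldots,w_{n} $ the edges give $ w_{i-1}=Tw_{i} $, whence $ w_{j}=T^{\,n-j}b $. Because $ a $ and $ b $ are periodic, $ T^{j}a $ lies on $ C_{0} $ and $ T^{\,n-j}b $ lies on $ C_{1} $, so the single vertex $ w_{j} $ belongs to both $ C_{0} $ and $ C_{1} $. Since the cycle through a periodic point is its uniquely determined forward orbit, this forces $ C_{0}=C_{1} $, contradicting the assumption of two distinct non-trivial components. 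The main obstacle, and the crux of the whole argument, is the second paragraph: converting the combinatorially unwieldy undirected path into a single ``peak'' by ruling out sinks through the in-degree-one property, after which the dynamical conclusion is immediate.
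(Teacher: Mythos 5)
Your proof is correct. It turns on the same key fact as the paper's argument, namely Observation \ref{ObsIndegreeOfGraphUniquePathWithDistinctVertices} ($\degin v = 1$ for every vertex), but it processes the connecting walk differently. The paper keeps the (possibly non-simple) walk and performs iterated surgery: after first arranging, via strong connectivity of $G_0$, that the initial edge points forward, any occurrence of a forward edge immediately followed by a backward edge forces $u_{k_0+1}=u_{k_0-1}$ by the in-degree-one property, so the walk can be shortened; this eventually produces a directed path from $v_0$ to $v_1$, and by symmetry one in the opposite direction, contradicting the distinctness of the two strongly connected components. You instead pass to a \emph{simple} undirected path, where the same in-degree-one property rules out an internal sink outright (the two would-be in-neighbours $w_{i-1}\neq w_{i+1}$ are distinct by simplicity), so the orientation pattern is monotone with a single turning source $w_j$, and the dictionary $E_T=\{(Tx,x)\colon x\in X\}$ identifies $w_j=T^{j}a=T^{\,n-j}b$ as a vertex common to both periodic orbits. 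Your route buys a cleaner termination — no iteration, no preprocessing of the first edge, no need for the second, symmetric path — at the cost of invoking the description of non-trivial strongly connected components as forward orbits of periodic points, which you do justify and which is in any case implicit in Lemma \ref{LemSCCIsSimpleCycle}. The only degenerate cases are $j=0$ and $j=n$, where the turning vertex is an endpoint with a single incident path-edge, but your identities $w_j=T^{j}a$ and $w_j=T^{\,n-j}b$ still hold there, so nothing breaks.
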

\begin{proof}
	Suppose to the contrary that $ G_0 = (V_0,E_0) $ and $ G_1 = (V_1,E_1) $ are two different strongly connected components of $ G_T $ with non-empty sets of edges. Obviously, $ V_0\cap V_1 = \varnothing $. Let $ v_0\in V_0 $ and $ v_1\in V_1 $. We will show that there exists a path from $ v_{0} $ to $ v_{1} $. Since $ G_T $ is connected, there exists a sequence $ (u_{0}, \ldots, u_{n+1}) \subset V_{T} $ ($ n\in \mathbb{N} $) such that $ u_{0} = v_{0} $,  $ u_{n+1} = v_{1} $ and $ \{u_{k},u_{k+1}\}\in \tilde{E}_{T} $ for every $ k\in \mathbb{N}\cap[0,n] $. If $ (u_{1},v_{0}) \in E_{T} $, then from the fact that $ G_{0} $ is strongly connected, $ E_{0}\not=\varnothing $ and $ \degin v_{0} = 1 $ we obtain that $ u_{1}\in V_{0} $. This implies that there exists a path from $ v_{0} $ to $ u_{1} $ in $ G_{T} $. Hence, possibly after extending a sequence $ (u_{0},\ldots, u_{n+1}) $, without loss of generality we can assume that $ (u_{0},u_{1})\in E_{T} $. If $ (u_{k},u_{k+1})\in E_{T} $ for every $ k\in \mathbb{N}\cap[0,n] $, then the sequence $ (u_{0},\ldots,u_{n+1}) $ is the path from $ v_{0} $ to $ v_{1} $. Suppose that it is not the case; in particular, $ n\ge 1 $. Set
	\begin{equation*}
		k_{0} = \min\{k\in \mathbb{N}\cap[1,n]\!: (u_{k+1},u_{k})\in E_{T} \}.
	\end{equation*}
	Since $ (u_{k_{0}+1,}u_{k_{0}})\in E_{T} $, $ (u_{k_{0}-1},u_{k_{0}})\in E_{T} $ and $ \degin u_{k_{0}} = 1 $, it follows that $ u_{k_{0}+1} = u_{k_{0}-1} $. If $ k_{0} = n $, then we have $ u_{n-1} = u_{n+1} = v_{1} $. Thus, $ (u_{0},\ldots,u_{n-1}) $ is the path from $ v_{0} $ to $ v_{1} $. If $ k_{0} < n $, then the sequence
	\begin{equation*}
		(u_{0}',\ldots,u_{n}') = (u_{0},\ldots,u_{k_{0}-1},u_{k_{0}+2},\ldots,u_{n+1})
	\end{equation*}
	have the property that $ \{u_{k}',u_{k+1}'\}\in \tilde{E}_{T} $ for every $ k\in \mathbb{N}\cap[0,n-1] $, so the above procedure can be applied to this sequence. Proceeding this way, we eventually obtain the path from $ v_{0} $ to $ v_{1} $. By symmetry, there exists also a path from $ v_{1} $ to $ v_{0} $. Since $ G_{0} $ and $ G_{1} $ are strongly connected, we have $ v_{1}\in V_{0} $ and $ v_{0}\in V_{1} $, which is a contradiction.
\end{proof}
The next result shows that non-trivial strongly connected components of $ G_T $ are of very special type.
\begin{lemma}
	\label{LemSCCIsSimpleCycle}
	Suppose $ X $ is an at most countable set and $ T\!: X\to X $ is such that $ G_T $ is connected and has a unique strongly connected component $ G_0 = (V_0,E_0) $ with non-empty set of edges. Then $ G_0 $ is a simple cycle.
\end{lemma}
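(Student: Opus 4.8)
The plan is to first exhibit a single simple cycle sitting inside $ G_0 $ and then to show that this cycle already exhausts $ G_0 $, both in vertices and in edges. The engine behind every step is Observation \ref{ObsIndegreeOfGraphUniquePathWithDistinctVertices}: since $ \degin v = 1 $ for every $ v\in V_T $, the unique predecessor of any vertex $ v $ is forced to be $ Tv $, so tracing a path \emph{backwards} is a completely deterministic procedure. This is the one fact I will lean on repeatedly.

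First I would fix a simple cycle $ C = (w_0,w_1,\ldots,w_{\ell-1},w_0) $ lying in $ G_0 $. Its existence follows directly from the hypotheses: if $ \#V_0 \ge 2 $, then by the definition of strong connectivity any two distinct vertices of $ V_0 $ lie on a common cycle, and any cycle contains a simple one; if $ \#V_0 = 1 $, then the non-emptiness of $ E_0 $ forces a loop at the single vertex, which is itself a simple cycle. Reading off the cycle edges $ (w_i,w_{i+1})\in E_T $ and recalling $ E_T = \{(Tx,x)\!:x\in X\} $ gives $ Tw_{i+1} = w_i $, that is, $ Tw_i = w_{i-1} $ with indices taken modulo $ \ell $. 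In particular the unique predecessor of each cycle vertex is again a cycle vertex.

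Next I would prove $ V_0 = \{w_0,\ldots,w_{\ell-1}\} $. Let $ v\in V_0 $ be arbitrary with $ v\neq w_0 $. By strong connectivity there is a path $ v = p_0,p_1,\ldots,p_r = w_0 $ in $ G_T $, and since $ (p_j,p_{j+1})\in E_T $ means $ p_j = Tp_{j+1} $, the in-degree-one property forces each term from the top down: $ p_{r-1} = Tw_0 = w_{\ell-1} $, then $ p_{r-2} = Tw_{\ell-1} = w_{\ell-2} $, and a straightforward downward induction (using $ Tw_s = w_{s-1} $) shows that every $ p_{r-j} $ is a vertex of $ C $. In particular $ v = p_0 $ lies on $ C $; note this also shows the whole path stays on $ C $, so no hypothesis on the path other than its existence is needed. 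Hence $ V_0 = \{w_0,\ldots,w_{\ell-1}\} $. Finally, every edge of $ G_0 $ has the form $ (w_i,w_j) $ with $ w_i = Tw_j = w_{j-1} $, whence $ i\equiv j-1 \pmod{\ell} $; thus $ E_0 $ consists of precisely the cycle edges, and $ G_0 = C $ is a simple cycle.

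I expect the only real subtlety to be bookkeeping rather than a genuine obstacle: one must keep the index arithmetic modulo $ \ell $ consistent and stay alert to the orientation of $ E_T $, whose edges point from $ Tx $ to $ x $, i.e.\ \emph{against} the dynamics of $ T $. Once the deterministic-predecessor observation is isolated, both the ``no extra vertices'' and the ``no extra edges'' claims reduce to the short inductions sketched above, and no analytic input is required.
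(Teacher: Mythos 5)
Your proof is correct and rests on the same engine as the paper's, namely Observation \ref{ObsIndegreeOfGraphUniquePathWithDistinctVertices}: since $\degin v = 1$ and $E_T = \{(Tx,x)\colon x\in X\}$, tracing any path backwards from a cycle vertex is deterministic and forces the whole path onto the cycle, which is exactly how the paper (via a contradiction argument comparing distinct-vertex paths into a fixed vertex $w$) absorbs every vertex of $V_0$ into the cycle built from the two paths joining $v$ and $w$. Your version is a direct rather than contradiction argument and additionally spells out that $E_0$ contains no edges beyond the cycle edges, a point the paper leaves implicit, but the mathematical content is the same.
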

\begin{proof}
	If $ \#V_0 = 1 $, then the conclusion obviously holds. Assume $ \#V_0 \ge 2 $. Let $ v,w\in V_{0} $, $ v\not= w $. By the strong connectedness of $ G_0 $, $ v $ and $ w $ lie on the common cycle; we will prove that this cycle is actually the whole graph $ G_{0} $. By Observation \ref{ObsIndegreeOfGraphUniquePathWithDistinctVertices} there exist paths from $ v $ to $ w $ and $ w $ to $ v $ consisting of distinct vertices; call by $ v = v_0,v_1,\ldots,v_n,v_{n+1} = w $ the vertices of such a path from $ v $ to $ w $ and by $ w = w_{0},w_1,\ldots,w_{m},w_{m+1} = v $ the vertices of such a path from $ w $ to $ v $. Set $ V' = \{v,v_1,\ldots,v_{n},w,w_{1},\ldots,w_{m} \} \subset V_0 $. It is enough to show that $ V' = V_0 $. Suppose to the contrary that it is not the case and let $ u\in V_{0}\setminus V' $. Let $ u = u_0,u_1,\ldots,u_{\ell},u_{\ell+1} = w $ be the path connecting $ u $ and $ w $ and consisting of distinct vertices. If $ \ell \le n $, then the reasoning as in the proof of Observation \ref{ObsIndegreeOfGraphUniquePathWithDistinctVertices} gives us that $ u_{i} = v_{i+n-\ell} $ for $ i\in \mathbb{N}\cap [0,\ell] $, which implies that $ u = u_{0} = v_{n-\ell} \in V_{0} $. If $ \ell > n $, then the same reasoning gives us that $ v = u_{\ell-n} $. Since it necessarily holds that $ \ell-n \le m $, we have $ u = u_{0} = w_{m+1+n-\ell}\in V_{0} $. In both cases we get a contradiction.
\end{proof}
The following lemma says that after removing non-trivial strongly connected component from $ G_T $ we actually obtain a family of directed trees.
\begin{lemma}
	\label{LemAfterRemovingSCC}
	Suppose $ X $ is an at most countable set and $ T\!: X\to X $ is such that $ G_T $ is connected and has a unique strongly connected component $ G_0 = (V_0,E_0) $ with non-empty set of edges. Set $ G' = (V_{T}\setminus V_{0},E') $, where $ E' = \{(u,v)\in E_{T}\!: u,v\in V_{T}\setminus V_0 \} $. If $ G_{0}' = (V_{0}', E_{0}') $ is a connected component of $ G' $, then $ G_{0}' $ is a rooted directed tree.
\end{lemma}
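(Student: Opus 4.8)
The plan is to verify directly that $G_0'$ satisfies the definition of a rooted directed tree: that it is connected, acyclic, has every vertex of in-degree at most $1$, and possesses a vertex of in-degree $0$. Connectedness is immediate, since $G_0'$ is by hypothesis a connected component of $G'$. The remaining conditions split into two cheap structural checks, which establish that $G_0'$ is a directed tree, and one genuinely substantial point, the existence of a root.

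For acyclicity I would argue by contradiction: any cycle in $G_0'$ is a cycle in $G_T$ (since $E_0'\subset E_T$), so all of its vertices lie on a common cycle and hence belong to a single strongly connected component of $G_T$ whose set of edges is non-empty. By Lemma \ref{LemUniqueSCC} this component must be $G_0$, forcing these vertices into $V_0$; but $V_0'\subset V_T\setminus V_0$, a contradiction. The in-degree bound is even cheaper: by Observation \ref{ObsIndegreeOfGraphUniquePathWithDistinctVertices} every vertex of $G_T$ has in-degree exactly $1$, and passing to the subgraph $G_0'$ can only delete incoming edges, so $\degin v\le 1$ for each $v\in V_0'$. Together these two facts show that $G_0'$ is a directed tree.

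The hard part is producing the root, i.e. a vertex $b\in V_0'$ with $\degin b = 0$ in $G_0'$; the idea is that $G_0'$ must be attached to the cycle and that the orientation of the attaching edge is forced. First I would observe that $G_T$ has no edge joining two distinct connected components of $G'$, since such an edge would lie entirely in $V_T\setminus V_0$, hence in $E'$, and would merge the two components. As $G_T$ is connected and $V_0\neq\varnothing$, the component $G_0'$ must therefore be joined to $V_0$ by some edge $(c,d)\in E_T$ with one endpoint in $V_0'$ and the other in $V_0$. I would then rule out the case $c\in V_0'$, $d\in V_0$: if $d\in V_0$, then, $G_0$ being a simple cycle by Lemma \ref{LemSCCIsSimpleCycle}, $d$ has an incoming edge from a vertex of $V_0$, and since $\degin d = 1$ this is its only predecessor, so $Td\in V_0$; but $(c,d)\in E_T$ means $c = Td$, giving $c\in V_0$, contrary to $c\in V_0'$. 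Hence necessarily $c\in V_0$ and $b:=d\in V_0'$, so the unique $G_T$-predecessor $Tb = c$ of $b$ lies in $V_0$ and is thus absent from $G_0'$; consequently $b$ has in-degree $0$ in $G_0'$ and serves as the root.

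The main obstacle, and the step I would treat most carefully, is precisely this orientation argument: showing that every component of $G'$ touches the cycle, and that it can only do so through an edge pointing \emph{out} of the cycle, which is what guarantees an in-degree-zero vertex rather than leaving open the possibility of a rootless chain.
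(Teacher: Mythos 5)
Your proof is correct. The first two steps --- acyclicity via the uniqueness of the non-trivial strongly connected component, and the bound $\degin v\le 1$ via Observation \ref{ObsIndegreeOfGraphUniquePathWithDistinctVertices} --- coincide with the paper's. Where you genuinely diverge is in producing the root. The paper fixes $v\in V_{0}$ and $w\in V_{0}'$, re-runs the path-straightening construction from the proof of Lemma \ref{LemUniqueSCC} to obtain a \emph{directed} path from $v$ to $w$, and takes the first vertex of that path lying outside $V_{0}$; its unique predecessor lies in $V_{0}$, so it has in-degree $0$ in $G_{0}'$. You instead avoid constructing any directed path: you note that an edge with both endpoints in $V_{T}\setminus V_{0}$ already belongs to $E'$ and hence cannot join two distinct components of $G'$, so connectedness of $G_{T}$ forces a boundary edge of $E_{T}$ between $V_{0}'$ and $V_{0}$, and you then rule out the orientation pointing into the cycle because, by Lemma \ref{LemSCCIsSimpleCycle}, each vertex of $V_{0}$ already receives its unique incoming edge from within $V_{0}$. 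Both arguments ultimately rest on the same fact that every vertex has in-degree exactly $1$, but yours is more self-contained --- it replaces the somewhat involved directed-path construction borrowed from Lemma \ref{LemUniqueSCC} by a two-line orientation check --- whereas the paper's route exhibits the root concretely as the first exit point of an explicit path leaving the cycle. Your concluding remark that a vertex of in-degree $0$ in a directed tree is automatically the (unique) root is exactly what the paper's definition of rooted directed tree requires, so the argument is complete.
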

\begin{proof}
	From Lemma \ref{LemUniqueSCC} it follows that $ G_0' $ has no cycles. This, together with Observation \ref{ObsIndegreeOfGraphUniquePathWithDistinctVertices}, implies that $ G_{0}' $ is a directed tree. It remains to prove that $ G_{0}' $ has a root. Let $ v\in V_{0} $ and $ w \in V_{0}' $. Proceeding as in the proof of Lemma \ref{LemUniqueSCC} we obtain that there exists a path from $ v $ to $ w $ in $ G_{T} $. Let $ v = v_{0},v_{1},\ldots, v_{m}, v_{m+1} = w $ be the path from $ v $ to $ w $ consisting of distinct vertices. Set $ M = \min\{k\in \mathbb{N}\cap[0,m+1]\!: v_{k}\notin V_{0} \} $. From the fact that $ \degin v_{M} = 1 $ and that $ (v_{M-1},v_{M}) \in E_{T} $ is the only edge coming to $ v_{M} $ it follows that $ v_{M} $ has in-degree 0 in $ G_{0}' $. This implies that $ v_{M} $ is the root of $ G_{0}' $.
\end{proof}

Now we are in the position to state the main results of this section, which classifies graphs of self-maps $ T\!: X\to X $.
\begin{theorem}
	\label{ThmClassificationOfGraphs}
	Suppose $ X $ is an at most countable set and $ T\!: X\to X $ is such that $ G_{T} $ is connected. Then exactly one of the following holds:
	\begin{enumerate}
		\item $ G_{T} $ is a rootless directed tree,
		\item $ G_{T} $ satisfies the conditions below:
		\begin{enumerate}
			\item there exist $ \kappa\in \mathbb{N}_{1} $ and distinct vertices $ v_{0}\ldots,v_{\kappa-1}\in V_{T} $ such that $ (v_{j},v_{j+1})\in E_{T} $ for $ j\in\mathbb{N}\cap [0,\kappa-2] $ and $ (v_{\kappa-1},v_{0})\in E $
			\item there exists a partition $ \mathcal{R} $ of $ V_{T}\setminus\{v_{0},\ldots,v_{\kappa-1}\} $ such that for every $ R\in \mathcal{R} $ the graph $ (R,E_{R}) $, where $ E_{R} = \{(u,w)\in E_{T}\!: u,w\in R \} $, is a rooted directed tree with the root $ \omega_{R} $ having the property that there exists exactly one $ i_{R}\in\mathbb{N}\cap[0,\kappa-1] $ satisfying $ (v_{i_{R}},\omega_{R})\in E_{T} $.
		\end{enumerate}
	\end{enumerate}
\end{theorem}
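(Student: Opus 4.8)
The plan is to dichotomize on whether the graph $G_T$ contains a strongly connected component with a non-empty set of edges, and then to read off each of the two alternatives from the lemmas already proved. These two alternatives are mutually exclusive: the graph in (1) is acyclic by definition, whereas the graph in (2) contains the cycle $(v_{0},\ldots,v_{\kappa-1},v_{0})$; hence it suffices to prove that at least one of them always holds.

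Suppose first that $G_T$ has no strongly connected component with non-empty edge set. I would first argue that $G_T$ is then acyclic: any cycle (in particular a self-loop coming from a fixed point of $T$) would force all of its vertices to lie on a common cycle and thus to constitute a strongly connected component whose edge set is non-empty, contradicting our assumption. Since $G_T$ is connected, acyclic, and satisfies $\degin v = 1 \le 1$ for every $v$ by Observation \ref{ObsIndegreeOfGraphUniquePathWithDistinctVertices}, it is a directed tree. A root would have in-degree $0$, which is impossible because every in-degree equals $1$; therefore the tree is rootless and alternative (1) holds.

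Suppose now that $G_T$ does have a strongly connected component with non-empty edge set. By Lemma \ref{LemUniqueSCC} it is unique, call it $G_0 = (V_0,E_0)$, and by Lemma \ref{LemSCCIsSimpleCycle} it is a simple cycle. Writing $\kappa = \# V_0$ and labelling the vertices of this cycle as $v_0,\ldots,v_{\kappa-1}$ in the order induced by its edges yields condition (i). For condition (ii), I would remove $V_0$ and consider $G' = (V_T\setminus V_0, E')$ as in Lemma \ref{LemAfterRemovingSCC}; let $\mathcal{R}$ be the partition of $V_T\setminus V_0$ into the vertex sets of the connected components of $G'$. By that lemma each $(R,E_R)$ is a rooted directed tree, say with root $\omega_R$. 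It remains to locate the unique edge entering $\omega_R$ from the cycle. The crucial point is that $\omega_R$ has in-degree $1$ in $G_T$ by Observation \ref{ObsIndegreeOfGraphUniquePathWithDistinctVertices} but in-degree $0$ inside $(R,E_R)$; hence its single incoming edge $(T\omega_R,\omega_R)$ cannot lie in $E'$, which forces $T\omega_R\in V_0$. Setting $v_{i_R} = T\omega_R$ gives an edge $(v_{i_R},\omega_R)\in E_T$, and $i_R$ is unique because $T\omega_R$ is a single vertex and the $v_j$ are distinct. This establishes (ii).

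I expect the only delicate point to be the verification that, in the first case, the absence of a non-trivial strongly connected component really does preclude every kind of cycle --- in particular the degenerate self-loop arising from a fixed point of $T$ --- and, in the second case, the argument that the incoming edge of each root $\omega_R$ lands on the cycle rather than inside the forest; both reduce to the in-degree-one property of Observation \ref{ObsIndegreeOfGraphUniquePathWithDistinctVertices}. The remaining bookkeeping (for instance the vacuous case $V_T = V_0$, where $\mathcal{R} = \varnothing$) is routine.
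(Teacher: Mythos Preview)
Your proposal is correct and follows essentially the same approach as the paper: dichotomize on whether $G_T$ has a non-trivial strongly connected component, then invoke Observation \ref{ObsIndegreeOfGraphUniquePathWithDistinctVertices} in the first case and Lemmata \ref{LemUniqueSCC}, \ref{LemSCCIsSimpleCycle}, \ref{LemAfterRemovingSCC} in the second. Your write-up is in fact more explicit than the paper's very terse proof --- in particular, you spell out the mutual exclusivity of the two alternatives and the argument that $T\omega_R \in V_0$ via the in-degree-one property, both of which the paper leaves implicit.
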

\begin{proof}
	First, suppose $ G_{T} $ has no non-trivial strongly connected components. This implies that $ G_{T} $ has no cycles. By Observation \ref{ObsIndegreeOfGraphUniquePathWithDistinctVertices}, $ G_{T} $ is a rootless directed tree, so in this case (i) holds. Assuming now that $ G_{T} $ has a non-trivial strongly connected component, by calling Lemmata \ref{LemSCCIsSimpleCycle} and \ref{LemAfterRemovingSCC}, we derive that $ G_{T} $ satisfies the conditions in (ii).
\end{proof}
In Figure \ref{ImGraphWithCycle} we present the example of graph $ G_{T} $ satisfying Theorem \ref{ThmClassificationOfGraphs}.(ii); for the convenience, we denote $ \{R_{i,k}\}_{k=1}^{N_{i}} = \{R\in \mathcal{R}\!: i_{R} = i \} $ for $ i\in \mathbb{N}\cap [0,\kappa-1] $, where $ N_{i} $ is the cardinality of the set on the right hand side.
\begin{figure}
	\centering
	\includegraphics[scale=0.5]{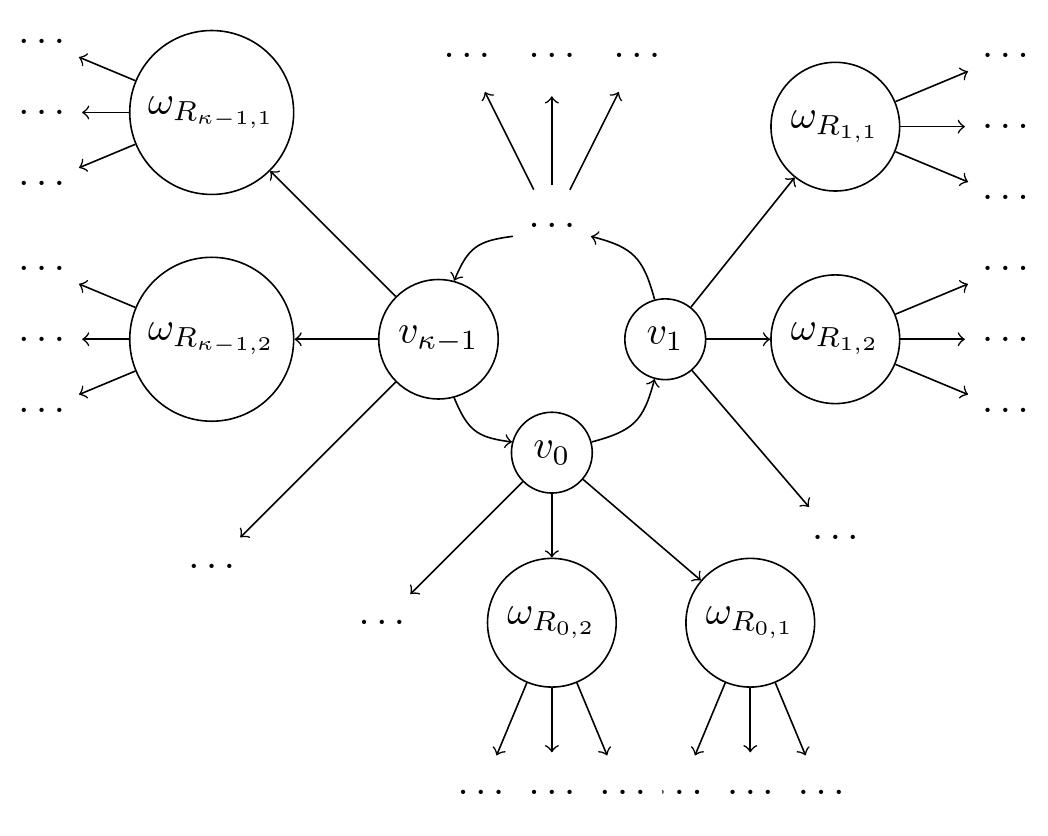}
	\caption{Graph $ G_{T} $ satisfying Theorem \ref{ThmClassificationOfGraphs}.(ii)}
	\label{ImGraphWithCycle}
\end{figure}

	\section{Composition operators as weighted shifts on directed graphs}
In this section we present how to transform a composition operator on discrete space into a weighted shift on a directed graph; we prove several routine results about this weighted shift.
Suppose that $ (X,\mu) $ is a discrete measure space and $ T\!: X\to X $ is a transformation on $ X $ such that $ h_{T}\in L^{\infty}(\mu) $. Define the family of weights $ \pmb{\lambda}_{T} = (\lambda_{v})_{v\in V_{T}} $ by
\begin{equation}
	\label{FormDefinitionOfWeights}
	\lambda_{v} = \frac{\sqrt{\mu(v)}}{\sqrt{\mu(Tv)}}, \qquad v\in V_{T},
\end{equation}
From \eqref{FormRadonNikodymDerivativeDiscrete} it can be easily derived that
\begin{equation*}
	\sup_{u\in V} \sum_{\substack{v\in V_{T}\\ Tv = u}} \lambda_{v}^{2} < \infty \iff h_{T}\in L^{\infty}(\mu).
\end{equation*}
Since we assume that $ h_{T}\in L^{\infty}(\mu) $, the above allows us to define the operator $ S_{\pmb{\lambda}_{T}}\in \mathbf{B}(\ell^{2}(V_{T})) $ as follows:
\begin{equation*}
	S_{\pmb{\lambda}_{T}} e_{u} = \sum_{\substack{v\in V_{T}\\ Tv = u}} \lambda_{v}e_{v}, \qquad u\in V_{T}.
\end{equation*}
It is a matter of routine to prove the following lemma.
\begin{lemma}
	\label{LemUnitaryEquivalence}
	Let $ (X,\mu) $ be a discrete measure space. Suppose $ T\!: X\to X $ is such that $ h_{T}\in L^{\infty}(\mu) $. Then the operator $ U\!: \ell^{2}(V_{T})\to L^{2}(\mu) $ given by
	\begin{equation*}
		Ue_{v} = \frac{1}{\sqrt{\mu(v)}}\chi_{v}, \qquad v\in V_{T} = X,
	\end{equation*}
	makes $ C_{T} $ and $ S_{\pmb{\lambda}_{T}} $ unitarily equivalent.
\end{lemma}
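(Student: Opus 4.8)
The plan is to verify directly that $U$ is a unitary operator and that it intertwines $C_T$ and $S_{\pmb{\lambda}_T}$. First I would check that $U$ is well-defined and unitary. Since $\{e_v\}_{v\in V_T}$ is an orthonormal basis of $\ell^2(V_T)$ and $V_T = X$, it suffices to verify that the images $Ue_v = \tfrac{1}{\sqrt{\mu(v)}}\chi_v$ form an orthonormal basis of $L^2(\mu)$. For orthonormality, I would compute $\langle Ue_v, Ue_w\rangle_{L^2(\mu)} = \tfrac{1}{\sqrt{\mu(v)\mu(w)}}\int \chi_v \chi_w \ddd\mu$; since $\chi_v\chi_w = 0$ when $v\neq w$ and $\int \chi_v^2 \ddd\mu = \mu(v)$, this equals $\delta_{v,w}$. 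For completeness of the system in $L^2(\mu)$, I would note that the functions $\{\chi_v\}_{v\in X}$ span a dense subspace (indeed all of the simple functions, which are dense since $X$ is discrete), so $U$ extends to a surjective isometry, hence a unitary.

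Next I would establish the intertwining relation $U S_{\pmb{\lambda}_T} = C_T U$, which it suffices to check on the basis vectors $e_u$. On one side, using the definition of $S_{\pmb{\lambda}_T}$ and linearity of $U$,
\begin{equation*}
	U S_{\pmb{\lambda}_T} e_u = U\!\!\sum_{\substack{v\in V_T\\ Tv = u}}\!\! \lambda_v e_v = \sum_{\substack{v\in V_T\\ Tv = u}} \frac{\lambda_v}{\sqrt{\mu(v)}}\chi_v.
\end{equation*}
On the other side, I would compute $C_T U e_u = \tfrac{1}{\sqrt{\mu(u)}}\,C_T \chi_u = \tfrac{1}{\sqrt{\mu(u)}}\,\chi_u\circ T$. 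The key observation is that $\chi_u\circ T = \chi_{T^{-1}(\{u\})} = \sum_{v:\,Tv=u}\chi_v$, since $(\chi_u\circ T)(x) = 1$ exactly when $Tx = u$. Substituting the definition $\lambda_v = \sqrt{\mu(v)}/\sqrt{\mu(Tv)} = \sqrt{\mu(v)}/\sqrt{\mu(u)}$ (valid since $Tv = u$) into the first expression gives $\tfrac{\lambda_v}{\sqrt{\mu(v)}} = \tfrac{1}{\sqrt{\mu(u)}}$, so both sides reduce to $\tfrac{1}{\sqrt{\mu(u)}}\sum_{v:\,Tv=u}\chi_v$, establishing equality.

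Since $U$ is unitary and $U S_{\pmb{\lambda}_T} = C_T U$, we conclude $C_T = U S_{\pmb{\lambda}_T} U^{-1}$, so $C_T$ and $S_{\pmb{\lambda}_T}$ are unitarily equivalent. I do not anticipate a genuine obstacle here, as the lemma is flagged as routine; the only point requiring a little care is the convergence and rearrangement of the (possibly infinite) sum $\sum_{v:\,Tv=u}\lambda_v e_v$ when $\degin u$ is infinite in the sense of the fibre $T^{-1}(\{u\})$ being infinite. This is precisely where the hypothesis $h_T\in L^\infty(\mu)$, equivalently $\sup_u \sum_{v:\,Tv=u}\lambda_v^2 < \infty$, guarantees that $S_{\pmb{\lambda}_T}$ is a bounded operator and the relevant series converge in $\ell^2(V_T)$, so that applying the bounded operator $U$ commutes with the summation. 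Keeping track of this boundedness is the one step I would state explicitly rather than leave implicit.
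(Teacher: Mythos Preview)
Your proof is correct and follows essentially the same approach as the paper: both verify that $U$ is unitary via the orthonormality computation $\langle Ue_v,Ue_w\rangle=\delta_{v,w}$ together with density of $\{\chi_v\}$ in $L^2(\mu)$, and then check the intertwining relation $US_{\pmb{\lambda}_T}=C_TU$ on basis vectors using $\chi_u\circ T=\chi_{T^{-1}(\{u\})}$ and the explicit form of $\lambda_v$. Your added remark about convergence when $T^{-1}(\{u\})$ is infinite is a nice touch that the paper leaves implicit.
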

\begin{proof}
	First, we show that $ U $ is unitary. For $ v,w\in V_{T} $ we have
	\begin{equation*}
		\langle Ue_{v}, Ue_{w}\rangle = \left\langle \frac{1}{\sqrt{\mu(v)}}\chi_{v}, \frac{1}{\sqrt{\mu(w)}}\chi_{w}\right\rangle =\frac{1}{\sqrt{\mu(v)}\cdot \sqrt{\mu(w)}}\int_{X}\chi_{v}\cdot \chi_{w}\ddd \mu.
	\end{equation*}
	If $ v\not=w $, then $ \int_{X}\chi_{v}\cdot \chi_{w}\ddd\mu = 0 $; if $ v = w $, then
	\begin{equation*}
		\frac{1}{\sqrt{\mu(v)}\cdot \sqrt{\mu(w)}}\int_{X}\chi_{v}\cdot \chi_{w}\ddd \mu = \frac{1}{\mu(v)}\cdot \mu(v) = 1.
	\end{equation*}
	Hence, 
	\begin{equation*}
		\langle Ue_{v}, Ue_{w}\rangle = \begin{cases}
			0, & \text{ if } v\not=w\\
			1, & \text{ if } v = w
		\end{cases},
	\end{equation*}
	which implies that $ \langle Ue_{v}, Ue_{w}\rangle = \langle e_{v},e_{w}\rangle $. Since $ \{e_{v}\}_{v\in V_{T}} $ is the orthonormal basis of $ \ell^{2}(V_{T}) $, it follows that $ \langle Uf,Ug\rangle = \langle f,g\rangle $ for $ f,g\in \ell^{2}(V_{T}) $, that is, $ U $ is isometry. In turn, $ \overline{\lin\{\chi_{v}\!: v\in V_{T}\}} = L^{2}(\mu) $, so $ U(\ell^{2}(V_{T})) = L^{2}(\mu) $. Therefore, $ U $ is unitary. Next, if $ v\in V_{T} $, then
	\begin{equation*}
		C_{T}Ue_{v} = \frac{1}{\sqrt{\mu(v)}}\cdot \chi_{v}\circ T = \frac{1}{\sqrt{\mu(v)}}\chi_{T^{-1}(\{v\})}
	\end{equation*}
	and
	\begin{align*}
		US_{\pmb{\lambda}_{T}}e_{v} &= U \sum_{\substack{u\in V_{T}\\ Tu=v}} \lambda_{u}e_{u}\\
		&=\sum_{\substack{u\in V_{T}\\ Tu=v}} \frac{\sqrt{\mu(u)}}{\sqrt{\mu(v)}}\cdot \frac{1}{\sqrt{\mu(u)}}\chi_{u}\\
		&= \frac{1}{\sqrt{\mu(v)}}\sum_{\substack{u\in V_{T}\\ Tu=v}} \chi_{u} = \frac{1}{\sqrt{\mu(v)}}\chi_{T^{-1}(\{v\})}.
	\end{align*}
	Hence, $ C_{T}U = US_{\pmb{\lambda}_{T}} $, what completes the proof.
\end{proof}
The lemma below states that connected components of $ G_{T} $ induce reducing subspaces of $ S_{\pmb{\lambda}_{T}} $, so we can restrict our investigation to the case when the graph $ G_{T} $ is connected.
\begin{lemma}
	\label{LemCCAreInvariantSubspaces}
	Suppose $ (X,\mu) $ is a discrete measure space and $ T\!: X\to X $ is a transformation on $ X $ such that $ h_{T}\in L^{\infty}(\mu) $. Let $ G_{0} = (V_{0}, E_{0}) $ be a connected component of $ G_{T} $. Then the space $ M = \overline{\lin\{e_{v}\!: v\in V_{0} \}} $ is a reducing subspace for $ S_{\pmb{\lambda}_{T}} $. 
\end{lemma}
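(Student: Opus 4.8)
The plan is to verify that $M$ is invariant under both $S_{\pmb{\lambda}_{T}}$ and its adjoint $S_{\pmb{\lambda}_{T}}^{\ast}$, which is the standard criterion for $M$ to be a reducing subspace. Since $S_{\pmb{\lambda}_{T}}$ is bounded and $M$ is the closed linear span of the orthonormal vectors $\{e_{v}\!:v\in V_{0}\}$, it suffices to check both invariances on these basis vectors and then extend by linearity and continuity.

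First I would record the action of the adjoint on the basis. A direct computation of inner products gives $\langle S_{\pmb{\lambda}_{T}}e_{u},e_{v}\rangle = \lambda_{v}$ when $Tv=u$ and $0$ otherwise, whence $S_{\pmb{\lambda}_{T}}^{\ast}e_{v} = \lambda_{v}e_{Tv}$ for every $v\in V_{T}$.

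The forward invariance is then immediate. For $u\in V_{0}$ we have $S_{\pmb{\lambda}_{T}}e_{u}=\sum_{v\in V_{T},\,Tv=u}\lambda_{v}e_{v}$, and each index $v$ occurring in this sum satisfies $(u,v)\in E_{T}$ (recall $E_{T}=T^{-1}$), so $v$ is adjacent to $u$ via $\tilde{E}_{T}$; since $u$ belongs to the connected component $G_{0}$ and membership in a connected component is preserved under passing to an adjacent vertex, we get $v\in V_{0}$, and therefore $S_{\pmb{\lambda}_{T}}e_{u}\in M$. For the backward invariance, given $v\in V_{0}$ the formula $S_{\pmb{\lambda}_{T}}^{\ast}e_{v}=\lambda_{v}e_{Tv}$ together with $(Tv,v)\in E_{T}$ shows that $Tv$ is adjacent to $v$, hence $Tv\in V_{0}$ and $S_{\pmb{\lambda}_{T}}^{\ast}e_{v}\in M$. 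By linearity and continuity these inclusions pass from the basis to all of $M$, yielding $S_{\pmb{\lambda}_{T}}M\subset M$ and $S_{\pmb{\lambda}_{T}}^{\ast}M\subset M$, which proves that $M$ reduces $S_{\pmb{\lambda}_{T}}$.

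I do not expect any genuine obstacle here, as the lemma is routine. The only two points warranting a word of care are the convergence of the (possibly infinite) sum defining $S_{\pmb{\lambda}_{T}}e_{u}$, which is guaranteed by the hypothesis $h_{T}\in L^{\infty}(\mu)$ that makes $S_{\pmb{\lambda}_{T}}$ bounded in the first place, and the observation that a vertex adjacent to a vertex of $V_{0}$ again lies in $V_{0}$, which is immediate from the maximality built into the definition of a connected component.
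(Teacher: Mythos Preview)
Your proof is correct and essentially the same as the paper's: both verify invariance on the basis vectors $e_v$ using the fact that a vertex adjacent (in either direction) to a vertex of $V_0$ again lies in $V_0$. The only cosmetic difference is that the paper checks that $M^{\perp}$ is $S_{\pmb{\lambda}_T}$-invariant, whereas you check that $M$ is $S_{\pmb{\lambda}_T}^{\ast}$-invariant (anticipating the adjoint formula the paper only states later in Lemma~\ref{LemAdjointModulusCauchyDualFormulas}); these are of course equivalent criteria for $M$ to be reducing.
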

\begin{proof}
	Let $ v\in V_{0} $. Since $ G_{0} $ is a connected component of $ G_{T} $ we have that for every $ u\in V_{T} $ satisfying $ (v,u)\in E_{T} $, $ u\in V_{0} $ and $ (v,u)\in E_0 $. Hence, $ S_{\pmb{\lambda}_{T}}e_{v} \in M $, which implies that $ S_{\pmb{\lambda}_{T}}M \subset M $. Next, we will prove that $ M^{\perp} $ is also invariant for $ S_{\pmb{\lambda}_{T}} $. It can be easily seen that $ (u,v),(v,u)\notin E $ for every $ v\in V_{0} $ and every $ u\in V_{T}\setminus V_{0} $. From this it follows that $ S_{\pmb{\lambda}_{T}}e_{u} \in M^{\perp} $ for every $ u\in V_{T}\setminus {V_{0}} $. This implies that $ S_{\pmb{\lambda}_{T}} M^{\perp} \subset M^{\perp} $, what completes the proof.
\end{proof}
It turns out that that the operator $ S_{\pmb{\lambda}} $ preserves the orthogonality of the standard orthonormal basis of $ \ell^{2}(V_{T}) $.
\begin{observation}
	\label{ObsPreservingOrthogonality}
	Suppose $ (X,\mu) $ is a discrete measure space and $ T\!: X\to X $ is a transformation on $ X $ such that $ h_{T}\in L^{\infty}(\mu) $ and that $ G_{T} $ is connected. Then $ S_{\pmb{\lambda}_{T}} e_{u}\perp S_{\pmb{\lambda}_{T}} e_{v} $ for every vertices $ u,v\in V_{T} $, $ v\not= u $.
\end{observation}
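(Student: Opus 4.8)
The plan is to compute the inner product $\langle S_{\pmb{\lambda}_{T}}e_{u}, S_{\pmb{\lambda}_{T}}e_{v}\rangle$ directly from the definition and show that it vanishes whenever $u\neq v$. First I would expand both vectors using the defining formula, writing
\[
	S_{\pmb{\lambda}_{T}}e_{u} = \sum_{\substack{a\in V_{T}\\ Ta = u}}\lambda_{a}e_{a}, \qquad S_{\pmb{\lambda}_{T}}e_{v} = \sum_{\substack{b\in V_{T}\\ Tb = v}}\lambda_{b}e_{b},
\]
so that the inner product becomes the double sum $\sum_{a,b}\lambda_{a}\lambda_{b}\langle e_{a},e_{b}\rangle$ taken over $a\in T^{-1}(\{u\})$ and $b\in T^{-1}(\{v\})$. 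Since $\{e_{w}\}_{w\in V_{T}}$ is orthonormal, only the diagonal terms $a=b$ survive, reducing the expression to $\sum_{w}\lambda_{w}^{2}$ over those $w$ lying simultaneously in both fibres $T^{-1}(\{u\})$ and $T^{-1}(\{v\})$.

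The key step is then the observation that these two fibres are disjoint when $u\neq v$. Indeed, $T$ is a genuine (single-valued) map on $X$, so no vertex $w$ can satisfy both $Tw = u$ and $Tw = v$ for distinct $u,v$; equivalently, each $w\in V_{T}$ has exactly one outgoing edge in $G_{T}$, namely $(Tw, w)$. Hence $T^{-1}(\{u\})\cap T^{-1}(\{v\}) = \varnothing$, the surviving sum is empty, and the inner product equals $0$.

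I expect no genuine obstacle here: the entire statement rests on the single-valuedness of $T$, which forces the fibres $T^{-1}(\{u\})$ to partition $V_{T}$. In particular, the connectedness hypothesis on $G_{T}$ plays no role in the argument and could in fact be dropped; it is presumably inherited from the standing assumptions of the section. The only point deserving a word of care is the interchange of summation implicit in expanding the inner product of two possibly infinite sums, which is justified since $S_{\pmb{\lambda}_{T}}e_{u}$ and $S_{\pmb{\lambda}_{T}}e_{v}$ are honest elements of $\ell^{2}(V_{T})$ and the inner product is continuous.
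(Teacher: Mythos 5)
Your proof is correct and follows essentially the same route as the paper's: both reduce the inner product via orthonormality of $\{e_{w}\}_{w\in V_{T}}$ to a sum over the intersection of the two fibres $T^{-1}(\{u\})$ and $T^{-1}(\{v\})$, which is empty because $T$ is single-valued (the paper phrases this as $\degin w = 1$ for every $w$, citing Observation \ref{ObsIndegreeOfGraphUniquePathWithDistinctVertices}). Your side remark that connectedness is not needed is also accurate.
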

\begin{proof}
	Let $ v,u \in V_{T} $ be distinct vertices. By Observation \ref{ObsIndegreeOfGraphUniquePathWithDistinctVertices}, we have
	\begin{equation*}
		\{w\in V_{T}\!: (v,w)\in E_{T} \} \cap \{w\in V_{T}\!: (u,w)\in E_{T} \} = \varnothing.
	\end{equation*}
	Since $ e_{w_{1}}\perp e_{w_{2}} $ for every two distinct vertices $ w_{1},w_{2}\in V_{T} $, we deduce from the above that
	\begin{equation*}
		\langle S_{\pmb{\lambda}_{T}}e_{u},S_{\pmb{\lambda}_{T}}e_{v}\rangle = \sum_{\substack{w_{1}\in V_{T}\\(u,w_{1})\in E_{T}}}\sum_{\substack{w_{2}\in V_{T}\\(v,w_{2})\in E_{T}}} \lambda_{w_{1}}\overline{\lambda_{w_{2}}}\langle e_{w_{1}},e_{w_{2}}\rangle = 0.
	\end{equation*}
	Hence, $ S_{\pmb{\lambda}_{T}} e_{u}\perp S_{\pmb{\lambda}_{T}} e_{v} $.
\end{proof}
Set\footnote{We stick to the convention that $ \prod \varnothing = 1 $.}
\begin{equation*}
	\Lambda_{m,i} := \prod_{j=m+1}^{m+i} \lambda_{v_{j\mod \kappa}}, \qquad m\in\mathbb{N}\cap[0,\kappa-1], i\in\mathbb{N}.
\end{equation*}
For the further use we emphasize several formulas.
\begin{lemma}
	\label{LemRecursiveFormulas}
	Suppose $ (X,\mu) $ is a discrete space and $ T\!: X\to X $ is such that $ h_{T}\in L^{\infty}(\mu) $ and that $ G_{T} $ is connected and satisfies Theorem \ref{ThmClassificationOfGraphs}.(ii). 
	\begin{enumerate}
		\item For every $ m\in \mathbb{N}\cap[0,\kappa-1] $, $ \Lambda_{m,\kappa} = 1 $.
		\item For every $ k\in\mathbb{N} $ and every $ m\in\mathbb{N}\cap[0,\kappa-1] $ we have
		\begin{align}
			\label{FormNormOfSquareSmallPowers}
			S_{\pmb{\lambda}_{T}}^{k}e_{v_{m}} &= \Lambda_{m,k} e_{v_{(m+k)\mod \kappa}} + \sum_{i = 0}^{k-1} \Lambda_{m,i} \!\!\!\!\!\sum_{\substack{R\in\mathcal{R}\\i_{R} = i+m\mod\kappa}}\!\!\!\!\! \lambda_{\omega_{R}}S_{\pmb{\lambda}_{T}}^{k-i-1}e_{\omega_{R}};
		\end{align}
		\item For every $ n\in\mathbb{N} $ and every $ m\in\mathbb{N}\cap[0,\kappa-1] $ the following recursive formula is satisfied:
		\begin{align}
			\label{FormRecursiveNormOfSquare}
			S_{\pmb{\lambda}_{T}}^{n+\kappa}e_{v_{m}} &= S_{\pmb{\lambda}_{T}}^{n}e_{v_{m}} + \sum_{i = 0}^{\kappa-1}\Lambda_{m,i}\!\!\!\!\!\sum_{\substack{R\in\mathcal{R}\\i_{R} = i+m\mod\kappa}}\!\!\!\!\! \lambda_{\omega_{R}}S_{\pmb{\lambda}_{T}}^{n+\kappa-i-1}e_{\omega_{R}}.
		\end{align}
	\end{enumerate}
\end{lemma}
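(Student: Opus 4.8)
The plan is to treat the three parts in order, each building on the previous one.

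For part (i) I would observe that $\Lambda_{m,\kappa}=\prod_{j=m+1}^{m+\kappa}\lambda_{v_{j\mod\kappa}}$ runs over $\kappa$ consecutive residues modulo $\kappa$, hence equals $\prod_{j=0}^{\kappa-1}\lambda_{v_{j}}$. Since $(v_{j},v_{j+1})\in E_{T}$ and $(v_{\kappa-1},v_{0})\in E$ mean $Tv_{j}=v_{(j-1)\mod\kappa}$ along the cycle, the definition \eqref{FormDefinitionOfWeights} gives $\lambda_{v_{j}}=\sqrt{\mu(v_{j})}/\sqrt{\mu(v_{(j-1)\mod\kappa})}$, and the product telescopes to $1$.

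The technical heart is part (ii), which I would prove by induction on $k$. The base case $k=0$ is immediate, since $\Lambda_{m,0}=1$, $v_{m\mod\kappa}=v_{m}$ and the inner sum is empty. The key preliminary computation is the action of $S_{\pmb{\lambda}_{T}}$ on a cycle vertex: because the only preimages of $v_{m}$ under $T$ are the next cycle vertex $v_{(m+1)\mod\kappa}$ together with the roots $\omega_{R}$ of the trees $R\in\mathcal{R}$ with $i_{R}=m$, the definition of $S_{\pmb{\lambda}_{T}}$ yields
\begin{equation*}
	S_{\pmb{\lambda}_{T}}e_{v_{m}}=\lambda_{v_{(m+1)\mod\kappa}}e_{v_{(m+1)\mod\kappa}}+\sum_{\substack{R\in\mathcal{R}\\ i_{R}=m}}\lambda_{\omega_{R}}e_{\omega_{R}}.
\end{equation*}
For the inductive step I would apply $S_{\pmb{\lambda}_{T}}$ to \eqref{FormNormOfSquareSmallPowers}. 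Applying it to the remainder sum simply raises each power $S_{\pmb{\lambda}_{T}}^{k-i-1}$ to $S_{\pmb{\lambda}_{T}}^{(k+1)-i-1}$, reproducing the terms $i=0,\dots,k-1$. Applying it to the leading term $\Lambda_{m,k}e_{v_{(m+k)\mod\kappa}}$ and invoking the displayed formula splits it into a new leading term $\Lambda_{m,k}\lambda_{v_{(m+k+1)\mod\kappa}}e_{v_{(m+k+1)\mod\kappa}}=\Lambda_{m,k+1}e_{v_{(m+k+1)\mod\kappa}}$ (absorbing one factor into $\Lambda$) plus precisely the missing $i=k$ summand. The main thing to watch is this index bookkeeping modulo $\kappa$: that the factor $\lambda_{v_{(m+k+1)\mod\kappa}}$ is exactly the one extending $\Lambda_{m,k}$ to $\Lambda_{m,k+1}$, and that the freshly created roots have $i_{R}=(m+k)\mod\kappa$, matching the $i=k$ term of the sum for $k+1$.

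Part (iii) then follows with no further work. I would specialize \eqref{FormNormOfSquareSmallPowers} to $k=\kappa$; by part (i) the coefficient $\Lambda_{m,\kappa}=1$ and $v_{(m+\kappa)\mod\kappa}=v_{m}$, so the leading term collapses to $e_{v_{m}}=S_{\pmb{\lambda}_{T}}^{0}e_{v_{m}}$. Applying $S_{\pmb{\lambda}_{T}}^{n}$ to the resulting identity and using $S_{\pmb{\lambda}_{T}}^{n}S_{\pmb{\lambda}_{T}}^{\kappa-i-1}=S_{\pmb{\lambda}_{T}}^{n+\kappa-i-1}$ yields \eqref{FormRecursiveNormOfSquare} directly.
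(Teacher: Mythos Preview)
Your proposal is correct and follows essentially the same approach as the paper: part (i) by the telescoping product (the paper just says ``from the definition of the weights''), part (ii) by induction on $k$ with the inductive step obtained by applying $S_{\pmb{\lambda}_{T}}$ and splitting the leading term, and part (iii) by specializing (ii) to $k=\kappa$, using (i), and applying $S_{\pmb{\lambda}_{T}}^{n}$. The only cosmetic difference is that the paper checks $k=0,1$ as base cases whereas you (correctly) need only $k=0$.
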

\begin{proof}
	(i). It follows from the definition of weights $ \lambda_{i} $, $ i\in \mathbb{N}\cap[0,\kappa-1] $.\\
	(ii). We prove it by induction. For $ k=0,1 $ the equality \eqref{FormNormOfSquareSmallPowers} holds. If \eqref{FormNormOfSquareSmallPowers} holds for $ k\in \mathbb{N} $, then
	\begin{align*}
		S^{k+1}_{\pmb{\lambda}_{T}}e_{v_{m}} &= S_{\pmb{\lambda}_{T}}\left( \Lambda_{m,k} e_{v_{(m+k)\mod \kappa}} + \sum_{i = 0}^{k-1} \Lambda_{m,i} \!\!\!\!\!\sum_{\substack{R\in\mathcal{R}\\i_{R} = i+m\mod\kappa}}\!\!\!\!\! \lambda_{\omega_{R}}S_{\pmb{\lambda}_{T}}^{k-i-1}e_{\omega_{R}} \right)\\
		&=\Lambda_{m,k+1}e_{v_{(m+k+1)}\mod \kappa}+\Lambda_{m,k}\sum_{\substack{R\in\mathcal{R}\\i_{R} = k+m\mod\kappa}}\!\!\!\!\! \lambda_{\omega_{R}}e_{\omega_{R}}\\
		&\qquad\qquad\qquad\qquad\qquad+\sum_{i = 0}^{k-1} \Lambda_{m,i} \!\!\!\!\!\sum_{\substack{R\in\mathcal{R}\\i_{R} = i+m\mod\kappa}}\!\!\!\!\! \lambda_{\omega_{R}}S_{\pmb{\lambda}_{T}}^{k-i}e_{\omega_{R}}\\
		&=\Lambda_{m,k+1} e_{v_{(m+k+1)\mod \kappa}} + \sum_{i = 0}^{k} \Lambda_{m,i} \!\!\!\!\!\sum_{\substack{R\in\mathcal{R}\\i_{R} = i+m\mod\kappa}}\!\!\!\!\! \lambda_{\omega_{R}}S_{\pmb{\lambda}_{T}}^{k-i}e_{\omega_{R}}.
	\end{align*}
	Hence, \eqref{FormNormOfSquareSmallPowers} holds for $ k+1 $.\\
	(iii). It is enough to apply the operator $ S^{n}_{\pmb{\lambda}_{T}} $ to both sides of \eqref{FormNormOfSquareSmallPowers} with $ k = \kappa $ and use (i).
\end{proof}

	\section{$ m $-isometric composition operators: graphs with one cycle}
\label{SecMIsometricCompOperators}
In \cite{kosmiderMisometricCompositionOperators2021} the authors studied composition operators with the property that the associated graphs have one cycle and one branching point. In this paper we also limit our considerations to operators such that related graphs have one cycle, but with no further assumptions on degrees of vertices. From now on we fix a discrete space $ (X,\mu) $ and a self-map $ T\!: X\to X $ such that $ h_{T}\in L^{\infty}(X) $, so that $ C_{T} $ (and consequently $ S_{\pmb{\lambda}_{T}} $) is bounded. Since, by Observation \ref{LemUnitaryEquivalence}, the composition operator $ C_{T} $ is unitarily equivalent to $ S_{\pmb{\lambda}_{T}} $, we investigate $ m $-isometricity of $ S_{\pmb{\lambda}_{T}} $. Whenever we assume that the associated graph $ G_{T} $ defined in Section \ref{SecClassificationOfCompOperators} satisfies Theorem \ref{ThmClassificationOfGraphs}.(ii), we stick to the notation introduced there.  \par
Let us start from the general result concerning $ m $-isometric composition operators on discrete spaces.
\begin{lemma}[\mbox{cf. \cite[Proposition 2.5]{kosmiderMisometricCompositionOperators2021}}]
	\label{LemGeneralCharacterizationOfMIsometries}
	Suppose $ T\!: X\to X $ is such that $ G_{T} $ is connected and such that $ h_{T}\in L^{\infty}(\mu) $. For $ m\in\mathbb{N}_{1} $ the following conditions are equivalent:
	\begin{enumerate}
		\item $ S_{\pmb{\lambda}_{T}} $ is $ m $-isometric,
		\item for every $ f\in \ell^{2}(V_{T}) $ there exists a polynomial $ p_{f}\in\mathbb{R}_{m-1}[x] $ satisfying
		\begin{equation*}
			p_{f}(n) = \lVert S_{\pmb{\lambda}_{T}}^{n}f\rVert^{2}, \qquad n\in\mathbb{N},
		\end{equation*} 
		\item for every $ v\in V_{T} $ there exists a polynomial $ p_{v}\in\mathbb{R}_{m-1}[x] $ satisfying
		\begin{equation}
			\label{FormSquareNormPolynomial}
			p_{v}(n) = \lVert S_{\pmb{\lambda}_{T}}^{n}e_{v}\rVert^{2}, \qquad n\in\mathbb{N}.
		\end{equation}
	\end{enumerate}
\end{lemma}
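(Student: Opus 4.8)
The plan is to prove the cycle of implications (i) $\Rightarrow$ (ii) $\Rightarrow$ (iii) $\Rightarrow$ (ii) $\Rightarrow$ (i), where (ii) $\Rightarrow$ (iii) is immediate by specializing $f=e_v$. The equivalence (i) $\Leftrightarrow$ (ii) is essentially the Agler--Stankus characterization of $m$-isometries and runs parallel to \cite[Proposition 2.5]{kosmiderMisometricCompositionOperators2021}; the genuinely new content is the reduction (iii) $\Rightarrow$ (ii), that is, passing from basis vectors to arbitrary $f\in\ell^2(V_T)$, and it is here that the special geometry of $G_T$ enters.

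For (i) $\Leftrightarrow$ (ii), abbreviate $S=S_{\pmb{\lambda}_T}$, put $Q_n=S^{\ast n}S^n$ and $\beta_f(n)=\langle Q_nf,f\rangle=\lVert S^nf\rVert^2$, and set $\beta_m(S):=\sum_{j=0}^m(-1)^{m-j}\binom{m}{j}S^{\ast j}S^j$. The starting point is the operator identity obtained by expanding the $m$-th forward difference of $(Q_n)_n$:
\[
\sum_{j=0}^m(-1)^{m-j}\binom{m}{j}Q_{n+j}=S^{\ast n}\Bigl(\sum_{j=0}^m(-1)^{m-j}\binom{m}{j}S^{\ast j}S^j\Bigr)S^n,
\]
so that $(\Delta^m\beta_f)(n)=\langle\beta_m(S)S^nf,S^nf\rangle$. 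If $S$ is $m$-isometric, then $\beta_m(S)=0$ forces $\Delta^m\beta_f\equiv0$, and the truncated Newton formula $\beta_f(n)=\sum_{j=0}^{m-1}\binom{n}{j}(\Delta^j\beta_f)(0)$ exhibits $\beta_f$ as the restriction to $\mathbb{N}$ of a polynomial of degree $\le m-1$, giving (ii). Conversely, if each $\beta_f$ is such a polynomial, then $(\Delta^m\beta_f)(0)=\langle\beta_m(S)f,f\rangle=0$ for every $f$; since $\beta_m(S)$ is self-adjoint, this yields $\beta_m(S)=0$, which is (i).

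For (iii) $\Rightarrow$ (ii), the crucial point is that the orthogonality of Observation \ref{ObsPreservingOrthogonality} persists under all powers: for distinct $u,v\in V_T$ and every $n\in\mathbb{N}$ one has $S^ne_u\perp S^ne_v$. Indeed $S^ne_w$ is supported on the preimage $(T^n)^{-1}(\{w\})$, and for $u\neq v$ the sets $(T^n)^{-1}(\{u\})$ and $(T^n)^{-1}(\{v\})$ are disjoint, so the argument of Observation \ref{ObsPreservingOrthogonality} applies verbatim (alternatively one induces on $n$). Writing $f=\sum_vc_ve_v$, the Pythagorean theorem then gives $\lVert S^nf\rVert^2=\sum_v\lvert c_v\rvert^2\lVert S^ne_v\rVert^2=\sum_v\lvert c_v\rvert^2 p_v(n)$, a series of polynomials $\lvert c_v\rvert^2 p_v\in\mathbb{R}_{m-1}[x]$. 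To invoke Lemma \ref{LemSeriesOfPolynomials} I must verify that each coefficient series converges absolutely. The coefficients of $p_v$ are fixed linear combinations $a_k^{(v)}=\sum_{j=0}^{m-1}\gamma_{k,j}\,p_v(j)$ of its values at the sample points $0,\dots,m-1$, where the constants $\gamma_{k,j}$ arise from the inverse Vandermonde matrix and do not depend on $v$; since $p_v(j)=\lVert S^je_v\rVert^2\ge0$ and $\sum_v\lvert c_v\rvert^2 p_v(j)=\lVert S^jf\rVert^2<\infty$, one obtains $\sum_v\lvert c_v\rvert^2\lvert a_k^{(v)}\rvert\le\sum_{j=0}^{m-1}\lvert\gamma_{k,j}\rvert\,\lVert S^jf\rVert^2<\infty$. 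Lemma \ref{LemSeriesOfPolynomials} (applied with parameter $m$, the extraneous top coefficient vanishing automatically) then produces a polynomial $p_f\in\mathbb{R}_{m-1}[x]$ with $p_f(n)=\lVert S^nf\rVert^2$ for all $n$, which is (ii).

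I expect the main obstacle to be the convergence bookkeeping in (iii) $\Rightarrow$ (ii): one cannot sum the polynomials $\lvert c_v\rvert^2 p_v$ coefficientwise without first knowing the coefficient series converge, and the only a priori datum is pointwise convergence of $\sum_v\lvert c_v\rvert^2 p_v(n)$ at $n\in\mathbb{N}$. The non-negativity of the values $\lVert S^je_v\rVert^2$ combined with finite interpolation at $m$ sample points is precisely what upgrades this pointwise control to absolute convergence of the coefficients, after which Lemma \ref{LemSeriesOfPolynomials} finishes the argument.
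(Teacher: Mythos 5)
Your proposal is correct and follows essentially the same route as the paper: the nontrivial implication (iii)~$\Rightarrow$~(ii) is handled exactly as in the text, via the orthogonality $S^n e_u\perp S^n e_v$, the bound on the coefficients of $p_v$ obtained from interpolation at the sample points $0,\dots,m-1$ (the paper's Observation~\ref{ObsCoeffsEstimation}, via Cramer's rule), and Lemma~\ref{LemSeriesOfPolynomials}. The only difference is cosmetic: you write out the standard difference-operator proof of (i)~$\Leftrightarrow$~(ii), which the paper simply cites from Agler--Stankus.
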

Before we state the proof we need the following observation.
\begin{observation}
	\label{ObsCoeffsEstimation}
	Suppose $ T\!: X\to X $ is such that $ G_{T} $ is connected and such that $ h_{T}\in L^{\infty}(\mu) $. Let $ m\in\mathbb{N}_{1} $. Let $ v\in V_{T} $ and suppose that $ p_{v}(x) = \sum_{i=0}^{m-1}a_{i}x^{i}\in \mathbb{R}_{m-1}[x] $ is polynomial satisfying \eqref{FormSquareNormPolynomial}. Then there exists a constant $ C \in [0,\infty) $ depending only on $ m $ and $ S_{\pmb{\lambda}_{T}} $ such that $ \lvert a_{i}\rvert \le C $ for $ i\in \mathbb{N}\cap[0,m-1] $.
\end{observation}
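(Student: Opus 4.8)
The plan is to extract the coefficients $a_i$ from finitely many values of the polynomial $p_v$ and then bound those values uniformly in $v$. Since $p_v$ has degree at most $m-1$, it is completely determined by its values at the $m$ points $n = 0, 1, \ldots, m-1$. Concretely, I would use the Vandermonde/Lagrange interpolation: there is a fixed matrix $M \in \mathbb{R}^{m \times m}$ (the inverse of the Vandermonde matrix on the nodes $0, 1, \ldots, m-1$), depending only on $m$, such that
\begin{equation*}
	a_i = \sum_{n=0}^{m-1} M_{i,n}\, p_v(n) = \sum_{n=0}^{m-1} M_{i,n}\, \lVert S_{\pmb{\lambda}_{T}}^{n} e_{v}\rVert^{2}, \qquad i \in \mathbb{N}\cap[0,m-1].
\end{equation*}
The entries $M_{i,n}$ are absolute constants once $m$ is fixed, so the whole problem reduces to bounding $\lVert S_{\pmb{\lambda}_{T}}^{n} e_{v}\rVert^{2}$ independently of the vertex $v$.

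The key point is that $S_{\pmb{\lambda}_{T}}$ is bounded, by the standing assumption $h_{T} \in L^{\infty}(\mu)$. Hence for every $n$ and every $v$ we have $\lVert S_{\pmb{\lambda}_{T}}^{n} e_{v}\rVert \le \lVert S_{\pmb{\lambda}_{T}}\rVert^{n} \lVert e_{v}\rVert = \lVert S_{\pmb{\lambda}_{T}}\rVert^{n}$, since $\lVert e_{v}\rVert = 1$. Writing $B = \max\{1, \lVert S_{\pmb{\lambda}_{T}}\rVert\}$, this gives $\lVert S_{\pmb{\lambda}_{T}}^{n} e_{v}\rVert^{2} \le B^{2(m-1)}$ uniformly for all $n \in \mathbb{N}\cap[0,m-1]$ and all $v \in V_{T}$. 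Combining with the interpolation identity,
\begin{equation*}
	\lvert a_i\rvert \le \sum_{n=0}^{m-1} \lvert M_{i,n}\rvert \cdot B^{2(m-1)} \le \left( \max_{i,n} \lvert M_{i,n}\rvert \right) m\, B^{2(m-1)} =: C,
\end{equation*}
and $C$ depends only on $m$ and on $\lVert S_{\pmb{\lambda}_{T}}\rVert$ (hence only on $m$ and $S_{\pmb{\lambda}_{T}}$), as required.

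I do not anticipate a genuine obstacle here; the statement is essentially a uniform-boundedness remark that will be used later. The only point requiring minor care is to make explicit that the interpolation coefficients $M_{i,n}$ depend on $m$ alone and not on $v$, which is automatic since the interpolation nodes $0, 1, \ldots, m-1$ are fixed once $m$ is chosen. One could equally phrase the argument via finite differences of the sequence $(\lVert S_{\pmb{\lambda}_{T}}^{n} e_{v}\rVert^{2})_{n}$ instead of Lagrange interpolation, but the Vandermonde route makes the uniformity in $v$ most transparent.
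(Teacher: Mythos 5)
Your proposal is correct and follows essentially the same route as the paper: both invert the Vandermonde system at the nodes $0,1,\ldots,m-1$ (the paper via Cramer's rule, you via the inverse matrix) to write each $a_i$ as a fixed linear combination of $\lVert S_{\pmb{\lambda}_{T}}^{n}e_{v}\rVert^{2}$, and then bound these norms uniformly in $v$ by powers of the operator norm.
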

\begin{proof}
	First, note that the coefficient $ a_{0},\ldots,a_{m-1} $ form the unique solution of the following system of linear equations\footnote{We stick to the convention that $ 0^{0} = 1 $.}:
	\begin{equation*}
		\begin{bmatrix}
			0^{0} & 0^{1} & \ldots & 0^{m-1}\\
			1^{0} & 1^{1} & \ldots & 1^{m-1}\\
			\vdots & \vdots & \vdots & \vdots\\
			(m-1)^{0} & (m-1)^{1} & \ldots & (m-1)^{m-1}
		\end{bmatrix} \begin{bmatrix}
			a_{0}\\
			a_{1}\\
			\vdots\\
			a_{m-1}
		\end{bmatrix} = \begin{bmatrix}
			\lVert S_{\pmb{\lambda}_{T}}^{0}e_{v}\rVert^{2}\\
			\lVert S_{\pmb{\lambda}_{T}}^{1}e_{v}\rVert^{2}\\
			\vdots\\
			\lVert S_{\pmb{\lambda}_{T}}^{m-1}e_{v}\rVert^{2}\\
		\end{bmatrix}.
	\end{equation*}
	Applying Cramer's rule (see \cite[Theorem 3.36.(7)]{nairLinearAlgebra2018}) to the above system of linear equations, we obtain that there exist constants $ M_{\ell,j,m}\in \mathbb{R} $ ($ j,\ell\in \mathbb{N}\cap[0,m-1] $) depending on $ j,\ell $ (not on the operator $ S_{\pmb{\lambda}_{T}} $) satisfying
	\begin{equation*}
		a_{\ell} = \sum_{j=0}^{m-1} M_{\ell,j,m}\lVert S_{\pmb{\lambda}_{T}}^{j}e_{v}\rVert^{2}, \qquad \ell\in \mathbb{N}\cap[0,m-1].
	\end{equation*}
	Then, for every $ \ell\in \mathbb{N}\cap[0,m-1] $
	\begin{equation*}
		\lvert a_{\ell}\rvert \le \sum_{j=0}^{m-1} \lvert M_{\ell,j,m}\rvert\cdot \lVert S_{\pmb{\lambda}_{T}}^{j}e_{v}\rVert^{2} \le \sum_{j=0}^{m-1} \lvert M_{\ell,j,m}\rvert\cdot \lVert S_{\pmb{\lambda}_{T}}^{j}\rVert^{2}.
	\end{equation*}
	Hence, the constant
	\begin{equation*}
		C = \sup_{0\le \ell \le m-1} \sum_{j=0}^{m-1} \lvert M_{i,j,m}\rvert\cdot \lVert S_{\pmb{\lambda}_{T}}^{j}\rVert^{2}
	\end{equation*}
	satisfies $ \lvert a_{\ell}\rvert \le C $. Obviously, $ C $ depends only on $ m $ and $ S_{\pmb{\lambda}_{T}} $.
\end{proof}
\begin{proof}[Proof of Lemma \ref{LemGeneralCharacterizationOfMIsometries}]
	The equivalence of (i) and (ii) follows from remarks in \cite[p. 389]{aglerMisometricTransformationsHilbert1995}; the implication (ii)$ \implies $(iii) obviously holds. It remains to prove that (iii) implies (ii). Enumerate $ V_{T} = \{v_{j}\}_{j=0}^{N} $, where $ N\in \mathbb{N}\cup \{\infty\} $. Let $ f = \sum_{j=0}^{N} \alpha_{v_{j}}e_{v_{j}} \in \ell^{2}(V_{T}) $. Then, by Observation \ref{ObsPreservingOrthogonality},
	\begin{equation}
		\label{ProofFormSquareNormAnyVector}
		\lVert S_{\pmb{\lambda}_{T}}^{k}f\rVert^{2} = \sum_{j=0}^{N} \lvert \alpha_{v_{j}}\rvert^{2} \lVert S_{\pmb{\lambda}_{T}}^{k}e_{v_{j}}\rVert^{2}, \qquad k\in \mathbb{N}.
	\end{equation} For $ v\in V_{T} $ let $ p_{v} = \sum_{\ell=0}^{m-1}a_{\ell}^{v}x^{\ell} \in \mathbb{R}_{m-1}[x] $ be the polynomial satisfying \eqref{FormSquareNormPolynomial}. Let $ C $ be the constant given by Observation \ref{ObsCoeffsEstimation}. Then, for every $ \ell\in \mathbb{N}\cap[0,m-1] $,
	\begin{equation*}
		\sum_{j=0}^{N}\lvert \alpha_{v_{j}}\rvert^{2} \cdot \lvert a_{\ell}^{v_{j}}\rvert \le C\sum_{j=0}^{N}\lvert \alpha_{v_{j}}\rvert^{2}.
	\end{equation*}
	Hence, the series $ \sum_{j=0}^{N}\lvert \alpha_{v_{j}}\rvert^{2} \cdot a_{\ell}^{v_{j}} $ is absolutely convergent for every $ \ell\in \mathbb{N}\cap[0,m-1] $. By Lemma \ref{LemSeriesOfPolynomials},
	\begin{equation*}
		p_{f}(x) := \sum_{j=0}^{N} \lvert \alpha_{v_{j}}\rvert^{2}p_{v_{j}}(x) \in \mathbb{R}_{m-1}[x].
	\end{equation*}
	From \eqref{ProofFormSquareNormAnyVector} and \eqref{FormSquareNormPolynomial} it follows that $ p_{f}(n) = \lVert S_{\pmb{\lambda}_{T}}^{n}f\rVert^{2} $ for every $ n\in \mathbb{N} $.
\end{proof}
In the next results we characterize $ m $-isometric composition operators such that the graph $ G_{T} $ has one cycle. Let us begin with the following lemma.
\begin{lemma}
	\label{LemPolynomialOneStepBack}
 	Suppose $ T\!: X\to X $ is such that $ G_{T} $ is connected and satisfies Theorem \ref{ThmClassificationOfGraphs}.(ii). Assume $ h_{T}\in L^{\infty}(\mu) $ and let $ m\in\mathbb{N}_{2} $. Suppose that for every $ R\in \mathcal{R} $ there exists a polynomial $ p_{\omega_{R}}\in \mathbb{R}_{m-1}[x] $ satisfying \eqref{FormSquareNormPolynomial} with $ v = \omega_{R} $. Then for every $ k\in \mathbb{N}\cap[0,\kappa-1] $ the formula
	\begin{equation*}
		q_{k}(x):= \sum_{\substack{R\in \mathcal{R}\\i_{R}=k}} \lambda_{\omega_{R}}^{2} p_{\omega_{R}}(x), \qquad x\in \mathbb{R},
	\end{equation*} 
	defines a polynomial of degree at most $ m-1 $. Moreover,
	\begin{equation}
		\label{FormMaxDegree}
		\deg q_{k} = \max\{\deg p_{\omega_{R}}\!: R\in \mathcal{R}, \ i_{R} = k\}, \qquad k\in \mathbb{N}\cap[0,\kappa-1].
	\end{equation}
\end{lemma}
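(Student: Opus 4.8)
The plan is to recognize the defining sum of $q_k$ as a (generally infinite) series of polynomials of degree at most $m-1$ and to feed it into Lemma \ref{LemSeriesOfPolynomials}. Fix $k\in\mathbb{N}\cap[0,\kappa-1]$ and write $p_{\omega_R}(x)=\sum_{\ell=0}^{m-1}a_\ell^{\omega_R}x^\ell$ for each $R\in\mathcal{R}$ with $i_R=k$. The index set $\{R\in\mathcal{R}\!:i_R=k\}$ is at most countable, so after enumerating it the family $(\lambda_{\omega_R}^2 p_{\omega_R})$ becomes a sequence in $\mathbb{R}_{m-1}[x]$, and the whole first assertion will follow at once from Lemma \ref{LemSeriesOfPolynomials} provided I can check its hypothesis: that for each fixed $\ell$ the coefficient series $\sum_{R\!:i_R=k}\lambda_{\omega_R}^2 a_\ell^{\omega_R}$ converges absolutely. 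Thus the real content of the first part is a convergence estimate, and the polynomiality, degree bound, and the formula $b_\ell=\sum_{R\!:i_R=k}\lambda_{\omega_R}^2 a_\ell^{\omega_R}$ for the coefficients of $q_k$ come for free.

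To secure that convergence I would combine two uniform bounds. First, Observation \ref{ObsCoeffsEstimation} furnishes a constant $C$, independent of $R$, with $\lvert a_\ell^{\omega_R}\rvert\le C$ for all $\ell$ and all $R$, so that
\[
	\sum_{\substack{R\in\mathcal{R}\\ i_R=k}}\lambda_{\omega_R}^2\lvert a_\ell^{\omega_R}\rvert \le C\sum_{\substack{R\in\mathcal{R}\\ i_R=k}}\lambda_{\omega_R}^2 .
\]
Second, the remaining weight sum is finite by boundedness of $h_T$: the condition $i_R=k$ means $(v_k,\omega_R)\in E_T$, i.e.\ $T\omega_R=v_k$, so every such root lies in $\{v\in V_T\!:Tv=v_k\}$, giving
\[
	\sum_{\substack{R\in\mathcal{R}\\ i_R=k}}\lambda_{\omega_R}^2 \le \sum_{\substack{v\in V_T\\ Tv=v_k}}\lambda_v^2 \le \sup_{u\in V_T}\sum_{\substack{v\in V_T\\ Tv=u}}\lambda_v^2 <\infty,
\]
the last inequality being exactly the equivalent form of $h_T\in L^\infty(\mu)$ recorded in Section 4. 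This delivers the absolute convergence, and Lemma \ref{LemSeriesOfPolynomials} then gives $q_k\in\mathbb{R}_{m-1}[x]$.

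For the degree formula \eqref{FormMaxDegree} I set $d=\max\{\deg p_{\omega_R}\!:i_R=k\}$, noting the maximum is attained since the degrees lie in the finite set $\{0,\ldots,m-1\}$ (if $\{R\!:i_R=k\}=\varnothing$ then $q_k\equiv0$ and both sides agree under the usual conventions). The decisive step — and the one I expect to be the main obstacle, since everything else is bookkeeping — is to rule out cancellation in the top coefficient $b_d=\sum_{R\!:\,i_R=k,\ \deg p_{\omega_R}=d}\lambda_{\omega_R}^2 a_d^{\omega_R}$. The plan here is to use positivity: because $p_{\omega_R}(n)=\lVert S_{\pmb{\lambda}_T}^n e_{\omega_R}\rVert^2\ge 0$ for all $n\in\mathbb{N}$ and $p_{\omega_R}(0)=\lVert e_{\omega_R}\rVert^2=1\neq 0$, each $p_{\omega_R}$ is a nonzero polynomial that cannot tend to $-\infty$, so its leading coefficient is strictly positive. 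Hence $b_\ell=0$ for $\ell>d$ while $b_d$ is a convergent sum of strictly positive terms, so $b_d>0$ and therefore $\deg q_k=d$. In short, the only place where genuine analysis (rather than the uniform coefficient control of Observation \ref{ObsCoeffsEstimation} and the summation machinery of Lemma \ref{LemSeriesOfPolynomials}) is needed is this positivity of the leading coefficients, which forbids any loss of degree when the $p_{\omega_R}$ are superposed with positive weights.
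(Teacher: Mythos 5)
Your proposal is correct and follows essentially the same route as the paper: uniform coefficient bounds from Observation \ref{ObsCoeffsEstimation}, finiteness of $\sum_{i_R=k}\lambda_{\omega_R}^2$ (you get it from the $L^\infty$-bound on $h_T$, the paper reads it off $\lVert S_{\pmb{\lambda}_T}e_{v_k}\rVert^2$ --- the same fact), Lemma \ref{LemSeriesOfPolynomials} for membership in $\mathbb{R}_{m-1}[x]$, and positivity of the leading coefficients to prevent degree loss in \eqref{FormMaxDegree}. Your justification of that last positivity step (nonzero since $p_{\omega_R}(0)=1$, non-negative on $\mathbb{N}$, hence strictly positive leading coefficient) is in fact slightly more careful than the paper's.
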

\begin{proof}
	Let $ k\in \mathbb{N}\cap[0,\kappa-1] $. Enumerate $ \{R\in \mathcal{R}\!: i_{R} = k\} = \{R_{j}\}_{j=0}^{N} $, where $ N\in \mathbb{N}\cup\{\infty\} $ is the cardinality of the set on the left hand side. Write $ p_{\omega_{R}}(x) = \sum_{i=0}^{m-1}a_{i}^{R}x^{i} $, $ x \in \mathbb{R} $, $ R\in \mathcal{R} $. We will check that the series
	\begin{equation}
		\label{ProoFormCoeffsSeries}
		\sum_{j=0}^{N} \lambda_{\omega_{R_{j}}}^{2} a_{i}^{R_{j}}
	\end{equation}
	is absolutely convergent for every $ i\in \mathbb{N}\cap[0,m-1] $. Since, by Observation \ref{ObsPreservingOrthogonality}, for every $ \ell\in \mathbb{N}\cap[0,m-1] $,
	\begin{equation*}
		\lVert S_{\pmb{\lambda}_{T}}e_{v_{k}}\rVert^{2} = \lambda_{k+1\mod \kappa}^{2} + \sum_{j=0}^{N}\lambda_{\omega_{R_{j}}}^{2},
	\end{equation*}
	it follows that the series $ \sum_{j=0}^{N}\lambda_{\omega_{R_{j}}}^{2} $ is convergent. Using Observation \ref{ObsCoeffsEstimation}, we obtain that the series $ \sum_{j=0}^{N} \lambda_{\omega_{R_{j}}}^{2} a_{i}^{R_{j}} $ is absolutely convergent. By Lemma \ref{LemSeriesOfPolynomials}, $ q_{k} \in \mathbb{R}_{m-1}[x] $. Since $ p_{\omega_{R}}(n) \ge 0 $ for every $ n\in \mathbb{N} $, the leading coefficient of $ p_{\omega_{R}} $ is non-negative for every $ R\in \mathcal{R} $. Hence, \eqref{FormMaxDegree} holds. This completes the proof.
\end{proof}
It turns out that if $ S_{\pmb{\lambda}_{T}} $ is $ m $-isometric, then on 'tree parts' of $ G_{T} $ it is actually $ (m-1) $-isometric. The similar result is presented in \cite[Theorem 2.10]{kosmiderMisometricCompositionOperators2021} for composition operators such that the graph $ G_{T} $ has one cycle and one branching point; however, there is no clear way how to adjust the proof in a way to obtain the same conclusion in a more general situation. In our proof we use a different approach to prove this result in full generality.
\begin{lemma}
	\label{LemOnTreesM-1Isometric}
	Suppose $ T\!: X\to X $ is such that $ G_{T} $ is connected and satisfies the condition as in Theorem \ref{ThmClassificationOfGraphs}.(ii). Assume $ h_{T}\in L^{\infty}(\mu) $ and let $ m\in\mathbb{N}_{2} $. Suppose $ S_{\pmb{\lambda}_{T}}\in \mathbf{B}(\ell^{2}(V_{T})) $ is $ m $-isometric. Then for every $ v\in V_{T}\setminus\{v_{0},\ldots,v_{\kappa-1}\} $ the polynomial $ p_{v} $ satisfying \eqref{FormSquareNormPolynomial} is of degree at most $ m-2 $.
\end{lemma}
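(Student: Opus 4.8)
The plan is to reduce the statement to a degree count on the polynomials $ p_v $, whose existence for every $ v\in V_T $ is guaranteed by the $ m $-isometricity of $ S_{\pmb{\lambda}_T} $ via Lemma~\ref{LemGeneralCharacterizationOfMIsometries}.(iii). I would split the argument into two stages: first treat the roots $ \omega_R $ ($ R\in\mathcal{R} $), and then propagate the bound down each tree. For the first stage the engine is the recursion \eqref{FormRecursiveNormOfSquare}. Applying $ \lVert\,\cdot\,\rVert^{2} $ to both sides, I would observe that all vectors on the right-hand side are pairwise orthogonal: each $ S^{a}_{\pmb{\lambda}_T}e_{w} $ is supported on the fibre $ T^{-a}(\{w\}) $, and these fibres are disjoint by the cycle/tree dichotomy of Theorem~\ref{ThmClassificationOfGraphs}.(ii) (two of the supports could meet only if some root equalled $ T^{j}(v_m) $ or $ T^{j}(\omega_{R'}) $ for some $ j\ge 0 $, but iterating $ T $ from a cycle vertex stays on the cycle, and iterating $ T $ at least once from a root lands on the cycle since $ T\omega_R = v_{i_R} $, so a root is never produced). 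This turns \eqref{FormRecursiveNormOfSquare} into the scalar identity
\begin{equation*}
	p_{v_m}(x+\kappa) - p_{v_m}(x) = \sum_{i=0}^{\kappa-1} \Lambda_{m,i}^{2}\, q_{(i+m)\mod \kappa}(x+\kappa-i-1), \qquad m\in\mathbb{N}\cap[0,\kappa-1],
\end{equation*}
valid first for $ x=n\in\mathbb{N} $ and hence as a polynomial identity, where $ q_k $ is the polynomial from Lemma~\ref{LemPolynomialOneStepBack}.

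The crux of the first stage is a \emph{no-cancellation} argument. Since $ p_{v_m}\in\mathbb{R}_{m-1}[x] $, Observation~\ref{ObsLoweringDegree} shows the left-hand side has degree at most $ m-2 $. On the right-hand side, as $ i $ runs through $ \mathbb{N}\cap[0,\kappa-1] $ the index $ (i+m)\mod\kappa $ runs bijectively through $ \mathbb{N}\cap[0,\kappa-1] $, so each $ q_k $ occurs exactly once, and shifting its argument does not change its degree. Crucially, every $ q_k $ has non-negative leading coefficient, because $ q_k(n)=\sum_{i_R=k}\lambda_{\omega_R}^{2}\lVert S^{n}_{\pmb{\lambda}_T}e_{\omega_R}\rVert^{2}\ge 0 $ for all $ n\in\mathbb{N} $, while each $ \Lambda_{m,i}^{2}>0 $. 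Hence, with $ D=\max_k \deg q_k $, the coefficient of $ x^{D} $ on the right-hand side is a sum of strictly positive terms and cannot vanish, so the right-hand side has degree exactly $ D $. Comparing with the left-hand side gives $ D\le m-2 $, and then \eqref{FormMaxDegree} yields $ \deg p_{\omega_R}\le m-2 $ for every $ R\in\mathcal{R} $.

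For the second stage I would propagate this bound through each tree $ (R,E_R) $ using its induction principle (Lemma~\ref{LemInductionOnTree}). For any $ u\in R $, orthogonality (now just $ S^{n}_{\pmb{\lambda}_T}e_{w}\perp S^{n}_{\pmb{\lambda}_T}e_{w'} $ for distinct children, Observation~\ref{ObsPreservingOrthogonality}) gives $ \lVert S^{n+1}_{\pmb{\lambda}_T}e_{u}\rVert^{2}=\sum_{(u,w)\in E_T}\lambda_{w}^{2}\lVert S^{n}_{\pmb{\lambda}_T}e_{w}\rVert^{2} $, i.e. $ p_{u}(x+1)=\sum_{(u,w)\in E_T}\lambda_{w}^{2}p_{w}(x) $; note that all out-neighbours $ w $ of a tree vertex again lie in $ R $. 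If $ \deg p_{u}\le m-2 $, then since each $ p_{w} $ has non-negative leading coefficient and each $ \lambda_{w}^{2}>0 $, the same no-cancellation argument forces $ \deg p_{w}\le m-2 $ for every child $ w $. Starting from the root (first stage) and invoking Lemma~\ref{LemInductionOnTree} in each $ R\in\mathcal{R} $ then yields $ \deg p_{v}\le m-2 $ for all $ v\in V_T\setminus\{v_0,\ldots,v_{\kappa-1}\} $. The main obstacle is the first stage: converting the vector recursion \eqref{FormRecursiveNormOfSquare} into a clean scalar identity requires the orthogonality bookkeeping described above, and the degree extraction hinges on the non-negativity of the leading coefficients, which is precisely what rules out cancellation among the shifted polynomials $ q_k $.
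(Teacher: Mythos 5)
Your proof is correct and follows essentially the same route as the paper: the cycle recursion \eqref{FormRecursiveNormOfSquare} combined with Observation~\ref{ObsLoweringDegree} forces the degree drop at the roots $\omega_R$ (the no-cancellation step you spell out is exactly what the paper packages into the ``moreover'' part \eqref{FormMaxDegree} of Lemma~\ref{LemPolynomialOneStepBack}), after which the bound is propagated down each tree via the same one-step identity and Lemma~\ref{LemInductionOnTree}. The only difference is presentational: you make explicit the orthogonality/disjoint-support bookkeeping needed to pass from the vector recursion to the scalar one, which the paper leaves implicit.
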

\begin{proof}
	First, we prove that for every $ R\in \mathcal{R} $, the polynomial $ p_{\omega_{R}} $ satisfying \eqref{FormSquareNormPolynomial} with $ v = \omega_{R} $ is of degree at most $ m-2 $. By Lemma \ref{LemGeneralCharacterizationOfMIsometries}, there exist $ p_{v_{0}}\in\mathbb{R}_{m-1}[x] $ and $ p_{\omega_{R}}\in\mathbb{R}_{m-1}[x] $ ($ R\in\mathcal{R} $) satisfying \eqref{FormSquareNormPolynomial}. Combining \eqref{FormSquareNormPolynomial} and \eqref{FormRecursiveNormOfSquare} we obtain that for every $ n\in \mathbb{N} $,
	\begin{equation}
		\label{FormRecursivePolynomialOnCycle}
		p_{v_{0}}(n+\kappa)-p_{v_{0}}(n) = \sum_{i = 0}^{\kappa-1} \Lambda_{0,i}^{2}\sum_{\substack{R\in\mathcal{R}\\i_{R} = i}} \lambda_{\omega_{R}}^{2}p_{\omega_{R}}(n+\kappa-i-1).
	\end{equation}
	From Observation \ref{ObsLoweringDegree} we deduce that the polynomial $ q(\,\cdot\,) = p_{v_{0}}(\,\cdot\, + \kappa) - p_{v_{0}}(\,\cdot\,) $ is of degree less than or equal to $ m-2 $. By Lemma \ref{LemPolynomialOneStepBack}, the formula
	\begin{equation*}
		r(x) = \sum_{i = 0}^{\kappa-1} \Lambda_{0,i}^{2}\sum_{\substack{R\in\mathcal{R}\\i_{R} = i}} \lambda_{\omega_{R}}^{2}p_{\omega_{R}}(x+\kappa-i-1), \qquad x\in \mathbb{R}.
	\end{equation*}
	defines a polynomial of degree at most $ m-1 $. We infer from \eqref{FormRecursivePolynomialOnCycle} that $ q = r $ and, consequently, $ r \in \mathbb{R}_{m-2}[x] $. From Lemma \ref{LemPolynomialOneStepBack} it follows that $ p_{\omega_{R}} \in \mathbb{R}_{m-2}[x] $. Next, if $ R\in \mathcal{R} $ and $ v\in \mathcal{R} $ is such that $ p_{v}\in \mathbb{R}_{m-2}[x] $, then proceeding as in the proof of Lemma \ref{LemPolynomialOneStepBack}, we obtain that
	\begin{equation*}
		q_{v}(x) := \sum_{\substack{u\in V_{T}}\\(v,u)\in E_{T}} \lambda_{u}^{2}p_{u}(x-1), \qquad x\in \mathbb{R},
	\end{equation*}
	is the polynomial of degree at most $ m-1 $. For every $ n\in \mathbb{N}_{1} $,
	\begin{equation*}
		q_{v}(n) = \sum_{\substack{u\in V_{T}}\\(v,u)\in E_{T}} \lambda_{u}^{2}p_{u}(n-1) = \sum_{\substack{u\in V_{T}}\\(v,u)\in E_{T}} \lambda_{u}^{2}\lVert S_{\pmb{\lambda}_{T}}^{n-1}e_{u}\rVert^{2} = \lVert S_{\pmb{\lambda}_{T}}^{n}e_{v}\rVert^{2}.
	\end{equation*}
	Hence, $ q_{v} = p_{v} $. Since for every $ u\in V_{T} $ the leading coefficient of $ p_{u} $ is non-negative, it follows that $ \deg p_{u} \le \deg p_{v} $. Therefore, $ p_{u}\in \mathbb{R}_{m-2}[x] $. The application of Lemma \ref{LemInductionOnTree} completes the proof.
\end{proof}
Let us introduce the following notation: for $ m\in \mathbb{N}_{2} $ and $ \kappa\in \mathbb{N}_{1} $ we set
\begin{equation*}
	A_{m,\kappa} = \begin{bmatrix}
		d_{1,1} & 0 & 0 & \ldots & 0 & 0\\
		d_{1,2} & d_{2,1} & 0 & \ldots & 0 & 0 \\
		\vdots & \vdots & \vdots & \vdots & \vdots \\
		d_{1,m-1} & d_{2,m-2} & d_{3,m-3} & \ldots & d_{m-1,1} & 0\\
		0 & 0 & 0 & \ldots & 0 & 1
		\end{bmatrix},
\end{equation*}
where $ d_{i,j} = \binom{m-i}{m-j-i}\kappa^{j} $, $ i,j\in \mathbb{N} $, $ j+i\le m $, and if $ \kappa > 1 $,
\begin{equation*}
	B_{m,\kappa} = \begin{bmatrix}
		1^{m-1} & 1^{m-2} & \ldots & 1^{0}\\
		2^{m-1} & 2^{m-2} & \ldots & 2^{0}\\
		\vdots & \vdots & \vdots & \vdots\\
		(\kappa-1)^{m-1} & (\kappa-1)^{m-2} & \ldots & (\kappa-1)^{0} 
		\end{bmatrix}.
\end{equation*}
Now we are in the position to state the main result of this section.
\begin{theorem}
	\label{ThmMIsometricCompositionOperatorsOneCycle}
	Suppose $ T\!: X\to X $ is such that $ G_{T} $ is connected and satisfies Theorem \ref{ThmClassificationOfGraphs}.(ii). Assume $ h_{T}\in L^{\infty}(\mu) $. For $ m\in\mathbb{N}_{2} $ the following conditions are equivalent:
	\begin{enumerate}
		\item $ S_{\pmb{\lambda}_{T}} $ is $ m $-isometric,
		\item for every $ v\in V_{T}\setminus\{v_0,\ldots,v_{\kappa-1} \} $ there exists the unique polynomial $ p_{v}\in \mathbb{R}_{m-2}[x] $ satisfying \eqref{FormSquareNormPolynomial} and if $ \kappa > 1 $, $ m = \rank \tilde{A}_{m,\kappa} = \rank\tilde{B}_{m,\kappa} $, where
		\begin{equation*}
			\tilde{A}_{m,\kappa} = \begin{bmatrix}
				A_{m,\kappa} & b
			\end{bmatrix}, \tilde{B}_{m,\kappa} = \begin{bmatrix}
				A_{m,\kappa} & b\\
				B_{m,\kappa} & a
			\end{bmatrix},
		\end{equation*}
		\begin{equation*}
			b = \begin{bmatrix}
				b_{m-2} & b_{m-3} & \ldots & b_{0} & 1
			\end{bmatrix}^{T}, \quad  a = \begin{bmatrix}
				a_{1} & a_{2} & \ldots & a_{\kappa-1}
			\end{bmatrix}^{T},
		\end{equation*}
		$ a_{i} = \lVert S_{\pmb{\lambda}}^{i}e_{v_{0}}\rVert^{2} $ ($ i\in\mathbb{N} $, $ j\in\mathbb{N}\cap[1,m-i-1] $), and $ q(x) = \sum_{i=0}^{m-2} b_{i}x^{i} \in \mathbb{R}_{m-2}[x] $ is the polynomial defined by the formula\footnote{The fact that this formula defines a polynomial of degree at most $ m-2 $ follows from Lemma \ref{LemPolynomialOneStepBack}.}
		\begin{equation}
			\label{FormPolynomialP0MinusP0C}
			q(x) = \sum_{i = 0}^{\kappa-1} \Lambda_{0,i}^{2}\sum_{\substack{R\in\mathcal{R}\\i_{R} = i}} \lambda_{\omega_{R}}^{2}p_{\omega_{R}}(x+\kappa-i-1), \qquad x\in \mathbb{R}.
		\end{equation}
	\end{enumerate}
	Moreover, if (ii) holds, then $ p_{v_{0}}(x) = \sum_{j = 0}^{m-1} c_{j}x^{j} $, where $ c = (c_{m-1},\ldots,c_{0}) $ is the solution of 
	\begin{align}
		\label{FormLinearSystemRecursion}
		A_{m,\kappa}c = b
	\end{align}
\end{theorem}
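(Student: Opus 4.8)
The plan is to reduce, via Lemma \ref{LemGeneralCharacterizationOfMIsometries}(iii), the $m$-isometricity of $S_{\pmb{\lambda}_T}$ to the single requirement that for every vertex $v$ the sequence $(\lVert S_{\pmb{\lambda}_T}^n e_v\rVert^2)_n$ agrees with a real polynomial of degree at most $m-1$; abbreviate $s_n^{(v)} = \lVert S_{\pmb{\lambda}_T}^n e_v\rVert^2$. The tree part of (ii) is then essentially immediate: if $S_{\pmb{\lambda}_T}$ is $m$-isometric, Lemma \ref{LemOnTreesM-1Isometric} forces $p_v \in \mathbb{R}_{m-2}[x]$ for every $v \notin \{v_0,\dots,v_{\kappa-1}\}$, and uniqueness follows since a polynomial of degree at most $m-2$ is determined by its values at $0,1,\dots,m-2$; conversely the existence of such $p_v$ is exactly what (ii) supplies. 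Hence the whole argument turns on the behaviour along the cycle, and in particular on the vertex $v_0$.

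The analytic heart is the $\kappa$-step recursion
\begin{equation*}
	s_{n+\kappa}^{(v_0)} - s_n^{(v_0)} = q(n), \qquad n \in \mathbb{N},
\end{equation*}
with $q$ the polynomial of \eqref{FormPolynomialP0MinusP0C}; this is obtained exactly as \eqref{FormRecursivePolynomialOnCycle} in the proof of Lemma \ref{LemOnTreesM-1Isometric}, by applying \eqref{FormRecursiveNormOfSquare} with $m=0$ and using that inside each tree $R$ the vectors $S_{\pmb{\lambda}_T}^k e_{\omega_R}$ have pairwise disjoint supports, so the norm of the right-hand side splits as a sum of squares. I would then observe that a polynomial $p(x)=\sum_{j=0}^{m-1}c_j x^j$ satisfies $p(x+\kappa)-p(x)=q(x)$ together with $p(0)=c_0=1$ if and only if the reversed coefficient vector $c=(c_{m-1},\dots,c_0)$ solves $A_{m,\kappa}c=b$: expanding $(x+\kappa)^j-x^j$ by the binomial formula shows the coefficient of $x^\ell$ in $p(x+\kappa)-p(x)$ equals $\sum_{j=\ell+1}^{m-1}\binom{j}{\ell}\kappa^{j-\ell}c_j$, which is precisely the $\ell$-th row of $A_{m,\kappa}c$ under the stated reversal, while the bottom row $c_0=1$ records $p(0)=s_0^{(v_0)}$. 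Since $A_{m,\kappa}$ is (block) lower triangular with diagonal entries $(m-1)\kappa,(m-2)\kappa,\dots,\kappa,1$, it is invertible; thus $\rank\tilde{A}_{m,\kappa}=m$ automatically, $c=A_{m,\kappa}^{-1}b$ is the unique candidate, and this already yields the ``moreover'' assertion \eqref{FormLinearSystemRecursion}.

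It remains to compare this unique polynomial with the genuine norms at the other residues. Because $s_0^{(v_0)}=1$, the sequence $s^{(v_0)}$ coincides with $p_{v_0}$ on all of $\mathbb{N}$ if and only if $s_\ell^{(v_0)}=p_{v_0}(\ell)$ for $\ell=1,\dots,\kappa-1$ (then both sequences obey the same $\kappa$-step recursion and agree on a full period, so an induction closes the gap). Reading off $p_{v_0}(\ell)=\sum_j c_j\ell^j$, these $\kappa-1$ conditions are exactly $B_{m,\kappa}c=a$ with $a_\ell=s_\ell^{(v_0)}$. By the Rouché--Capelli theorem and the invertibility of $A_{m,\kappa}$, the combined system $A_{m,\kappa}c=b$, $B_{m,\kappa}c=a$ admits the (necessarily unique) solution $A_{m,\kappa}^{-1}b$ if and only if $\rank\tilde{B}_{m,\kappa}=m$. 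This establishes the equivalence at $v_0$: under $m$-isometry the true coefficient vector solves both systems, forcing both ranks to equal $m$; conversely the rank conditions produce $p_{v_0}\in\mathbb{R}_{m-1}[x]$ with $s_n^{(v_0)}=p_{v_0}(n)$ for all $n$.

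Finally I would propagate polynomiality from $v_0$ to the whole graph. Applying $S_{\pmb{\lambda}_T}$ to $e_{v_r}$ and using that distinct vertices have disjoint depth-$n$ reachable sets (a routine iterate of Observation \ref{ObsPreservingOrthogonality} built on Observation \ref{ObsIndegreeOfGraphUniquePathWithDistinctVertices}) gives the one-step relation
\begin{equation*}
	s_{n+1}^{(v_r)} = \lambda_{v_{(r+1)\mod\kappa}}^2\, s_n^{(v_{(r+1)\mod\kappa})} + \sum_{\substack{R\in\mathcal{R}\\ i_R = r}} \lambda_{\omega_R}^2\, p_{\omega_R}(n),
\end{equation*}
whose last sum is a polynomial of degree at most $m-2$ by Lemma \ref{LemPolynomialOneStepBack}. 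Solving for $s^{(v_{(r+1)\mod\kappa})}$ shows that if $s^{(v_r)}$ is polynomial of degree at most $m-1$ then so is $s^{(v_{(r+1)\mod\kappa})}$; running $r=0,1,\dots,\kappa-1$ from $v_0$ thus makes every cycle vertex polynomial of degree at most $m-1$, the tree vertices being already covered. An appeal to Lemma \ref{LemGeneralCharacterizationOfMIsometries}(iii) then gives the $m$-isometry. I expect the main obstacle to be twofold: matching the recursion to the precise entries and the reversed indexing of $A_{m,\kappa}$ and $B_{m,\kappa}$ (a careful but routine binomial computation), and isolating the propagation identity above, which is what allows a condition imposed only at $v_0$ to control $m$-isometricity of the entire operator.
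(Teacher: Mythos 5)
Your proposal is correct and follows essentially the same route as the paper: reduction via Lemma \ref{LemGeneralCharacterizationOfMIsometries}(iii), Lemma \ref{LemOnTreesM-1Isometric} for the tree vertices, translation of the $\kappa$-step recursion \eqref{FormRecursiveNormOfSquare} into the triangular system $A_{m,\kappa}c=b$ by binomial expansion, the Kronecker--Capelli criterion for the extended system with $B_{m,\kappa}$, and propagation of polynomiality from $v_{0}$ around the cycle (your one-step relation is just the single iterate of the paper's closed formula \eqref{FormNormOfSquareSmallPowers}). The only cosmetic difference is your explicit remark that $\rank\tilde{A}_{m,\kappa}=m$ holds automatically by invertibility of $A_{m,\kappa}$, which the paper leaves implicit in its appeal to Cramer's rule.
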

\begin{proof}
	(i) $ \Longrightarrow $ (ii). By Lemma \ref{LemGeneralCharacterizationOfMIsometries}, for every $ v\in V_{T} $ there exists $ p_{v}\in\mathbb{R}_{m-1}[x] $ satisfying \eqref{FormSquareNormPolynomial}. By Lemma \ref{LemOnTreesM-1Isometric}, for every $ R\in\mathcal{R} $ and every $ v\in R $, $ p_{v}\in\mathbb{R}_{m-2}[x] $. Next, writing $ p_{v_{0}}(x) = \sum_{j = 0}^{m-1} c_{j}x^{j} $ and using the binomial formula, we have
	\begin{equation*}
		p_{v_{0}}(x+\kappa)-p_{v_{0}}(x) = \sum_{\ell = 0}^{m-2}\left(\sum_{j = \ell+1}^{m-1}\binom{j}{\ell}c_{j}\kappa^{j-\ell} \right)x^{j}.
	\end{equation*}
	Since, by Lemma \ref{LemPolynomialOneStepBack}, the formula
	\begin{equation*}
		q(x) = \sum_{i = 0}^{\kappa-1} \Lambda_{0,i}^{2}\sum_{\substack{R\in\mathcal{R}\\i_{R} = i}} \lambda_{\omega_{R}}^{2}p_{\omega_{R}}(x+\kappa-i-1)
	\end{equation*}
	defines a polynomial of degree at most $ m-2 $ satisfying, by \eqref{FormRecursiveNormOfSquare},
	\begin{equation*}
		q(n) = p_{v_{0}}(n+\kappa)-p_{v_{0}}(n), \qquad n\in\mathbb{N},
	\end{equation*}
	writing $ q(x) = \sum_{\ell = 0}^{m-2}b_{\ell}x^{\ell} $ we obtain
	\begin{equation}
		\label{FormCoeffsOfpv0}
		b_{\ell} = \sum_{j = \ell+1}^{m-1}\binom{j}{\ell}c_{j}\kappa^{j-\ell}, \qquad \ell\in\mathbb{N}\cap[0,m-2].
	\end{equation}
	Hence, $ (c_{m-1},\ldots,c_{0}) $ satisfies \eqref{FormLinearSystemRecursion}. Moreover, since the determinant of $ A_{m,\kappa} $ is non-zero, we infer from Cramer's rule that $ c = (c_{m-1},\ldots,c_{0}) $ is the only solution of this system. By the Kronecker-Capelli theorem (see \cite[Theorem 3.36.(1)-(3)]{nairLinearAlgebra2018}) this implies that $ m = \rank\tilde{A}_{m,\kappa} $. Assume now $ \kappa > 1 $. From the equality $ p_{v_{0}}(i) = a_{i} $ ($ i\in\mathbb{N}\cap[0,\kappa-1] $) we get that $ c $ is also the solution of the following system of linear equations:
	\begin{align*}
		b_{\ell} &= \sum_{j = \ell+1}^{m-1}\binom{j}{\ell}c_{j}\kappa^{j-\ell}, \qquad \ell\in\mathbb{N}\cap[0,m-2],\\
		a_{\ell} &= \sum_{j=0}^{m-1}c_{j}\ell^{j}, \qquad \ell\in \mathbb{N}\cap[1,\kappa-1];
	\end{align*}
	the matrix representation of this system takes the form:
	\begin{align}
		\label{FormLinearSystemRecursionSmallPowers}
		\begin{bmatrix}
			A_{m,\kappa}\\
			B_{m,\kappa}
		\end{bmatrix} c = \begin{bmatrix}
			b\\
			a
		\end{bmatrix}
	\end{align}
	Again, by the Kronecker-Capelli theorem, $ m = \rank\tilde{B}_{m,\kappa} $, which gives (ii).\\
	(ii) $ \Longrightarrow $ (i). By Lemma \ref{LemGeneralCharacterizationOfMIsometries}, it is enough to find the polynomials $ p_{v_{k}}\in\mathbb{R}[x] $ ($ k\in\mathbb{N}\cap[0,\kappa-1] $) satisfying \eqref{FormSquareNormPolynomial}. First, observe that for every $ k\in\mathbb{N}\cap[1,\kappa-1] $ the polynomial $ p_{v_{k}} $ is uniquely determined by $ p_{v_{0}} $ and $ p_{v} $ ($ v\in V\setminus\{v_{0},\ldots,v_{\kappa-1} \} $). Indeed, assuming we have $ p_{v_{0}} $ satisfying \eqref{FormSquareNormPolynomial} and using \eqref{FormNormOfSquareSmallPowers}, we obtain
	\begin{align*}
		\lVert S_{\pmb{\lambda}_{T}}^{n}e_{v_{k}}\rVert^{2} &= \Lambda_{0,k}^{-2} (p_{v_{0}}(n+k) \\
		&- \sum_{i = 0}^{k-1} \Lambda_{0,i}^{2}\sum_{\substack{R\in\mathcal{R}\\i_{R} = i\mod\kappa}} \lambda_{\omega_{R}}^{2}p_{\omega_{R}}(n+k-i-1) ) , \quad n\in \mathbb{N}.
	\end{align*}
	By Lemma \ref{LemPolynomialOneStepBack},
	\begin{align*}
		p_{v_{k}}(\,\cdot\,) &:= \Lambda_{0,k}^{-2} (p_{v_{0}}(\,\cdot\,+k) - \sum_{i = 0}^{k-1} \Lambda_{0,i}^{2}\sum_{\substack{R\in\mathcal{R}\\i_{R} = i\mod\kappa}} \lambda_{\omega_{R}}^{2}p_{\omega_{R}}(\,\cdot+k-i-1) )
	\end{align*}
	is a polynomial of degree at most $ m-1 $, which satisfies \eqref{FormSquareNormPolynomial}. Hence, we have to find only the polynomial $ p_{v_{0}} $. By Cramer's rule, $ \eqref{FormLinearSystemRecursion} $ has only one solution; call $ c = (c_{m-1},\ldots,c_{0}) $ this unique solution and define $ p_{v_{0}}(x) = \sum_{j=0}^{m-1}c_{j}x^{j} $ for $ x\in \mathbb{R} $. By (ii) and the Kronecker-Capelli theorem, $ (c_{m-1},\ldots,c_{0}) $ has to satisfy also \eqref{FormLinearSystemRecursionSmallPowers}. Therefore,
	\begin{equation*}
		p_{v_{0}}(n) = \sum_{j=0}^{m-1}c_{j}n^{j} = a_{j} = \lVert S_{\pmb{\lambda}_{T}}^{n}e_{v_{0}}\rVert^{2}, \qquad n\in\mathbb{N}\cap[0,\kappa-1].
	\end{equation*}
	Since $ (c_{m-1},\ldots,c_{0}) $ satisfies \eqref{FormCoeffsOfpv0}, we get that $ p_{v_{0}}(\,\cdot\,+\kappa)-p_{v_{0}}(\,\cdot\,) = q(\,\cdot\,) $. Thus, from \eqref{FormRecursiveNormOfSquare} we obtain that $ p_{v_{0}}(n) = \lVert S_{\pmb{\lambda}_{T}}^{n}e_{v_{0}}\rVert^{2} $ for every $ n\in\mathbb{N} $. The 'moreover' part easily follows from the above reasoning.
\end{proof}
In \cite{kosmiderMisometricCompositionOperators2021} the authors proved that in the class of composition operators such that the graph $ G_{T} $ has one cycle and one branching point every completely hyperexpansive operator is 2-isometric. Using different approach, with the aid of Theorem \ref{ThmMIsometricCompositionOperatorsOneCycle}, we are able to prove this result in full generality.
\begin{theorem}[\mbox{cf. \cite[Corollary 2.15]{kosmiderMisometricCompositionOperators2021}}]
Suppose $ T\!: X\to X $ is such that $ G_{T} $ is connected and satisfies Theorem \ref{ThmClassificationOfGraphs}.(ii). Suppose $ h_{T}\in L^{\infty}(\mu) $. The following conditions are equivalent:
	\begin{enumerate}
		\item $ S_{\pmb{\lambda}_{T}} $ is 2-isometric,
		\item $ S_{\pmb{\lambda}_{T}} $ is completely hyperexpansive.
	\end{enumerate}
\end{theorem}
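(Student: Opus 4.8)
The plan is to prove the two implications separately, the first being essentially formal and the second being where the one-cycle structure is essential. Throughout I write $a^{v}_{n}:=\lVert S_{\pmb{\lambda}_{T}}^{n}e_{v}\rVert^{2}$. For \emph{(i)}~$\Longrightarrow$~\emph{(ii)} I would invoke Lemma~\ref{LemGeneralCharacterizationOfMIsometries} with $m=2$: $2$-isometricity means that for each vertex $v$ the sequence $(a^{v}_{n})_{n}$ coincides with a polynomial $p_{v}\in\mathbb{R}_{1}[x]$, say $p_{v}(n)=c_{v}+d_{v}n$. Since $p_{v}(n)\ge 0$ for all $n\in\mathbb{N}$, the slope $d_{v}$ is non-negative. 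Passing to an arbitrary $f=\sum_{j}\alpha_{v_{j}}e_{v_{j}}$ via Observation~\ref{ObsPreservingOrthogonality} and the summation argument of Lemma~\ref{LemGeneralCharacterizationOfMIsometries}, the sequence $(\lVert S_{\pmb{\lambda}_{T}}^{n}f\rVert^{2})_{n}$ is again affine with non-negative slope. A short computation with the identities $\sum_{k=0}^{n}(-1)^{k}\binom{n}{k}=0$ $(n\ge 1)$ and $\sum_{k=0}^{n}(-1)^{k}\binom{n}{k}k=0$ $(n\ge 2)$ then shows that every affine sequence with non-negative slope is completely alternating, i.e.\ $\sum_{k=0}^{n}(-1)^{k}\binom{n}{k}\lVert S_{\pmb{\lambda}_{T}}^{k}f\rVert^{2}\le 0$ for all $n\ge 1$; hence $S_{\pmb{\lambda}_{T}}$ is completely hyperexpansive.

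For \emph{(ii)}~$\Longrightarrow$~\emph{(i)} the point is that I only need the cases $n=1$ and $n=2$ of the defining inequalities. Evaluating them on $S_{\pmb{\lambda}_{T}}^{j}e_{v}$, the case $n=1$ gives that each $(a^{v}_{n})_{n}$ is non-decreasing, and the case $n=2$ gives that it is \emph{concave} (its consecutive increments are non-increasing). I then feed these two facts into the recursion \eqref{FormRecursiveNormOfSquare}. Taking squared norms there and using the orthogonality already exploited in \eqref{FormRecursivePolynomialOnCycle}, one obtains, for \emph{every} operator $S_{\pmb{\lambda}_{T}}$,
\[
a^{v_{m}}_{n+\kappa}-a^{v_{m}}_{n}
=\sum_{i=0}^{\kappa-1}\Lambda_{m,i}^{2}\!\!\!\sum_{\substack{R\in\mathcal{R}\\ i_{R}=i+m\mod\kappa}}\!\!\!\lambda_{\omega_{R}}^{2}\,a^{\omega_{R}}_{n+\kappa-i-1}
=:g_{m}(n).
\]
The left-hand side is a sum of $\kappa$ consecutive increments of $(a^{v_{m}}_{n})_{n}$, hence \emph{non-increasing} in $n$ by concavity; the right-hand side is a non-negative combination of the shifted sequences $a^{\omega_{R}}_{n+\kappa-i-1}$, each \emph{non-decreasing} in $n$. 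Being simultaneously non-increasing and non-decreasing, $g_{m}$ is constant for every cycle vertex $v_{m}$.

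From the constancy of $g_{m}$ I extract two conclusions. First, since $g_{m}$ is a non-negative combination (with strictly positive coefficients $\Lambda_{m,i}^{2}\lambda_{\omega_{R}}^{2}$) of non-decreasing sequences that sums to a constant, each summand is itself constant; choosing the base vertex with $m\equiv i_{R}+1\pmod{\kappa}$ makes the tree $R$ appear with shift $\kappa-i-1=0$, so that $a^{\omega_{R}}_{n}$ is constant for \emph{all} $n\in\mathbb{N}$. Propagating this down each tree by the one-step relation $a^{v}_{n}=\sum_{(v,u)\in E_{T}}\lambda_{u}^{2}\,a^{u}_{n-1}$ (which follows from Observation~\ref{ObsPreservingOrthogonality}, exactly as in the proof of Lemma~\ref{LemOnTreesM-1Isometric}), together with the non-decreasing property and the tree induction principle of Lemma~\ref{LemInductionOnTree}, shows that $a^{v}_{n}$ is constant for every tree vertex $v$. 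Second, the identity $a^{v_{m}}_{n+\kappa}-a^{v_{m}}_{n}=\mathrm{const}$ together with the increments of $(a^{v_{m}}_{n})_{n}$ being non-increasing forces these increments to be $\kappa$-periodic and non-increasing, hence constant; thus $(a^{v_{m}}_{n})_{n}$ is affine for every cycle vertex. Consequently $(a^{v}_{n})_{n}$ is a polynomial of degree at most $1$ for \emph{every} $v\in V_{T}$, and Lemma~\ref{LemGeneralCharacterizationOfMIsometries} (with $m=2$) yields that $S_{\pmb{\lambda}_{T}}$ is $2$-isometric.

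The hard part will be the bookkeeping in the sandwiching step: reading off from \eqref{FormRecursiveNormOfSquare} that the only $n$-dependence on the right enters through genuinely non-decreasing tree sequences, and, crucially, arranging the passage from ``each $a^{\omega_{R}}$ constant on a tail'' to ``constant on all of $\mathbb{N}$'' by choosing the base cycle vertex so that the relevant shift $\kappa-i-1$ vanishes. Everything else---the concavity and monotonicity consequences of complete hyperexpansivity, the downward propagation on the trees, and the final polynomial reconstruction---is routine once the earlier lemmata are in place.
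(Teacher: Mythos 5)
Your proposal is correct, and its second implication follows a genuinely different route from the paper's. For (i)$\Rightarrow$(ii) the paper observes that a $2$-isometry is $m$-isometric for every $m\ge 2$ and invokes \cite[Proposition 1.5]{aglerMisometricTransformationsHilbert1995}; your direct computation with $\sum_{k}(-1)^{k}\binom{n}{k}$ and $\sum_{k}(-1)^{k}\binom{n}{k}k$ is an equally valid, self-contained substitute, the non-negativity of the slope of $p_{f}$ being forced by $p_{f}\ge 0$ on $\mathbb{N}$. For (ii)$\Rightarrow$(i) the paper uses the full strength of complete hyperexpansivity: it invokes the integral representation $\lVert S_{\pmb{\lambda}_{T}}^{n}e_{v}\rVert^{2}=1+\int_{[0,1]}(1+\dots+t^{n-1})\,\mathrm{d}\tau_{v}(t)$ of completely alternating sequences from \cite{athavaleCompletelyHyperexpansiveOperators1996}, feeds it into \eqref{FormRecursiveNormOfSquare}, and shows by a sign analysis that $\tau_{\omega_{R}}=0$ and $\tau_{v_{m}}=D_{m}\delta_{1}$, whence the norm sequences are affine. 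You instead use only the $n=1$ and $n=2$ inequalities (monotonicity and concavity of $(\lVert S_{\pmb{\lambda}_{T}}^{n}e_{v}\rVert^{2})_{n}$) and sandwich $g_{m}(n)=\lVert S_{\pmb{\lambda}_{T}}^{n+\kappa}e_{v_{m}}\rVert^{2}-\lVert S_{\pmb{\lambda}_{T}}^{n}e_{v_{m}}\rVert^{2}$ between a non-increasing and a non-decreasing expression; the subsequent bookkeeping (choosing $m\equiv i_{R}+1\pmod{\kappa}$ so that the shift $\kappa-i-1$ vanishes, propagating constancy down the trees via Lemma \ref{LemInductionOnTree}, and deducing affineness on the cycle from the $\kappa$-periodicity of the non-increasing increments) is sound, and the orthogonality needed to pass from the vector identity \eqref{FormRecursiveNormOfSquare} to the scalar one is exactly the fact the paper uses to obtain \eqref{FormRecursivePolynomialOnCycle} and \eqref{FormCompletelyAlternatingRecursiveOnCycle}. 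Your argument is more elementary (no representation theorem for completely alternating sequences) and proves something formally stronger: in this class the first two hyperexpansivity inequalities alone already imply $2$-isometricity, so $2$-hyperexpansivity, complete hyperexpansivity and $2$-isometricity all coincide. What the paper's route buys in exchange is explicit knowledge of the representing measures, which foreshadows the two-atomic measures appearing in Section \ref{SecCauchyDualOfCompOperator}.
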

\begin{proof}
	(i)$ \Longrightarrow $(i). By Lemma \ref{LemGeneralCharacterizationOfMIsometries}, if $ S_{\pmb{\lambda}_{T}} $ is 2-isometric, then it is $ m $-isometric for every $ m\in \mathbb{N}_{2} $. Combining this with \cite[Proposition 1.5]{aglerMisometricTransformationsHilbert1995} we obtain that $ S_{\pmb{\lambda}_{T}} $ is completely hyperexpansive.\\
	(ii)$ \Longrightarrow $(i). From (ii), by \cite[Remark 2]{athavaleCompletelyHyperexpansiveOperators1996}, it follows that for every $ v\in V_{T} $ there exists the measure $ \tau_{v}\!: \mathcal{B}([0,1])\to [0,\infty) $ satisfying
	\begin{equation}
		\label{FormCompletelyAlternatingAtVertex}
		\lVert S_{\pmb{\lambda}_{T}}^{n}e_{v}\rVert^{2} = 1+\int_{[0,1]}(1+t+\ldots+t^{n-1})\ddd \tau_{v}(t), \qquad n\in\mathbb{N}_{1}.
	\end{equation}
	Inserting \eqref{FormCompletelyAlternatingAtVertex} into \eqref{FormRecursiveNormOfSquare} we obtain that for every $ n\in\mathbb{N}_{1} $,
	\begin{align}
		\label{FormCompletelyAlternatingRecursiveOnCycle}
		&1+\int_{[0,1]}(1+\ldots+t^{n+\kappa-1})\ddd\tau_{v_{m}}(t) = 1+\int_{[0,1]}(1+\ldots+t^{n-1})\ddd\tau_{v_{m}}(t)\nonumber\\
		&+\sum_{i=0}^{\kappa-1} \Lambda_{m,i}^{2}\!\!\!\!\!\sum_{\substack{R\in\mathcal{R}\\i_{R} = i+m\mod\kappa}}\!\!\!\!\!\lambda_{\omega_{R}}^{2}(1+\int_{[0,1]}(1+\ldots+t^{n+\kappa-i-2})\ddd\tau_{\omega_{R}}(t))
	\end{align}
	If $ v\in V_{T} $, then for every $ n\in \mathbb{N} $ we have
	\begin{equation*}
		\lambda_{u}^{2}\int_{[0,1]}t^{n}\ddd \tau_{u}(t) \le \lambda_{u}^{2}(1+\int_{[0,1]}(1+\ldots+t^{n}))\ddd\tau_{u}(t), \qquad u\in V_{T}, (v,u)\in E_{T}.
	\end{equation*}
	By \eqref{FormCompletelyAlternatingAtVertex} and Observation \ref{ObsPreservingOrthogonality}, for every $ n\in \mathbb{N} $,
	\begin{equation*}
		\sum_{\substack{u\in V_{T}\\(v,u)\in E_{T}}} \lambda_{u}^{2}(1+\int_{[0,1]}(1+\ldots+t^{n}))\ddd\tau_{u}(t) = \sum_{\substack{u\in V_{T}\\(v,u)\in E_{T}}}\!\!\!\! \lambda_{u}^{2} \lVert S_{\pmb{\lambda}_{T}}^{n+1}e_{u}\rVert^{2} = \lVert S_{\pmb{\lambda}_{T}}^{n+2}e_{v}\rVert^{2}.
	\end{equation*}
	Thus, the series
	\begin{equation*}
		\sum_{\substack{u\in V_{T}\\(v,u)\in E_{T}}}\lambda_{u}^{2}\int_{[0,1]}t^{n}\ddd\tau_{u}
	\end{equation*}
	is convergent for every $ v\in V_{T} $ and $ n\in \mathbb{N} $. By \eqref{FormCompletelyAlternatingAtVertex} we have that
	\begin{align*}
		&\int_{[0,1]}t^{n+\kappa}\ddd\tau_{v_{m}}(t) = \lVert S_{\pmb{\lambda}_{T}}^{n+\kappa+1}e_{v_{m}}\rVert^{2}-\lVert S_{\pmb{\lambda}_{T}}^{n+\kappa}e_{v_{m}}\rVert^{2}\\
		&=(1+\int_{[0,1]}(1+\ldots+t^{n+\kappa})\ddd\tau_{v_{m}}(t)) - (1+\int_{[0,1]}(1+\ldots+t^{n+\kappa-1})\ddd\tau_{v_{m}}(t))
	\end{align*} 
	Using \eqref{FormCompletelyAlternatingRecursiveOnCycle} with $ n $ replaced with $ n+1 $, we get that for every $ n\in\mathbb{N}_{1} $,
	\begin{align*}
		\int_{[0,1]}t^{n+\kappa}\ddd\tau_{v_{m}}(t) &= \int_{[0,1]}t^{n}\ddd\tau_{v_{m}}(t) \\
		&+ \sum_{i=0}^{\kappa-1} \Lambda_{m,i}^{2}\!\!\!\!\!\!\!\!\sum_{\substack{R\in\mathcal{R}\\i_{R} = i+m\mod\kappa}} \!\!\!\!\!\!\!\lambda_{\omega_{R}}^{2}\int_{[0,1]}t^{n+\kappa-i-1}\ddd\tau_{\omega_{R}}(t),
	\end{align*}
	which implies that
	\begin{align*}
		\int_{[0,1]}t^{n}(t^{\kappa}-1)\ddd\tau_{v_{m}}(t) = \sum_{i=0}^{\kappa-1} \Lambda_{m,i}^{2}\!\!\!\!\!\sum_{\substack{R\in\mathcal{R}\\i_{R} = i+m\mod\kappa}}\!\!\!\!\! \lambda_{\omega_{R}}^{2}\int_{[0,1]}t^{n+\kappa-i-1}\ddd\tau_{\omega_{R}}(t).
	\end{align*}
	Since $ t^{\kappa} -1 \le 0 $ for every $ t\in[0,1] $, the integral on the left hand side is non-positive. On the other hand, we have $ \lambda_{v}>0 $ for every $ v\in V_{T} $, so each term in the sum on the right hand side is non-negative. Therefore, we get that for all $ m\in\mathbb{N}\cap[0,\kappa-1] $, $ n\in\mathbb{N}_{1} $,
	\begin{equation}
		\label{FormMeasuresOnCycle}
		\int_{[0,1]} t^{n}(t^{\kappa}-1)\ddd\tau_{v_{m}}(t) = 0
	\end{equation}
	and for all $ R\in \mathcal{R} $,
	\begin{equation}
		\label{FormMeasuresOnTrees}
		\int_{[0,1]} t^{n+\kappa-i-1}\ddd\tau_{\omega_{R}}(t) = 0, \qquad i\in\mathbb{N}\cap[0,\kappa-1],\ i+m= i_{R}\mod\kappa.
	\end{equation}
	By \eqref{FormMeasuresOnTrees}, $ \tau_{\omega_{R}}((0,1]) = 0 $. This implies that for all $ R\in \mathcal{R} $, $ \tau_{\omega_{R}} = C_{R}\delta_{0} $ with some $ C_{R}\in [0,\infty) $. In turn, by \eqref{FormMeasuresOnCycle}, $ \tau_{v_{m}}((0,1)) = 0 $. Hence, for all $ m\in \mathbb{N}\cap[0,\kappa-1] $, $ \tau_{v_{m}} = C_{m}\delta_{0}+D_{m}\delta_{1} $ with some constants $ C_{m},D_{m}\in [0,\infty) $. Inserting these into \eqref{FormCompletelyAlternatingRecursiveOnCycle} we obtain that for every $ n\in\mathbb{N}_{2} $ and every $ m\in\mathbb{N}\cap[0,\kappa-1] $,
	\begin{equation*}
		1+C_{m}+(n+\kappa)D_{m} = 1+C_{m}+nD_{m}+\sum_{i=0}^{\kappa-1}\Lambda_{m,i}^{2}\!\!\!\!\!\!\!\sum_{\substack{R\in\mathcal{R}\\i_{R} = i+m\mod\kappa}}\!\!\!\!\!\!\!\lambda_{\omega_{R}}^{2}(1+C_{R}).
	\end{equation*}
	Hence,
	\begin{equation}
		\label{FormMassAtZeroMeasuresOnCycle}
		\kappa D_{m} = \sum_{i=0}^{\kappa-1}\Lambda_{m,i}^{2}\!\!\!\!\!\!\!\sum_{\substack{R\in\mathcal{R}\\i_{R} = i+m\mod\kappa}}\!\!\!\!\!\!\!\lambda_{\omega_{R}}^{2}(1+C_{R}).
	\end{equation}
	If $ \kappa > 1 $, then, using \eqref{FormRecursiveNormOfSquare} with $ n = 0 $ and \eqref{FormCompletelyAlternatingAtVertex} with $ v=v_{m} $, we get that
	\begin{align*}
		&1+C_{m}+\kappa D_{m} \\
		&= 1+ \sum_{i=0}^{\kappa-2}\Lambda_{m,i}^{2}\!\!\!\!\!\!\!\sum_{\substack{R\in\mathcal{R}\\i_{R} = i+m\mod\kappa}}\!\!\!\!\!\!\!\lambda_{\omega_{R}}^{2}(1+C_{R}) + \Lambda_{m,\kappa-1}^{2}\sum_{\substack{R\in\mathcal{R}\\i_{R} = m-1}} \lambda_{\omega_{R}}^{2}\\
		&= 1+ \sum_{i=0}^{\kappa-1}\Lambda_{m,i}^{2}\!\!\!\!\!\!\!\sum_{\substack{R\in\mathcal{R}\\i_{R} = i+m\mod\kappa}}\!\!\!\!\!\!\!\lambda_{\omega_{R}}^{2}(1+C_{R}) - \Lambda_{m,\kappa-1}^{2}\sum_{\substack{R\in\mathcal{R}\\i_{R} = m-1}} \lambda_{\omega_{R}}^{2}C_{R};
	\end{align*}
	if $ \kappa =1 $, by \eqref{FormNormOfSquareSmallPowers} applied with $ k=1 $, we have
	\begin{align*}
		&1+C_{0}+\kappa D_{0} \\
		&= 1+ \sum_{\substack{R\in\mathcal{R}}} \lambda_{\omega_{R}}^{2}\\
		&= 1+ \sum_{\substack{R\in\mathcal{R}\\}}\lambda_{\omega_{R}}^{2}(1+C_{R}) - \sum_{\substack{R\in\mathcal{R}}} \lambda_{\omega_{R}}^{2}C_{R};
	\end{align*}
	Combining the above with \eqref{FormMassAtZeroMeasuresOnCycle}, we obtain
	\begin{align*}
		1+C_{m}+\kappa D_{m} = 1+ \kappa D_{m} - \Lambda_{m,\kappa-1}^{2}\sum_{\substack{R\in\mathcal{R}\\i_{R} = m-1}} \lambda_{\omega_{R}}^{2}C_{R},
	\end{align*}
	which implies that
	\begin{align*}
		C_{m} = -\Lambda_{m,m+\kappa-1}^{2}\sum_{\substack{R\in\mathcal{R}\\i_{R} = m-1}} \lambda_{\omega_{R}}^{2}C_{R}.
	\end{align*}
	Since $ \lambda_{v}>0 $ for all $ v\in V_{T} $, we have that $ C_{m} \le 0 $ for $ m\in\mathbb{N}\cap[0,\kappa-1] $. On the other hand, $ C_{m}\ge 0 $. Therefore, $ C_{m} = 0 $ for all $ m\in\mathbb{N}\cap[0,\kappa-1] $, and, consequently, $ C_{R} = 0 $ for every $ R\in \mathcal{R} $. Hence,
	\begin{equation*}
		\tau_{\omega_{R}} = 0, \qquad R\in \mathcal{R},
	\end{equation*}
	and
	\begin{equation*}
		\tau_{v_{m}} = D_{m}\delta_{1}, \qquad m\in \mathbb{N}\cap[0,\kappa-1].
	\end{equation*}
	By \eqref{FormCompletelyAlternatingAtVertex}, this implies that $ \lVert S_{\pmb{\lambda}_{T}}^{n}e_{\omega_{R}}\rVert = 1 $ for $ n\in\mathbb{N} $ and that $ \lVert S_{\pmb{\lambda}}^{n}e_{v_{m}}\rVert^{2} = 1+nD_{m} $ for $ n\in\mathbb{N} $. It remains to show that
	\begin{equation*}
		\tau_{v} = 0, \qquad v\in V_{T}\setminus\{v_{0},\ldots,v_{\kappa-1} \}, n\in\mathbb{N},
	\end{equation*}
	which implies that
	\begin{equation*}
		\lVert S_{\pmb{\lambda}_{T}}^{n}e_{v}\rVert = 1, \qquad v\in V_{T}\setminus\{v_{0},\ldots,v_{\kappa-1} \}, n\in\mathbb{N}.
	\end{equation*}
	Since we have already proved this for $ v = \omega_{R} $, $ R\in\mathcal{R} $, by Lemmata \ref{LemAfterRemovingSCC} and \ref{LemInductionOnTree}, it is enough to show that if $ \tau_{v} =0  $ for some $ v\in V_{T}\setminus\{v_{0},\ldots,v_{\kappa-1} \} $, then $ \tau_{u} = 0 $ for every $ u\in V_{T} $ satisfying $ (v,u)\in E_{T} $. Assume $ v\in V_{T}\setminus\{v_{0},\ldots,v_{\kappa-1} \} $ is such that $ \tau_{v} = 0 $. Since
	\begin{equation*}
		S_{\pmb{\lambda}_{T}}^{n+1} e_{v} = \sum_{\substack{u\in V_{T}\\ (v,u)\in E_{T}}} \lambda_{u}S_{\pmb{\lambda}_{T}}^{n}e_{u}, \qquad n\in\mathbb{N},
	\end{equation*}
	it follows from \eqref{FormCompletelyAlternatingAtVertex} that
	\begin{equation}
		\label{FormCompletelyAlternatingVertexWithZeroMeasure}
		1 = \sum_{\substack{u\in V_{T}\\ (v,u)\in E_{T}}} \lambda_{u}^{2}(1+\int_{[0,1]}(1+\ldots+t^{n-1} )\ddd\tau_{u}(t)), \qquad n\in\mathbb{N}_{1}.
	\end{equation}
	If we replace $ n $ with $ n+1 $ in the above equality, we obtain
	\begin{align*}
		1 &= \sum_{\substack{u\in V_{T}\\ (v,u)\in E_{T}}} \lambda_{u}^{2}(1+\int_{[0,1]}(1+\ldots+t^{n} )\ddd\tau_{u}(t))\\
		&= \sum_{\substack{u\in V_{T}\\ (v,u)\in E_{T}}} \lambda_{u}^{2}(1+\int_{[0,1]}(1+\ldots+t^{n-1} )\ddd\tau_{u}(t))+\sum_{\substack{u\in V_{T}\\ (v,u)\in E_{T}}} \lambda_{v}^{2}\int_{[0,1]} t^{n}\ddd\tau_{u}(t)\\
		&\stackrel{\eqref{FormCompletelyAlternatingVertexWithZeroMeasure}}{=}1+ \sum_{\substack{u\in V_{T}\\ (v,u)\in E_{T}}} \lambda_{v}^{2}\int_{[0,1]} t^{n}\ddd\tau_{u}(t).
	\end{align*}
	Hence,
	\begin{equation*}
		0 = \sum_{\substack{u\in V_{T}\\ (v,u)\in E_{T}}} \lambda_{v}^{2}\int_{[0,1]} t^{n}\ddd\tau_{u}(t), \qquad n\in\mathbb{N}_{1},
	\end{equation*}
	which implies that for every $ u\in V_{T} $ satisfying $ (v,u)\in E_{T} $ we have $ \tau_{u}((0,1]) = 0 $. Thus, $ \tau_{u} = C_{u}\delta_{0} $ with some $ C_{u}\in [0,\infty)$. Then \eqref{FormCompletelyAlternatingVertexWithZeroMeasure} takes the form
	\begin{equation*}
		1 = \sum_{\substack{u\in V_{T}\\ (v,u)\in E_{T}}} \lambda_{u}^{2}(1+C_{u}), \qquad n\in\mathbb{N}_{1}.
	\end{equation*}
	But
	\begin{equation*}
		1 = \lVert S_{\pmb{\lambda}_{T}} e_{v}\rVert^{2} = \sum_{\substack{u\in V_{T}\\ (v,u)\in E_{T}}} \lambda_{u}^{2}.
	\end{equation*} 
	Combining these two equalities we obtain that $ C_{u} = 0 $ for every $ u\in V_{T} $ satisfying $ (v,u)\in E_{T} $.
\end{proof}
As another corollary of the equality \eqref{FormRecursiveNormOfSquare} we obtain that in our setting graphs of isometric composition operators consist only of cycles.
\begin{corollary}
	\label{CorIsometricCompositionOperatorsOneCycle}
	Suppose $ T\!: X\to X $ is such that $ G_{T} $ is connected and satisfies Theorem \ref{ThmClassificationOfGraphs}.(ii). Suppose $ h_{T}\in L^{\infty}(\mu) $. If $ S_{\pmb{\lambda}_{T}}\in \mathbf{B}(\ell^{2}(V_{T})) $ is isometric, then $ \mathcal{R} = \varnothing $, that is, $ V_{T} = \{v_{0},\ldots,v_{\kappa-1} \} $. In particular, $ \ell^{2}(V_{T}) $ is finite dimensional and $ S_{\pmb{\lambda}_{T}} $ is unitary.
\end{corollary}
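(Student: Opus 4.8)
The plan is to read the conclusion off the recursion \eqref{FormRecursiveNormOfSquare} by exploiting that an isometry preserves the norm of every iterate. Since $S_{\pmb{\lambda}_{T}}$ is isometric, $\lVert S_{\pmb{\lambda}_{T}}^{n}f\rVert=\lVert f\rVert$ for all $f$ and $n$, so in particular $\lVert S_{\pmb{\lambda}_{T}}^{n}e_{v}\rVert^{2}=1$ for every $v\in V_{T}$ and $n\in\mathbb{N}$. I would specialize \eqref{FormRecursiveNormOfSquare} to $n=0$, obtaining
\begin{equation*}
	S_{\pmb{\lambda}_{T}}^{\kappa}e_{v_{m}} = e_{v_{m}} + \sum_{i=0}^{\kappa-1}\Lambda_{m,i}\!\!\!\!\!\sum_{\substack{R\in\mathcal{R}\\ i_{R}=i+m\mod\kappa}}\!\!\!\!\!\lambda_{\omega_{R}}S_{\pmb{\lambda}_{T}}^{\kappa-i-1}e_{\omega_{R}},
\end{equation*}
and then argue that the correction term on the right is forced to vanish.

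The key step is an orthogonality observation based on supports. Writing out $S_{\pmb{\lambda}_{T}}^{j}e_{u}$ and using that every vertex of $G_{T}$ has in-degree $1$ (Observation \ref{ObsIndegreeOfGraphUniquePathWithDistinctVertices}), one sees that its support is exactly $T^{-j}(\{u\})$ with strictly positive coefficients. If a vertex $w$ lay in the supports of both $e_{v_{m}}$ (that is, $w=v_{m}$) and of some $S_{\pmb{\lambda}_{T}}^{\kappa-i-1}e_{\omega_{R}}$, then $T^{\kappa-i-1}v_{m}=\omega_{R}$; but $T$ maps a cycle vertex to a cycle vertex, whereas $\omega_{R}$ is a tree vertex, a contradiction. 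The same computation, using that a tree vertex cannot be a periodic point of $T$ (it does not lie on the cycle), shows that the summands $S_{\pmb{\lambda}_{T}}^{\kappa-i-1}e_{\omega_{R}}$ attached to distinct pairs $(i,R)$ have pairwise disjoint supports. Hence $e_{v_{m}}$ is orthogonal to the whole correction term and that term is an orthogonal sum; taking squared norms and invoking $\lVert S_{\pmb{\lambda}_{T}}^{\kappa}e_{v_{m}}\rVert^{2}=\lVert e_{v_{m}}\rVert^{2}=1$ together with the Pythagorean theorem yields
\begin{equation*}
	0 = \sum_{i=0}^{\kappa-1}\Lambda_{m,i}^{2}\!\!\!\!\!\sum_{\substack{R\in\mathcal{R}\\ i_{R}=i+m\mod\kappa}}\!\!\!\!\!\lambda_{\omega_{R}}^{2}\lVert S_{\pmb{\lambda}_{T}}^{\kappa-i-1}e_{\omega_{R}}\rVert^{2}.
\end{equation*}

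Since $\Lambda_{m,i}>0$, $\lambda_{\omega_{R}}>0$, and $\lVert S_{\pmb{\lambda}_{T}}^{\kappa-i-1}e_{\omega_{R}}\rVert^{2}=1$, every term is strictly positive, so the sum can vanish only if no index $R$ occurs. Fixing $m=0$ and letting $i$ range over $\mathbb{N}\cap[0,\kappa-1]$ exhausts all of $\mathcal{R}$ (each $R$ has a unique $i_{R}$ in that range), so $\mathcal{R}=\varnothing$ and $V_{T}=\{v_{0},\ldots,v_{\kappa-1}\}$. Consequently $\ell^{2}(V_{T})$ is $\kappa$-dimensional, and an isometry of a finite-dimensional Hilbert space is automatically surjective, hence unitary. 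I expect the only delicate point to be the clean justification of the support/orthogonality claim. As a shortcut that avoids the mutual orthogonality of the correction summands, one may instead sum the cycle-vertex identities $\lambda_{v_{(m+1)\mod\kappa}}^{2}+\sum_{i_{R}=m}\lambda_{\omega_{R}}^{2}=1$ over $m$ and compare the resulting $\sum_{m}\lambda_{v_{m}}^{2}$ with the AM--GM lower bound $\kappa$ coming from $\Lambda_{0,\kappa}=\prod_{m}\lambda_{v_{m}}=1$ (Lemma \ref{LemRecursiveFormulas}.(i)), which forces $\sum_{R}\lambda_{\omega_{R}}^{2}=0$ at once.
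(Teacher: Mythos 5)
Your proposal is correct and follows essentially the same route as the paper: apply the recursion \eqref{FormRecursiveNormOfSquare}, use that all powers of an isometry have unit norm on basis vectors together with the disjointness of supports to force the correction term to vanish, and then invoke positivity of the weights to conclude $\mathcal{R}=\varnothing$; your explicit support/orthogonality justification merely fills in a step the paper leaves implicit, and your concluding appeal to surjectivity of finite-dimensional isometries replaces the paper's citation of Agler--Stankus. The AM--GM aside (summing $\lVert S_{\pmb{\lambda}_{T}}e_{v_{m}}\rVert^{2}=1$ over the cycle and comparing with $\Lambda_{0,\kappa}=1$) is a valid and pleasantly elementary alternative, but the main argument already matches the paper's.
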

\begin{proof}
	If $ S_{\pmb{\lambda}_{T}} $ is isometric, then so is $ S_{\pmb{\lambda}_{T}}^{n} $ for every $ n\in\mathbb{N} $. Thus, from \eqref{FormRecursiveNormOfSquare} it follows that
	\begin{equation*}
		\sum_{i = 0}^{\kappa-1}\Lambda_{m,i}^{2}\!\!\!\!\!\sum_{\substack{R\in\mathcal{R}\\i_{R} = i+m\mod\kappa}}\!\!\!\!\! \lambda_{\omega_{R}}^{2}\lVert S_{\pmb{\lambda}_{T}}^{n+\kappa-i-1}e_{\omega_{R}}\rVert^{2} = 0, \qquad m\in \mathbb{N}\cap [0,\kappa-1],\ n\in\mathbb{N}.
	\end{equation*}
	Since $ \Lambda_{m,i} > 0 $ for all $ i,m\in\mathbb{N}\cap[0,\kappa-1] $, we have that
	\begin{equation*}
		\sum_{\substack{R\in\mathcal{R}\\i_{R} = i+m\mod\kappa}}\!\!\!\!\! \lambda_{\omega_{R}}^{2}\lVert S_{\pmb{\lambda}_{T}}^{n+\kappa-i-1}e_{\omega_{R}}\rVert^{2}=0, \qquad i,m\in \mathbb{N}\cap [0,\kappa-1],\ n\in\mathbb{N}.
	\end{equation*}
	From this it follows that
	\begin{equation*}
		\sum_{\substack{R\in\mathcal{R}\\i_{R} = i}} \lambda_{\omega_{R}}^{2} = 0, \qquad i\in\mathbb{N}\cap[0,\kappa-1].
	\end{equation*}
	Since $ \lambda_{\omega_{R}} >0 $ for all $ R\in\mathcal{R} $, the only possibility for the above sum being zero is that the set of indices is empty, that is, $ \mathcal{R} = \varnothing $. This implies that $ \ell^{2}(V_{T}) $ is finite dimensional. Hence, $ S_{\pmb{\lambda}_{T}} $ is invertible and, by \cite[Proposition 1.23]{aglerMisometricTransformationsHilbert1995}, unitary.
\end{proof}
In the end of this section we provide an example of use of Theorem \ref{ThmMIsometricCompositionOperatorsOneCycle}. We characterize 3-isometric composition operators on graphs, in which the only branching points are the vertices on the cycle. Let $ \kappa\in \mathbb{N}_{1} $ and $ \{N_{i}\}_{i=0}^{\kappa-1}\subset \mathbb{N}_{1}\cup\{\infty\} $. Denote
\begin{equation*}
	X = \{0,1,\ldots,\kappa-1\}\cup \{(i,j,k)\!: i\in \mathbb{N}\cap[0,\kappa-1], \ j\in \mathbb{N}\cap[1,N_{i}], \ k\in \mathbb{N} \}.
\end{equation*}
Define $ T\!: X\to X $ by the formula
\begin{equation*}
	T(x) = \begin{cases}
		\kappa-1, \quad \text{if } x = 0\\
		x-1, \quad \text{if } x\in \mathbb{N}\cap[1,\kappa-1]\\
		i, \quad \text{if } x = (i,j,0), 0\le i\le \kappa-1,0\le j\le N_{i}\\
		(i,j,k-1), \quad \text{if } x = (i,j,k), k\ge 1, 0\le i\le \kappa-1,0\le j\le N_{i}
	\end{cases}.
\end{equation*}
Then the graph $ G_{T} = (V_{T},E_{T}) $ takes the form
\begin{align*}
	V_{T} &= X,\\
	E_{T} &= \{(i,(i+1)\mod \kappa)\!: i\in \mathbb{N}\cap[0,\kappa-1]\}\\
	&\cup \{(i,(i,j,0))\!: i\in \mathbb{N}\cap[0,\kappa-1],j\in \mathbb{N}\cap[1,N_{i}]\}\\
	&\cup \{((i,j,k),(i,j,k+1))\!: i\in \mathbb{N}\cap[0,\kappa-1], \ j\in \mathbb{N}\cap[1,N_{i}], \ k\in \mathbb{N}\};
\end{align*}
we present this graph in Figure \ref{ImGraphWithCycleSpec}
\begin{figure}
	\centering
	\includegraphics[scale=0.3]{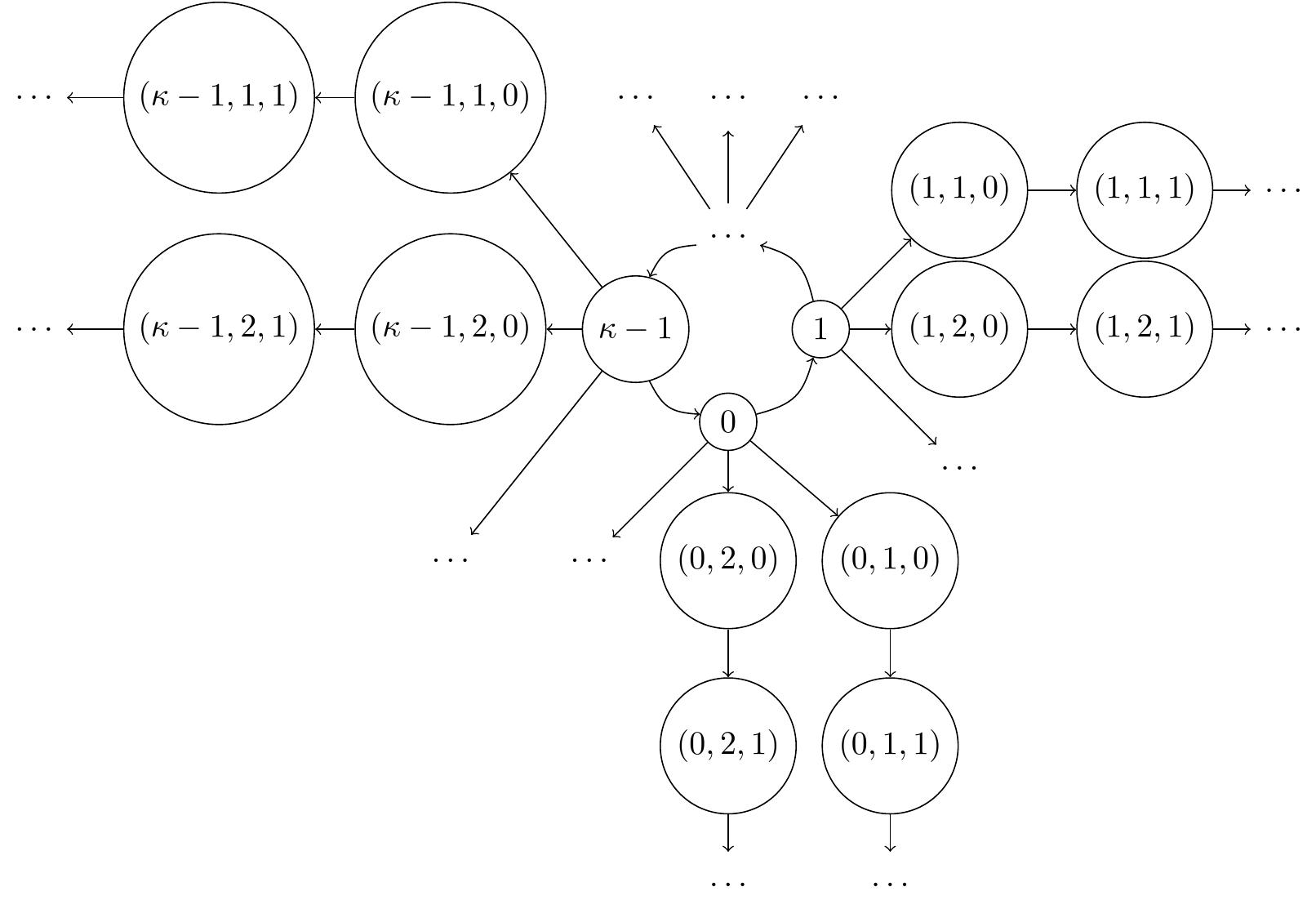}
	\caption{Graph $ G_{T} $, in which the only branching points are the vertices on the cycle}
	\label{ImGraphWithCycleSpec}
\end{figure}
\begin{theorem}
	Let $ X $ and $ T $ be defined as above and let $ \mu\!: 2^{X}\to [0,\infty] $ be a measure on $ X $ such that $ \mu(x) \in (0,\infty) $ for every $ x\in X $. Assume that $ h_{T}\in L^{\infty}(\mu) $. The following conditions are equivalent:
	\begin{enumerate}
		\item $ C_{T}\in \mathbf{B}(L^{2}(\mu)) $ is 3-isometric,
		\item for all $ i\in \mathbb{N}\cap[0,\kappa-1], j\in \mathbb{N}\cap[1,N_{i}] $,
		\begin{equation}
			\label{FormMeasureOnBranches}
			\mu((i,j,k)) = (\mu((i,j,1))-\mu((i,j,0)))k+\mu((i,j,0)), \quad  k\in \mathbb{N}, 
		\end{equation} and\footnote{Note that under the assumption $ h_{T}\in L^{\infty}(\mu) $ we have that for every $ i\in \mathbb{N}\cap[0,\kappa-1] $, $ \sum_{j=1}^{N_{i}}\mu((i,j,0)) = \mu(T^{-1}(\{i\})) \stackrel{\eqref{FormRadonNikodymDerivativeDiscrete}}{=} h_{T}(i)\mu(\{i\}) < \infty $; similar argument shows that $ \sum_{j=1}^{N_{i}}\mu((i,j,1)) < \infty $. By \eqref{FormMeasureOnBranches}, we get that $ \sum_{j=1}^{N_{i}}\mu((i,j,i-\ell-1)) < \infty $ for $ i\in \mathbb{N}\cap[1,\kappa-1] $, $ \ell\in \mathbb{N}\cap[0,i-1] $.}
		\begin{equation}
			\label{FormMeasureOnCycle}
			\mu(i) = q_{0}(i)-\sum_{\ell=0}^{i-1}\sum_{j=1}^{N_{\ell}}\mu((\ell,j,i-\ell-1)), \quad i\in \mathbb{N}\cap[1,\kappa-1],
		\end{equation}
		where $ q_{0}(x) = \frac{A}{2\kappa}x^{2}+\frac{2B-A\kappa}{2\kappa}x+\mu(0) \in \mathbb{R}_{2}[x] $ and
		\begin{align*}
			A &= \sum_{i=0}^{\kappa-1}\sum_{j=1}^{N_{i}}(\mu((i,j,1))-\mu((i,j,0))),\\
			B &= \sum_{i=0}^{\kappa-1}\sum_{j=1}^{N_{i}}\left[(\mu((i,j,1))-\mu((i,j,0)))(\kappa-i-1)+\mu((i,j,0))\right].
		\end{align*}
	\end{enumerate}
\end{theorem}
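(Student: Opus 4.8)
The plan is to specialise Theorem~\ref{ThmMIsometricCompositionOperatorsOneCycle} to $m=3$. By Lemma~\ref{LemUnitaryEquivalence} it suffices to characterise $3$-isometry of $S:=S_{\pmb{\lambda}_{T}}$. In the notation of Theorem~\ref{ThmClassificationOfGraphs}.(ii) the cycle is $v_{i}=i$ (with $(v_{\kappa-1},v_{0})\in E_{T}$), while $\mathcal{R}$ consists of the rays $R_{i,j}$ with root $\omega_{R_{i,j}}=(i,j,0)$ and $i_{R_{i,j}}=i$. From \eqref{FormDefinitionOfWeights} one reads off $\lambda_{(i,j,k)}^{2}=\mu((i,j,k))/\mu((i,j,k-1))$ for $k\ge 1$, $\lambda_{(i,j,0)}^{2}=\mu((i,j,0))/\mu(i)$, and, by a telescoping product, $\Lambda_{0,i}^{2}=\mu(i)/\mu(0)$ for $i\in\mathbb{N}\cap[0,\kappa-1]$. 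I will show that the tree clause of Theorem~\ref{ThmMIsometricCompositionOperatorsOneCycle}.(ii) is equivalent to \eqref{FormMeasureOnBranches} and that, granting \eqref{FormMeasureOnBranches}, the rank clause is equivalent to \eqref{FormMeasureOnCycle}.

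\emph{Tree vertices.} Each branch is a non-branching ray, so $S^{n}e_{(i,j,k)}=\bigl(\prod_{l=k+1}^{k+n}\lambda_{(i,j,l)}\bigr)e_{(i,j,k+n)}$ and hence, telescoping, $\lVert S^{n}e_{(i,j,k)}\rVert^{2}=\mu((i,j,k+n))/\mu((i,j,k))$. A polynomial $p_{(i,j,k)}\in\mathbb{R}_{1}[x]$ satisfying \eqref{FormSquareNormPolynomial} exists for all such vertices precisely when $n\mapsto\mu((i,j,n))$ is affine, that is, when \eqref{FormMeasureOnBranches} holds; conversely \eqref{FormMeasureOnBranches} makes every $p_{(i,j,k)}$ affine.

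\emph{Cycle vertices.} Assume \eqref{FormMeasureOnBranches} and first take $\kappa>1$. Inserting $p_{\omega_{R_{i,j}}}(n)=\mu((i,j,n))/\mu((i,j,0))$ into \eqref{FormPolynomialP0MinusP0C} and using $\Lambda_{0,i}^{2}\lambda_{\omega_{R_{i,j}}}^{2}=\mu((i,j,0))/\mu(0)$ gives $\mu(0)q(x)=\sum_{i=0}^{\kappa-1}\sum_{j=1}^{N_{i}}\mu((i,j,x+\kappa-i-1))$, and substituting the affine form \eqref{FormMeasureOnBranches} collapses this to $\mu(0)q(x)=Ax+B$. Consequently the unique quadratic $p_{v_{0}}$ solving \eqref{FormLinearSystemRecursion}, characterised by $p_{v_{0}}(0)=1$ and $p_{v_{0}}(\,\cdot+\kappa)-p_{v_{0}}(\,\cdot\,)=q(\,\cdot\,)$, is exactly $p_{v_{0}}=q_{0}/\mu(0)$ with $q_{0}$ the polynomial in the statement. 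Since $A_{3,\kappa}$ is lower triangular with nonzero diagonal, $\rank\tilde{A}_{3,\kappa}=3$ holds automatically, so by the Kronecker--Capelli theorem the remaining requirement $\rank\tilde{B}_{3,\kappa}=3$ is equivalent to $p_{v_{0}}(\ell)=\lVert S^{\ell}e_{0}\rVert^{2}$ for $\ell\in\mathbb{N}\cap[1,\kappa-1]$.

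Finally I evaluate the right-hand side. Applying \eqref{FormNormOfSquareSmallPowers} with $m=0$, $k=\ell\le\kappa-1$ together with the orthogonality of Observation~\ref{ObsPreservingOrthogonality} yields $\mu(0)\lVert S^{\ell}e_{0}\rVert^{2}=\mu(\ell)+\sum_{i=0}^{\ell-1}\sum_{j=1}^{N_{i}}\mu((i,j,\ell-i-1))$, so the condition $p_{v_{0}}(\ell)=\lVert S^{\ell}e_{0}\rVert^{2}$ reads $q_{0}(\ell)=\mu(\ell)+\sum_{i=0}^{\ell-1}\sum_{j=1}^{N_{i}}\mu((i,j,\ell-i-1))$, which is precisely \eqref{FormMeasureOnCycle}. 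When $\kappa=1$ both the rank clause and \eqref{FormMeasureOnCycle} are vacuous, so $3$-isometry reduces to \eqref{FormMeasureOnBranches}. The bulk of the labour is the index bookkeeping in the two displayed norm computations and the verification $\mu(0)q(x)=Ax+B$; once these identities are in place, the equivalence is a direct transcription of Theorem~\ref{ThmMIsometricCompositionOperatorsOneCycle}, and the stated formula for $q_{0}$ follows from solving the difference equation $q_{0}(x+\kappa)-q_{0}(x)=Ax+B$ with $q_{0}(0)=\mu(0)$.
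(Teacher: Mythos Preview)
Your proof is correct and follows essentially the same approach as the paper: both specialise Theorem~\ref{ThmMIsometricCompositionOperatorsOneCycle} with $m=3$, translate the tree clause into the affine condition \eqref{FormMeasureOnBranches} via $\lVert S^{n}e_{(i,j,k)}\rVert^{2}=\mu((i,j,k+n))/\mu((i,j,k))$, compute $\mu(0)q(x)=Ax+B$, identify $p_{v_{0}}=q_{0}/\mu(0)$ as the solution of \eqref{FormLinearSystemRecursion}, and reduce the rank condition to $p_{v_{0}}(\ell)=\lVert S^{\ell}e_{0}\rVert^{2}$, which is then rewritten as \eqref{FormMeasureOnCycle} using \eqref{FormNormOfSquareSmallPowers}. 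The only cosmetic difference is that you package the argument as a chain of equivalences whereas the paper writes out the two implications separately.
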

\begin{proof}
	(i)$ \Longrightarrow $(ii). If $ C_{T} $ is 3-isometric, then so is $ S_{\pmb{\lambda}_{T}} $. By Theorem \ref{ThmMIsometricCompositionOperatorsOneCycle}, this implies that for every $ i\in \mathbb{N}\cap[0,\kappa-1] $ and $ j\in \mathbb{N}\cap[1,N_{i}] $ there exists a polynomial $ p_{i,j}\in \mathbb{R}_{1}[x] $ satisfying \eqref{FormSquareNormPolynomial} with $ v = (i,j,0) $; it is a matter of routine to verify that $ p_{i,j} $ takes the form
	\begin{equation}
		\label{ProofFormPolynomialOnBranches}
		p_{i,j}(x) = (\lambda_{(i,j,1)}^{2}-1)x+1, \qquad x\in \mathbb{R}.
	\end{equation}
	Since, by \eqref{FormDefinitionOfWeights},
	\begin{equation*}
		p_{i,j}(n) = \lVert S_{\pmb{\lambda}_{T}}^{n}e_{(i,j,0)}\rVert^{2} = \prod_{k=1}^{n}\lambda_{i,j,k}^{2} = \frac{\mu((i,j,n))}{\mu((i,j,0))}, \quad n\in \mathbb{N}_{1},
	\end{equation*}
	it follows that the polynomial $ q_{i,j} = \mu((i,j,0))p_{i,j} $ satisfies the equality $ q_{i,j}(k) = \mu((i,j,k)) $ for every $ k\in \mathbb{N} $; moreover, by \eqref{ProofFormPolynomialOnBranches} and \eqref{FormDefinitionOfWeights}, $ q_{i,j} $ takes the form
	\begin{equation*}
		q_{i,j}(x) = (\mu((i,j,1))-\mu((i,j,0)))x+\mu((i,j,0)), \qquad x\in \mathbb{R}.
	\end{equation*}
	Hence, \eqref{FormMeasureOnBranches} holds. Next, by \eqref{FormDefinitionOfWeights} and \eqref{ProofFormPolynomialOnBranches}, the polynomial $ q $ given by \eqref{FormPolynomialP0MinusP0C} takes the form
	\begin{align*}
		q(x) &= \sum_{i=0}^{\kappa-1}\frac{\mu(i)}{\mu(0)} \sum_{j=1}^{N_{i}} \frac{\mu((i,j,0))}{\mu(i)}p_{i,j}(x+\kappa-i-1)\\
		&=\frac{1}{\mu(0)}\sum_{i=0}^{\kappa-1}\sum_{j=1}^{N_{i}}\left[ (\mu((i,j,1))-\mu((i,j,0)))(x+\kappa-i-1)+\mu((i,j,0)) \right]\\
		&=\frac{1}{\mu(0)}\left[ Ax+B \right], \qquad x\in \mathbb{R}.
	\end{align*}
	By Theorem \ref{ThmMIsometricCompositionOperatorsOneCycle}, the polynomial $ p_{0}\in \mathbb{R}_{2}[x] $ satisfying \eqref{FormSquareNormPolynomial} with $ v = 0 $ takes the form $ p_{0}(x) = c_{2}x^{2}+c_{1}x+c_{0} $, where $ (c_{2},c_{1},c_{0}) $ is the solution of the system
	\begin{equation}
		\label{ProofFormCoeffsSystemOfEquations}
		\begin{bmatrix}
			2\kappa & 0 & 0\\
			\kappa^{2} & \kappa & 0\\
			0 & 0 & 1
		\end{bmatrix}\begin{bmatrix}
			c_{2}\\
			c_{1}\\
			c_{0}
		\end{bmatrix} = \begin{bmatrix}
			\frac{1}{\mu(0)}A\\
			\frac{1}{\mu(0)}B\\
			1
		\end{bmatrix}
	\end{equation}
	Solving the above system of equations, we obtain that $ p_{0} = \frac{1}{\mu(0)}q_{0} $. From \eqref{FormNormOfSquareSmallPowers} and \eqref{FormDefinitionOfWeights} it follows that for every $ i\in \mathbb{N}\cap[1,\kappa-1] $,
	\begin{align*}
		q_{0}(i) = \mu(0)\lVert S_{\pmb{\lambda}}^{i}e_{0}\rVert^{2} = \mu(i) + \sum_{\ell=0}^{i-1}\sum_{j=1}^{N_{\ell}}\mu((\ell,j,i-\ell-1)).
	\end{align*}
	After rearrangement we get \eqref{FormMeasureOnCycle}.\\
	(ii)$ \Longrightarrow $(i). Using \eqref{FormMeasureOnBranches}, it can be verified that for every $ i\in \mathbb{N}\cap[0,\kappa-1] $, $ j\in \mathbb{N}\cap[0,N_{i}] $ the polynomial
	\begin{equation*}
		p_{i,j,0}(x) = (\lambda_{(i,j,1)}^{2}-1)x+1, \qquad x\in \mathbb{R},
	\end{equation*}
	satisfies \eqref{FormSquareNormPolynomial} for $ v = (i,j,0) $. Next, from the equality
	\begin{equation*}
		\lVert S_{\pmb{\lambda}_{T}}^{n}e_{(i,j,k)}\rVert^{2} = \frac{\lVert S_{\pmb{\lambda}_{T}}^{n+k}e_{(i,j,0)}\rVert^{2}}{\prod_{\ell=1}^{k}\lambda_{(i,j,\ell)}^{2}}, \qquad k\in \mathbb{N}_{1},
	\end{equation*}
	it follows that for every $ i\in \mathbb{N}\cap[0,\kappa-1] $, $ j\in \mathbb{N}\cap[0,N_{i}] $ and $ k\in \mathbb{N}_{1} $ the polynomial
	\begin{equation*}
		p_{i,j,k}(x) = \left( \prod_{\ell=1}^{k}\lambda_{i,j,\ell}^{2} \right)^{-1}p_{i,j,0}(x+k), \qquad x\in \mathbb{R},
	\end{equation*}
	satisfies \eqref{FormSquareNormPolynomial} with $ v = (i,j,k) $. Assume now $ \kappa > 1 $. We check that $ \rank \tilde{A}_{3,\kappa} = \rank \tilde{B}_{3,\kappa} $. Set $ p_{0} = \frac{1}{\mu(0)}q_{0} $. As in the proof of converse implication, we obtain that the coefficients of $ p_{0} $ satisfies \eqref{ProofFormCoeffsSystemOfEquations}. Combining \eqref{FormMeasureOnCycle} with \eqref{FormNormOfSquareSmallPowers} and \eqref{FormDefinitionOfWeights} we deduce that $ p_{0} $ satisfies
	\begin{equation*}
		\lVert S_{\pmb{\lambda}_{T}}^{n}e_{0}\rVert^{2} = p_{0}(n), \qquad n\in \mathbb{N}\cap[0,\kappa-1].
	\end{equation*}
	This implies that the coefficients of $ p_{0} $ satisfies also the following system of equations:
	\begin{equation*}
		\begin{bmatrix}
			A_{3,\kappa}\\
			B_{3,\kappa}
		\end{bmatrix} \begin{bmatrix}
			c_{2}\\
			c_{1}\\
			c_{0}
		\end{bmatrix} = \begin{bmatrix}
			\frac{1}{\mu(0)}A\\
			\frac{1}{\mu(0)}B\\
			1\\
			a_{1}\\
			\vdots\\
			a_{\kappa-1},
		\end{bmatrix}
	\end{equation*}
	where $ a_{\ell} = \lVert S_{\pmb{\lambda}_{T}}^{\ell}e_{0}\rVert^{2} $, $ \ell\in \mathbb{N}\cap[1,\kappa-1] $. From the Kronecker-Capelli theorem it follows that $ \rank \tilde{A}_{3,\kappa} = \rank \tilde{B}_{3,\kappa} $. The application of Theorem \ref{ThmMIsometricCompositionOperatorsOneCycle} completes the proof.
\end{proof}
	\section{Subnormality of Cauchy dual of composition operator}
\label{SecCauchyDualOfCompOperator}
 In \cite{kosmiderMisometricCompositionOperators2021} the authors studied the Cauchy dual subnormality problem for 2-isometric composition operators with the property that the graph $ G_{T} $ has one cycle and one branching point and obtained the solution in case of graphs with cycle of length 1. In this section we investigate the Cauchy dual subnormality problem for 2-isometric composition operators with graphs having one cycle in full generality. Again, we stick to the notation of Theorem \ref{ThmClassificationOfGraphs}.(ii). First, we are going to derive formulas for $ S_{\pmb{\lambda}_{T}}^{\ast} $ and $ S_{\pmb{\lambda}_{T}}' $.
\begin{lemma}
	\label{LemAdjointModulusCauchyDualFormulas}
	Suppose $ T\!: X\to X $ is such that $ G_{T} $ is connected and satisfies Theorem \ref{ThmClassificationOfGraphs}.(ii). Suppose $ h_{T}\in L^{\infty}(\mu) $. Then
	\begin{enumerate}
		\item $ S_{\pmb{\lambda}_{T}}^{\ast}e_{v} = \lambda_{v}e_{w} $ for every $ v\in V_{T} $, where $ w\in V_{T} $ is the only vertex satisfying $ (w,v)\in E_{T} $, that is $ S_{\pmb{\lambda}_{T}}^{\ast} $ is the weighted shift on $ (V_{T},E_{T}^{-1}) $, where $ E_{T}^{-1} = \{(u,v)\in V_{T}\times V_{T}\!: (v,u)\in E_{T}\} $,
		\item $ S_{\pmb{\lambda}_{T}}^{\ast}S_{\pmb{\lambda}_{T}}e_{v} = \lVert S_{\pmb{\lambda}_{T}}e_{v}\rVert^{2}e_{v} $ for every $ v\in V_{T} $,
		\item if $ S_{\pmb{\lambda}_{T}} $ is left-invertible, then
		\begin{equation*}
			S_{\pmb{\lambda}_{T}}'e_{v} = \sum_{\substack{u\in V_{T}\\ (v,u)\in E_{T}}} c_{v}\lambda_{u}e_{u},
		\end{equation*}
		where $ c_{v} = \lVert S_{\pmb{\lambda}_{T}}e_{v}\rVert^{-2} $, $ v\in V_{T} $; in particular, $ S_{\pmb{\lambda}_{T}}^{\prime} $ is the weighted shift on $ G_{T} $.
	\end{enumerate}
\end{lemma}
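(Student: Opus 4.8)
The plan is to derive all three formulas directly from the defining relation $S_{\pmb{\lambda}_{T}}e_{u} = \sum_{Tv=u}\lambda_{v}e_{v}$ together with the fact, recorded in Observation \ref{ObsIndegreeOfGraphUniquePathWithDistinctVertices}, that $\degin v = 1$ for every $v\in V_{T}$; the latter is what makes every sum below collapse to a single term. I would begin with (i) by testing against the orthonormal basis. Fixing $v\in V_{T}$, for any $u\in V_{T}$ one has
\begin{equation*}
	\langle S_{\pmb{\lambda}_{T}}^{\ast}e_{v},e_{u}\rangle = \langle e_{v},S_{\pmb{\lambda}_{T}}e_{u}\rangle = \Big\langle e_{v},\sum_{Tw=u}\lambda_{w}e_{w}\Big\rangle,
\end{equation*}
which equals $\lambda_{v}$ exactly when $Tv=u$ (equivalently $(u,v)\in E_{T}$) and vanishes otherwise. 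Since $\degin v=1$, there is a single such $u$, namely the unique vertex $w$ with $(w,v)\in E_{T}$, and hence $S_{\pmb{\lambda}_{T}}^{\ast}e_{v}=\lambda_{v}e_{w}$, which is (i).

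For (ii) I would apply $S_{\pmb{\lambda}_{T}}^{\ast}$ term by term to $S_{\pmb{\lambda}_{T}}e_{v}=\sum_{(v,u)\in E_{T}}\lambda_{u}e_{u}$. For each $u$ occurring in this sum the edge $(v,u)$ is the unique incoming edge at $u$, so (i) gives $S_{\pmb{\lambda}_{T}}^{\ast}e_{u}=\lambda_{u}e_{v}$; thus every term contributes a multiple of $e_{v}$ and
\begin{equation*}
	S_{\pmb{\lambda}_{T}}^{\ast}S_{\pmb{\lambda}_{T}}e_{v}=\Big(\sum_{(v,u)\in E_{T}}\lambda_{u}^{2}\Big)e_{v}=\lVert S_{\pmb{\lambda}_{T}}e_{v}\rVert^{2}e_{v},
\end{equation*}
the last equality being Parseval's identity. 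In particular $S_{\pmb{\lambda}_{T}}^{\ast}S_{\pmb{\lambda}_{T}}$ is diagonal in the basis $\{e_{v}\}_{v\in V_{T}}$.

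Finally, (iii) follows by inverting this diagonal operator. Left-invertibility of $S_{\pmb{\lambda}_{T}}$ is equivalent to invertibility of $S_{\pmb{\lambda}_{T}}^{\ast}S_{\pmb{\lambda}_{T}}$, so each $\lVert S_{\pmb{\lambda}_{T}}e_{v}\rVert^{2}$ is positive and $(S_{\pmb{\lambda}_{T}}^{\ast}S_{\pmb{\lambda}_{T}})^{-1}e_{v}=c_{v}e_{v}$ with $c_{v}=\lVert S_{\pmb{\lambda}_{T}}e_{v}\rVert^{-2}$. Therefore
\begin{equation*}
	S_{\pmb{\lambda}_{T}}'e_{v}=S_{\pmb{\lambda}_{T}}(S_{\pmb{\lambda}_{T}}^{\ast}S_{\pmb{\lambda}_{T}})^{-1}e_{v}=c_{v}S_{\pmb{\lambda}_{T}}e_{v}=\sum_{(v,u)\in E_{T}}c_{v}\lambda_{u}e_{u},
\end{equation*}
and reading off the coefficients exhibits $S_{\pmb{\lambda}_{T}}'$ as the weighted shift on $G_{T}$ with weights $c_{v}\lambda_{u}$. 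I expect no genuine obstacle here: the whole argument is routine, and the only point requiring care is the repeated use of the in-degree-one property, without which the cross terms in the computation of $S_{\pmb{\lambda}_{T}}^{\ast}S_{\pmb{\lambda}_{T}}$ would survive and the operator would fail to be diagonal.
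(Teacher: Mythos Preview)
Your proof is correct and follows essentially the same approach as the paper: compute $\langle S_{\pmb{\lambda}_{T}}^{\ast}e_{v},e_{u}\rangle$ via the definition of the adjoint and invoke Observation \ref{ObsIndegreeOfGraphUniquePathWithDistinctVertices} to single out the unique incoming edge. The paper simply declares (ii) and (iii) to be ``simple consequences of (i)'', whereas you spell out the diagonalization of $S_{\pmb{\lambda}_{T}}^{\ast}S_{\pmb{\lambda}_{T}}$ and its inversion explicitly, but the underlying argument is identical.
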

\begin{proof}
	(i). Let $ v\in V $. Then
	\begin{equation*}
		\langle S_{\pmb{\lambda}_{T}}e_{w},e_{v}\rangle = \begin{cases}
		\lambda_{v}, & \text{if } (w,v)\in E_{T}\\
		0, & \text{otherwise}
		\end{cases}.
	\end{equation*}
	Hence, $ \langle S_{\pmb{\lambda}_{T}}^{\ast}e_{v},e_{w}\rangle \not= 0 $ if and only if $ (w,v)\in E_{T} $. By Observation \ref{ObsIndegreeOfGraphUniquePathWithDistinctVertices}, there is only one vertex $ w\in V_{T} $ satisfying $ (w,v)\in E_{T} $; for this vertex $ w $ we have $ S_{\pmb{\lambda}_{T}}^{\ast}e_{v} = \lambda_{v}e_{w} $. (ii) and (iii) are simple consequences of (i).
\end{proof}
Note that if $ S_{\pmb{\lambda}_{T}} $ is assumed to be 2-isometric, then, by \cite[Lemma 1]{richterInvariantSubspacesDirichlet1988}, it is also an expansion, so that $ c_{v} \in (0,1] $, $ v\in V_{T} $.\par
Assume that $ G_{T} $ is connected and satisfies Theorem \ref{ThmClassificationOfGraphs}.(ii). Suppose $ S_{\pmb{\lambda}_{T}}\in \mathbf{B}(\ell^{2}(V_{T})) $ is $ 2 $-isometric. For further applications let us emphasize two formulas (cf. Lemma \ref{LemRecursiveFormulas}). For every $ m\in\mathbb{N}\cap[0,\kappa-1] $ and $ n\in\mathbb{N} $ we have
\begin{align*}
	S_{\pmb{\lambda}_{T}}^{\prime n}e_{v_{m}} &= \Lambda_{m,n}^{\prime} e_{v_{m+n\mod \kappa}}\\
	&+\sum_{i = 0}^{n-1}c_{v_{i\mod\kappa}}\Lambda_{m,i}^{\prime} \sum_{\substack{R\in\mathcal{R}\\i_{R} = m+i\mod\kappa}} \lambda_{\omega_{R}} S_{\pmb{\lambda}_{T}}^{\prime n-i-1}e_{\omega_{R}}
\end{align*}
and
\begin{align}
	\label{FormRecursiveCauchyDual}
	S_{\pmb{\lambda}_{T}}^{\prime n+\kappa}e_{v_{m}} &= \Lambda_{m,\kappa}^{\prime} S_{\pmb{\lambda}_{T}}^{\prime n}e_{v_{m}}\nonumber\\
	&+\sum_{i = 0}^{\kappa-1}c_{v_{i\mod\kappa}}\Lambda_{m,i}^{\prime} \sum_{\substack{R\in\mathcal{R}\\i_{R} = m+i\mod\kappa}} \lambda_{\omega_{R}} S_{\pmb{\lambda}_{T}}^{\prime n+\kappa-i-1}e_{\omega_{R}},
\end{align}
where
\begin{equation*}
	\Lambda_{m,i}^{\prime} = \prod_{j=m+1}^{m+i} \lambda_{v_{j\mod\kappa}}c_{v_{(j-1)\mod\kappa}}, \qquad m\in\mathbb{N}\cap[0,\kappa-1],\ i\in\mathbb{N}.
\end{equation*}
The above formulas are consequences of Lemma \ref{LemAdjointModulusCauchyDualFormulas}.(iii) and the fact that for 2-isometric $ S_{\pmb{\lambda}_{T}} $ every $ c_{v} $ ($ v\in V_{T}\setminus\{ v_{0},\ldots,v_{\kappa-1} \} $) is equal 1 (see Lemma \ref{LemOnTreesM-1Isometric}). Note also that $ \Lambda^{\prime}_{m,\kappa} = \Lambda^{\prime}_{0,\kappa} $ for every $ m\in \mathbb{N}\cap[1,\kappa-1] $.\\
It is well-known (see e.g. \cite{anandSolutionCauchyDual2019}) that the Cauchy dual of 2-isometric operator is always a contraction (in our setting it can be easily derived from Lemma \ref{LemAdjointModulusCauchyDualFormulas}.(iii)). This observation will be crucial in our considerations about subnormality of the Cauchy dual of $ S_{\pmb{\lambda}_{T}} $.\par
Note that if $ \mathcal{R}=\varnothing $, then $ \ell^{2}(V_{T}) $ is finite dimensional. In such a case every 2-isometric operator is automatically invertible and, by \cite[Proposition 1.23]{aglerMisometricTransformationsHilbert1995}, unitary. Since $ U' = U $ for every unitary operator $ U $, if $ \mathcal{R} = \varnothing $ and $ S_{\pmb{\lambda}_{T}} $ is 2-isometric, $ S_{\pmb{\lambda}_{T}}^{\prime} $ is obviously subnormal. In the further investigation we always assume that $ \mathcal{R}\not=\varnothing $. Let us present the main result in this section.
\begin{theorem}
	\label{ThmCauchyDualSubnormalityCondition}
	Let $ T\!: X\to X $ be such that $ G_{T} $ is connected, satisfies Theorem \ref{ThmClassificationOfGraphs}.(ii) with additional assumption that $ \mathcal{R}\not=\varnothing $. Assume that $ h_{T}\in L^{\infty}(\mu) $. If $ S_{\pmb{\lambda}}\in \mathbf{B}(\ell^{2}(V_{T})) $ is $ 2 $-isometric, then the following conditions are equivalent:
	\begin{enumerate}
		\item $ S_{\pmb{\lambda}_{T}}' $ is subnormal,
		\item for every $ m\in\mathbb{N}\cap[0,\kappa-1] $,
		\begin{equation*}
			\lVert S_{\pmb{\lambda}_{T}}^{\prime n}e_{v_{m}}\rVert^{2} = \int_{[0,1]} t^{n}\ddd\mu_{v_{m}}(t), \qquad n\in\mathbb{N}\cap[0,\kappa-1],
		\end{equation*}
	\end{enumerate}
	with $ \mu_{v_{m}} = (1-\alpha_{m})\delta_{\sqrt[\kappa]{D}}+\alpha_{m}\delta_{1} $, where
	\begin{align*}
		D &= \Lambda_{0,\kappa}^{\prime 2} =  \prod_{i=0}^{\kappa-1} c_{v_{i}}^{2},\\
		\alpha_{m} &= \frac{C_{m}}{1-D}, \qquad m\in\mathbb{N}\cap[0,\kappa-1],\\
		C_{m} &= \sum_{i=0}^{\kappa-1} c_{v_{i\mod\kappa}}^{2}\Lambda_{m,i}^{\prime 2}\!\!\!\!\!\!\!\sum_{\substack{R\in\mathcal{R}\\i_{R} = i+m\mod\kappa}}\!\!\!\!\!\!\! \lambda_{\omega_{R}}^{2}, \qquad m\in\mathbb{N}\cap[0,\kappa-1].
	\end{align*}
\end{theorem}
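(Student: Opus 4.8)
The plan is to reduce the whole statement to a single scalar recursion for the numbers $a_m(n):=\lVert S_{\pmb{\lambda}_{T}}^{\prime n}e_{v_m}\rVert^{2}$ and then to read off the measures. Since $S_{\pmb{\lambda}_{T}}'$ is again a weighted shift on $G_T$ (Lemma \ref{LemAdjointModulusCauchyDualFormulas}.(iii)), it preserves the orthogonality of the basis exactly as in Observation \ref{ObsPreservingOrthogonality}. Because $S_{\pmb{\lambda}_{T}}$ is $2$-isometric, Lemma \ref{LemOnTreesM-1Isometric} gives $c_v=1$ at every tree vertex; hence $S_{\pmb{\lambda}_{T}}'$ coincides with $S_{\pmb{\lambda}_{T}}$ on each hanging tree, acts there isometrically, and $\lVert S_{\pmb{\lambda}_{T}}^{\prime n}e_{\omega_R}\rVert^{2}=1$ for all $R\in\mathcal{R}$ and $n\in\mathbb{N}$. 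Taking squared norms in \eqref{FormRecursiveCauchyDual}, and noting that the tree summands $S_{\pmb{\lambda}_{T}}^{\prime\,n+\kappa-i-1}e_{\omega_R}$ are supported on pairwise disjoint trees and each strictly deeper in its tree than the tree part of $S_{\pmb{\lambda}_{T}}^{\prime n}e_{v_m}$, so that all summands are mutually orthogonal, I obtain (using $\Lambda_{m,\kappa}^{\prime}=\sqrt{D}$) the first-order recursion $a_m(n+\kappa)=D\,a_m(n)+C_m$. Here $D<1$: otherwise every $c_{v_i}=1$ and $S_{\pmb{\lambda}_{T}}$ would be isometric, forcing $\mathcal{R}=\varnothing$ by Corollary \ref{CorIsometricCompositionOperatorsOneCycle}. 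As the Cauchy dual of a $2$-isometry is a contraction, $a_m(\kappa)=D+C_m\le 1$, whence $\alpha_m=C_m/(1-D)\in[0,1]$ and $\mu_{v_m}$ is a genuine probability measure. A direct check shows that $n\mapsto\int_{[0,1]}t^{n}\ddd\mu_{v_m}(t)$ satisfies the same recursion with the same value at $n=0$, so the identity in (ii) for $n\in\mathbb{N}\cap[0,\kappa-1]$ is equivalent to its validity for all $n\in\mathbb{N}$; this is what gives the finite condition in (ii) its force.

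For the implication (i)$\Rightarrow$(ii), suppose $S_{\pmb{\lambda}_{T}}'$ has a normal extension $N$; then $\lVert N\rVert\le 1$ and, by normality, $a_m(n)=\langle (N^{\ast}N)^{n}e_{v_m},e_{v_m}\rangle$, so the spectral theorem yields a probability measure $\nu_{v_m}$ on $[0,1]$ with $a_m(n)=\int_{[0,1]}t^{n}\ddd\nu_{v_m}(t)$. Substituting into the recursion and taking the first difference in $n$ gives $\int_{[0,1]}t^{n}(t-1)(t^{\kappa}-D)\ddd\nu_{v_m}(t)=0$ for every $n\in\mathbb{N}$. Since polynomials are dense in $C([0,1])$, the signed measure $(t-1)(t^{\kappa}-D)\ddd\nu_{v_m}$ vanishes, so $\nu_{v_m}$ is carried by the zero set $\{\sqrt[\kappa]{D},1\}$. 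Imposing $a_m(0)=1$ together with the constant value $C_m$ of $\int_{[0,1]}t^{n}(t^{\kappa}-D)\ddd\nu_{v_m}$ pins the two masses to $1-\alpha_m$ and $\alpha_m$, i.e.\ $\nu_{v_m}=\mu_{v_m}$, which is precisely (ii).

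The substantial direction is (ii)$\Rightarrow$(i). By the reduction above, (ii) upgrades to $a_m(n)=\int_{[0,1]}t^{n}\ddd\mu_{v_m}(t)$ for all $n$, and combined with $\lVert S_{\pmb{\lambda}_{T}}^{\prime n}e_{\omega_R}\rVert^{2}=\int_{[0,1]}t^{n}\ddd\delta_{1}(t)$ we then have a representing measure for every element of the orthonormal basis: the two-atomic $\mu_{v_m}$ on the cycle and $\delta_1$ on the trees. I would prove subnormality by assembling these into an explicit normal extension. On every hanging tree $S_{\pmb{\lambda}_{T}}'$ is an isometry, hence subnormal, so the only coupling to be controlled runs along the cycle and through the junctions where the roots $\omega_R$ attach to it. The main obstacle is to verify a backward consistency condition for the family $\{\mu_{v_m}\}_{m}\cup\{\delta_1\}$, in the spirit of the directed-tree analysis of \cite{jablonskiWeightedShiftsOnDirectedTrees} but adapted to the single cycle: one must check that the measure attached to $v_m$ is recovered from those attached to the vertices it maps onto after one application of the shift, and that traversing the whole cycle closes up. This is exactly where the rigid two-atomic shape is forced: the atom at $1$ is the isometric value inherited from the trees, while the atom at $\sqrt[\kappa]{D}$, with $D=\prod_{i}c_{v_i}^{2}$, records precisely the contraction accumulated over one full revolution, so the consistency relations close after a single loop. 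Realizing the resulting compatible family of measures as multiplication by $t$ on the associated $L^{2}$-space produces a normal operator whose restriction to $\ell^{2}(V_T)$ equals $S_{\pmb{\lambda}_{T}}'$, yielding subnormality.
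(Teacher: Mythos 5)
Your reduction to the scalar recursion $a_m(n+\kappa)=D\,a_m(n)+C_m$, the verification that $D<1$ and $\alpha_m\in[0,1]$, and the whole implication (i)$\Rightarrow$(ii) are correct and essentially coincide with the paper's argument: the paper obtains the representing measures $\mu_v$ from \cite[Theorems 3.1 and 2.2]{lambertSubnormalityWeightedShifts1976} plus contractivity of $S_{\pmb{\lambda}_{T}}'$, derives $(t^{\kappa}-D)\ddd\mu_{v_m}=C_m\delta_1$ from the same recursion, and pins down the two atoms exactly as you do (your first-difference trick $\int t^{n}(t-1)(t^{\kappa}-D)\ddd\nu_{v_m}=0$ is a clean way to justify the localization step that the paper states somewhat tersely).

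The gap is in (ii)$\Rightarrow$(i). You correctly upgrade (ii) to the moment identities $\lVert S_{\pmb{\lambda}_{T}}^{\prime n}e_{v}\rVert^{2}=\int_{[0,1]}t^{n}\ddd\mu_{v}(t)$ for all $n$ and all basis vectors, but then you propose to build an explicit normal extension by verifying a ``backward consistency condition'' for the family $\{\mu_{v_m}\}\cup\{\delta_1\}$ in the spirit of the directed-tree analysis, and you never state that condition precisely, let alone check that the two-atomic measures satisfy it or that the construction ``closes up'' around the cycle. As written this is a plan with its hardest step explicitly left open, not a proof; adapting the tree machinery to a graph containing a cycle is exactly the kind of step that cannot be waved through. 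The paper avoids this entirely: since $S_{\pmb{\lambda}_{T}}'$ is again a weighted shift on $G_T$ (Lemma \ref{LemAdjointModulusCauchyDualFormulas}), Observation \ref{ObsPreservingOrthogonality} gives $\lVert S_{\pmb{\lambda}_{T}}^{\prime n}f\rVert^{2}=\sum_{v}\lvert\alpha_v\rvert^{2}\lVert S_{\pmb{\lambda}_{T}}^{\prime n}e_{v}\rVert^{2}$, so the Hausdorff moment property established on basis vectors passes to every $f\in\ell^{2}(V_T)$, and Lambert's characterization of subnormality then yields (i) immediately. Replacing your final paragraph by this one-line appeal to Lambert's criterion (which you already use implicitly in the other direction) closes the gap; the explicit-extension route would require substantially more work to be rigorous.
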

\begin{proof}
	(i) $ \Longrightarrow $ (ii). From \cite[Theorems 3.1 and 2.2]{lambertSubnormalityWeightedShifts1976} and the fact that $ S_{\pmb{\lambda}_{T}}^{\prime} $ is contractive it follows that for every $ v\in V_{T} $ there exists the measure $ \mu_{v}\!:\mathcal{B}([0,1])\to [0,1] $ such that
	\begin{equation*}
		\lVert S_{\pmb{\lambda}_{T}}^{\prime n}e_{v}\rVert^{2} = \int_{[0,1]} t^{n}\ddd\mu_{v}(t), \qquad n\in\mathbb{N}.
	\end{equation*}
	In turn, by Theorem \ref{ThmMIsometricCompositionOperatorsOneCycle} and Lemma \ref{LemAdjointModulusCauchyDualFormulas}, we have
	\begin{equation*}
		\lVert S_{\pmb{\lambda}_{T}}^{\prime n}e_{v}\rVert = 1, \quad v\in V_{T}\setminus\{v_{0},\ldots,v_{\kappa-1} \},\ n\in\mathbb{N};
	\end{equation*}
	which implies that $ \mu_{v} = \delta_{1} $ for $ v\in V_{T}\setminus\{v_{0},\ldots,v_{\kappa-1} \} $. From this and \eqref{FormRecursiveCauchyDual} we derive that for every $ m\in\mathbb{N}\cap[0,\kappa-1] $,
	\begin{equation*}
		\lVert S_{\pmb{\lambda}_{T}}^{\prime n+\kappa}e_{v_{m}}\rVert^{2} = D\lVert S_{\pmb{\lambda}_{T}}^{\prime n}e_{v_{m}}\rVert^{2} + C_{m}, \qquad n\in\mathbb{N}.
	\end{equation*}
	Hence,
	\begin{equation*}
		\int_{[0,1]}t^{n+\kappa}\ddd\mu_{v_{m}}(t) = D\int_{[0,1]}t^{n}\ddd\mu_{v_{m}}(t)+C_{m}, \qquad n\in\mathbb{N}.
	\end{equation*}
	Since $ \mu_{v_{m}} $ is regular, it follows that
	\begin{equation}
		\label{ProofFormRecursiveMeasure}
		(t^{\kappa}-D)\ddd\mu_{v_{m}}(t) = C_{m}\delta_{1}, \qquad m\in \mathbb{N}\cap[0,\kappa-1].
	\end{equation}
	Since $ \mathcal{R}\not=\varnothing $, by Corollary \ref{CorIsometricCompositionOperatorsOneCycle} we have that $ D<1 $. This implies that for all $ m\in\mathbb{N}\cap[0,\kappa-1] $, $ \mu_{v_{m}}([0,\sqrt[\kappa]{D})\cup (\sqrt[\kappa]{D},1)) = 0 $.
	Thus,
	\begin{equation}
		\label{ProofFormMeasureOnCycleTwoAtomic}
		\mu_{v_{m}} = (1-\alpha_{m})\delta_{\sqrt[\kappa]{D}}+\alpha_{m}\delta_{1}
	\end{equation} with some $ \alpha_{m} \in [0,1] $. In turn, for every $ m\in\mathbb{N}\cap[0,\kappa-1] $ we have
	\begin{align*}
		D(1-\alpha_{m})+\alpha_{m} &\stackrel{\eqref{ProofFormMeasureOnCycleTwoAtomic}}{=} \int_{[0,1]}t^{\kappa}\ddd\mu_{v_{m}} \\
		&\stackrel{\eqref{ProofFormRecursiveMeasure}}{=} D\mu_{v_{m}}([0,1])+C_{m} = D+C_{m}.
	\end{align*}
	The above equality implies that $ \alpha_{m} = \frac{C_{m}}{1-D} $.\\
	(ii) $ \Longrightarrow $ (i). From Theorem \ref{ThmMIsometricCompositionOperatorsOneCycle} it follows that
	\begin{equation*}
		\lVert S_{\pmb{\lambda}_{T}}^{n}e_{v}\rVert = 1, \quad v\in V_{T}\setminus\{v_{0},\ldots,v_{\kappa-1} \},\ n\in\mathbb{N},
	\end{equation*}
	which implies, by Lemma \ref{LemAdjointModulusCauchyDualFormulas}, that
	\begin{equation*}
		\lVert S_{\pmb{\lambda}_{T}}^{\prime n}e_{v}\rVert = 1, \quad v\in V_{T}\setminus\{v_{0},\ldots,v_{\kappa-1} \},\ n\in\mathbb{N}.
	\end{equation*}
	We check that
	\begin{align}
		\label{FormMomentsOnCycle}
		\lVert S_{\pmb{\lambda}_{T}}^{\prime n}e_{v_{m}}\rVert^{2} = \int_{[0,1]} t^{n}\ddd\mu_{v_{m}}(t), \qquad n\in\mathbb{N}, m\in \mathbb{N}\cap[0,\kappa-1].
	\end{align}
	Since \eqref{FormMomentsOnCycle} holds for $ n\in\mathbb{N}\cap[0,\kappa-1] $, it is enough to check that if \eqref{FormMomentsOnCycle} holds for some $ n\in\mathbb{N} $, then it also holds for $ n+\kappa $. By \eqref{FormRecursiveCauchyDual} we have
	\begin{align*}
		\lVert S_{\pmb{\lambda}_{T}}^{\prime n+\kappa}e_{v_{m}}\rVert^{2} &= D\lVert S_{\pmb{\lambda}_{T}}^{\prime n}e_{v_{m}}\rVert^{2}+C_{m}= D\int_{[0,1]}t^{n}\ddd\mu_{v_{m}}(t) + C_{m}\\
		&= D(1-\alpha_{m})\sqrt[\kappa]{D^{n}}+D\alpha_{m}+C_{m}\\
		&= (1-\alpha_{m})\sqrt[\kappa]{D^{n+\kappa}}+\frac{DC_{m}}{1-D}+C_{m}\\
		&= (1-\alpha_{m})\sqrt[\kappa]{D^{n+\kappa}}+\frac{C_{m}}{1-D}\\
		&= \int_{[0,1]}t^{n+\kappa}\ddd\mu_{v_{m}}(t).
	\end{align*}
	Using \cite[Theorems 3.1 and 2.2]{lambertSubnormalityWeightedShifts1976} we obtain subnormality of $ S_{\pmb{\lambda}_{T}}^{\prime} $.	
\end{proof}
In \cite{kosmiderMisometricCompositionOperators2021} the authors proved that the Cauchy dual of 2-isometric composition operator on graph with one cycle of length 1 and one branching point is subnormal. As a corollary, we get the same result in a more general version.
\begin{corollary}[cf. \mbox{\cite[Theorem 4.6]{kosmiderMisometricCompositionOperators2021}}]
	Assume that $ G_{T} $ is connected and satisfies Theorem \ref{ThmClassificationOfGraphs}.(ii) with additional assumptions that $ \mathcal{R}\not=\varnothing $ and $ \kappa=1 $. If $ S_{\pmb{\lambda}_{T}}\in \mathbf{B}(\ell^{2}(V_{T})) $ is 2-isometric, then $ S_{\pmb{\lambda}_{T}}^{\prime} $ is subnormal.
\end{corollary}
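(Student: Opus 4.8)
The plan is to reduce everything to Theorem \ref{ThmCauchyDualSubnormalityCondition}. Since $S_{\pmb{\lambda}_T}$ is assumed $2$-isometric and $\mathcal{R}\neq\varnothing$, that theorem asserts that $S_{\pmb{\lambda}_T}'$ is subnormal if and only if its condition (ii) holds, i.e. for every $m\in\mathbb{N}\cap[0,\kappa-1]$ the equalities $\lVert S_{\pmb{\lambda}_T}'^{\,n}e_{v_m}\rVert^2=\int_{[0,1]}t^n\ddd\mu_{v_m}(t)$ are satisfied for $n\in\mathbb{N}\cap[0,\kappa-1]$, with $\mu_{v_m}=(1-\alpha_m)\delta_{\sqrt[\kappa]{D}}+\alpha_m\delta_1$ the prescribed two-atomic measures. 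My whole task is therefore to verify condition (ii) in the special case $\kappa=1$.

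First I would exploit the degeneracy of the index set: when $\kappa=1$ we have $\mathbb{N}\cap[0,\kappa-1]=\{0\}$, so condition (ii) collapses to the single equation obtained by taking $m=0$ and $n=0$. That equation reads $\lVert e_{v_0}\rVert^2=\int_{[0,1]}\ddd\mu_{v_0}(t)=\mu_{v_0}([0,1])$. Its left-hand side is $1$, while its right-hand side is the total mass $(1-\alpha_0)+\alpha_0=1$ of $\mu_{v_0}$, so the equation holds automatically once $\mu_{v_0}$ is known to be a genuine positive (hence probability) measure. Thus the only substantive point to secure is that $\alpha_0\in[0,1]$.

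The verification of $\alpha_0\in[0,1]$ is the real content, and I would carry it out by specializing the constants of Theorem \ref{ThmCauchyDualSubnormalityCondition} to $\kappa=1$. There $\alpha_0=C_0/(1-D)$ with $D=\prod_{i=0}^{\kappa-1}c_{v_i}^2$ and $C_0=\sum_{i=0}^{\kappa-1}c_{v_{i\mod\kappa}}^2\Lambda_{0,i}^{\prime\,2}\sum_{R:\,i_R=i\mod\kappa}\lambda_{\omega_R}^2$. For $\kappa=1$ these reduce, using the empty-product convention $\Lambda_{0,0}^{\prime}=1$, to $D=c_{v_0}^2$ and $C_0=c_{v_0}^2\sum_{R\in\mathcal{R}}\lambda_{\omega_R}^2\ge 0$. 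Since $\mathcal{R}\neq\varnothing$, Corollary \ref{CorIsometricCompositionOperatorsOneCycle} shows $S_{\pmb{\lambda}_T}$ is not isometric, whence $D<1$ and $1-D>0$; this already gives $\alpha_0\ge 0$. For the upper bound I would invoke Lemma \ref{LemRecursiveFormulas}.(i), which for $\kappa=1$ gives $\lambda_{v_0}=\Lambda_{0,1}=\Lambda_{0,\kappa}=1$, and then Lemma \ref{LemAdjointModulusCauchyDualFormulas} to write $c_{v_0}=\lVert S_{\pmb{\lambda}_T}e_{v_0}\rVert^{-2}=(1+\sum_{R}\lambda_{\omega_R}^2)^{-1}$. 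A short computation yields $C_0+D=c_{v_0}^2(1+\sum_{R}\lambda_{\omega_R}^2)=c_{v_0}$, and since a $2$-isometry is an expansion, $c_{v_0}\in(0,1]$, so $C_0+D\le 1$ and therefore $\alpha_0\le 1$.

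I expect the main obstacle, mild as it is, to be precisely this inequality $\alpha_0\le 1$, equivalently $C_0+D\le 1$: everything else is the bookkeeping that renders the single surviving equation trivial once $\mu_{v_0}$ is a probability measure. The care needed there lies in correctly identifying $\lambda_{v_0}=1$ and rewriting $c_{v_0}$ explicitly before appealing to expansivity. With $\alpha_0\in[0,1]$ established, condition (ii) of Theorem \ref{ThmCauchyDualSubnormalityCondition} holds and the subnormality of $S_{\pmb{\lambda}_T}'$ follows at once.
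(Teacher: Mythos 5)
Your proposal is correct and takes the same route the paper intends: the paper states this corollary without proof, treating it as immediate from Theorem \ref{ThmCauchyDualSubnormalityCondition} once one notes that for $\kappa=1$ condition (ii) reduces to the single trivial identity $1=\mu_{v_0}([0,1])$. Your extra computation that $C_0+D=c_{v_0}\le 1$, hence $\alpha_0=c_{v_0}/(1+c_{v_0})\in[0,1]$ and $\mu_{v_0}$ is a genuine probability measure, is a worthwhile addition, since the positivity of $\mu_{v_0}$ is what the application of Lambert's criterion in the theorem actually rests on.
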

In \cite[Example 6.6]{anandSolutionCauchyDual2019} the authors gave an example of 2-isometry, for which the Cauchy dual is not subnormal. With the help of Theorems \ref{ThmMIsometricCompositionOperatorsOneCycle} and \ref{ThmCauchyDualSubnormalityCondition} we can give another example.
\begin{example}
	Let $ V_{T} = \{0,1,2\}\cup (\{0,1,2\}\times \mathbb{N}) $ and
	\begin{align*}
		E_{T} &= \{(0,1),(1,2),(2,0)\}\cup\{(i,(i,0))\!: i\in\{0,1,2\} \}\\ &\cup \{((i,j),(i,j+1))\!:i\in\{0,1,2\},j\in\mathbb{N}\}.
	\end{align*}
	The graph $ G_{T} = (V_{T},E_{T}) $ is exactly the graph defined at the end of Section \ref{SecMIsometricCompOperators} taken with $ \kappa = 3 $ and $ N_{0} = N_{1} = N_{2} = 1 $ (see Figure \ref{ImGraphWithCycleSpec}). Set $ \pmb{\lambda} = \{\lambda_{0},\lambda_{1},\lambda_{2} \}\cup\{\lambda_{i,j}\!: i\in\{0,1,2 \}, j\in\mathbb{N} \} $ as follows
	\begin{align*}
		\lambda_{1} = a, \ \lambda_{2} = 1, \ \lambda_{0} = \frac{1}{a},\\
		\lambda_{0,0} = \lambda_{1,0} = 1, \ \lambda_{2,0} = z,\\
		\lambda_{i,j} = 1, \qquad i\in\{0,1,2\}, j\in\mathbb{N}_{1},
	\end{align*}
	where $ a,z \in (0,\infty) $. It is easy to verify that $ S_{\pmb{\lambda}}\in\mathbf{B}(\ell^{2}(V_{T})) $; moreover, $ \lVert S_{\pmb{\lambda}}^{n}e_{v}\rVert = 1 $ for every $ v\in (\{0,1,2\})\times \mathbb{N} $ and $ n\in \mathbb{N} $, so the polynomial $ p_{v} = 1 $ satisfies \eqref{FormSquareNormPolynomial} with $ v\in (\{0,1,2\})\times \mathbb{N} $. The polynomial $ q\in \mathbb{R}_{0}[x] $ given by the formula \eqref{FormPolynomialP0MinusP0C} takes the form
	\begin{equation*}
		q(t) = 1+a^{2}+a^{2}z^{2}, \qquad t\in \mathbb{R}.
	\end{equation*}
	Hence, in view of Theorem \ref{ThmMIsometricCompositionOperatorsOneCycle}, $ S_{\pmb{\lambda}} $ is 2-isometric if and only if $ \rank \tilde{A}_{2,3} = \rank \tilde{B}_{2,3} $. The latter holds if and only if the polynomial
	\begin{equation*}
		p_{0}(t) = \frac{1}{3}\left( 1+a^{2}+a^{2}z^{2} \right)t+1, \qquad t\in \mathbb{R},
	\end{equation*}
	satisfies the equalities $ p_{0}(1) = \lVert S_{\pmb{\lambda}}e_{0}\rVert^{2} $ and $ p_{0}(2) = \lVert S_{\pmb{\lambda}}^{2}e_{0}\rVert^{2} $, which is equivalent to the following system of equations
	\begin{align}
		\label{ExampleForm2IsometricPower1}
		a^{2}+1 &= \frac{1}{3}\left( 1+a^{2}+a^{2}z^{2} \right)+1\\
		\label{ExampleForm2IsometricPower2}
		2a^{2}+1 &= \frac{2}{3}\left( 1+a^{2}+a^{2}z^{2} \right)+1
	\end{align}
	Assume $ S_{\pmb{\lambda}} $ is 2-isometric. By Theorem \ref{ThmCauchyDualSubnormalityCondition}, if $ S_{\pmb{\lambda}}^{\prime} $ is subnormal, then there exists $ \alpha\in (0,1) $ satisfying
	\begin{equation*}
		\begin{cases}
			\alpha\sqrt[3]{D}+1-\alpha = \lVert S_{\pmb{\lambda}}^{\prime} e_{0}\rVert^{2}\\
			\alpha\sqrt[3]{D^{2}} + 1-\alpha = \lVert S_{\pmb{\lambda}}^{\prime 2 } e_{0}\rVert^{2}
		\end{cases},
	\end{equation*}
	which implies that
	\begin{equation}
		\label{ExampleFormSystemForCDS}
		\frac{\lVert S_{\pmb{\lambda}}^{\prime} e_{0}\rVert^{2}-1}{\sqrt[3]{D}-1} = \frac{\lVert S_{\pmb{\lambda}}^{\prime 2} e_{0}\rVert^{2}-1}{\sqrt[3]{D^{2}}-1}.
	\end{equation}
	Note that
	\begin{align*}
		\lVert S_{\pmb{\lambda}}^{\prime} e_{0}\rVert^{2} &= \frac{1}{a^{2}+1}\\
		\lVert S_{\pmb{\lambda}}^{\prime 2 } e_{0}\rVert^{2} &= \frac{a^{2}}{2(a^{2}+1)^{2}}+\frac{1}{(a^{2}+1)^{2}}
	\end{align*}
	and
	\begin{equation*}
		D = \frac{1}{4(a^{2}+1)^{2}(a^{-2}+z^{2})^{2}}.
	\end{equation*}
	To obtain 2-isometric operator $ S_{\pmb{\lambda}} $, the Cauchy dual of which is not subnormal, it is enough to find $ a,z\in (0,\infty) $, which satisfy \eqref{ExampleForm2IsometricPower1} and \eqref{ExampleForm2IsometricPower2} and does not satisfy \eqref{ExampleFormSystemForCDS}; this holds if we take, for instance, $ a = \frac{2}{\sqrt{7}} $, $ z = \frac{1}{2} $.
\end{example}

	\section*{Statements and Declarations}
	\textbf{Funding.} The author of the publication received an incentive scholarship from the 
	funds of the program Excellence Initiative - Research University at the Jagiellonian 
	University in Kraków.\par
	\textbf{Competing interests.} The authors have no conflicts of interest to declare that are relevant to the content of this article.
	\section*{Acknowledgements}
	The author would like to thank Zenon Jabłoński for his helpful suggestions concerning the final version of this paper. We are also grateful to anonymous reviewer for the comments improving the exposition quality of the paper.
	\bibliographystyle{plain}
	\bibliography{references}

\end{document}